\setlist{noitemsep, topsep=0pt, parsep=0pt, partopsep=0pt}
\titlespacing*{\section}{0pt}{2pt}{1pt}
\titlespacing*{\subsection}{0pt}{1pt}{0.5pt}
\titlespacing*{\subsubsection}{0pt}{0.5pt}{0.25pt}
\newtheorem{theorem}{Theorem}[section]
\newtheorem{lemma}[theorem]{Lemma}
\newtheorem{corollary}[theorem]{Corollary}
\newtheorem{definition}{Definition}[section]
\newtheorem{remark}{Remark}[section]
\newenvironment{proof}{\noindent\textbf{Proof.}\quad}{\hfill$\square$}
\newcommand{\Dalpha}{{}^C D_t^\alpha}
\title{Mathematical Analysis and Modeling of Ebola Virus Dynamics via Optimal Control and Neural Network Paradigms
 \thanks{A preliminary version of this work is available on arXiv: \url{https://arxiv.org/abs/2511.06303} or via DOI: \url{https://doi.org/10.48550/arXiv.2511.06303}.}}
\author[1,*]{Noor Muhammad}
\author[2,3]{Md. Nur Alam}
\author[1,*]{Zhang Shiqing}
\author[4]{Ali Akgül}%aakgul1@upvnet.upv.es
\affil[1]{School of Mathematics, Sichuan University, Chengdu, China, 610064}
\affil[2]{School of Mathematical Sciences, Sunway University, Bandar Sunway, Petaling Jaya 47500, Selangor Darul Ehsan, Malaysia}
\affil[3]{Department of Mathematics, Pabna University of Science \& Technology, Pabna-6600, Bangladesh}
\affil[4]{Siirt University, Art and Science Faculty, Department of Mathematics, 56100 Siirt, Turkey}
\affil[*]{Corresponding authors: \\
\textsuperscript{1}noormuhammad@stu.scu.edu.cn, noormustaffa681@gmail.com \\
\textsuperscript{1}zhangshiqing@scu.edu.cn}
\date{}
\begin{document}
\maketitle
\begin{abstract}
Ebola virus disease is a severe, often fatal infectious disease transmitted through direct contact with infected bodily fluids and contaminated surfaces. This study addresses the serious need for integrated predictive and control frameworks by developing a unified approach that connects intelligent outbreak prediction systems with mathematical models of Ebola transmission. We propose a novel nonlinear fractional-order eight-compartment SVEI$_s$I$_a$HDR model that integrates control measures, personal protection, vaccination, treatment, and safe burial to analyze their effectiveness in reducing Ebola transmission and improving outbreak prediction. Mathematical validation establishes the model's boundedness, non-negativity, and well-posedness. We derive the basic reproduction number R$_0$ via the next-generation matrix, identify disease-free and endemic equilibrium, and conduct stability analyses. Sensitivity analysis reveals R$_0$'s primary sensitivity to transmission rate $\beta$ (+1.00), incubation period $\sigma$ (+0.6250), and infectivity of the dead $\eta_d$ (+0.4678). An optimal control framework minimizes disease management costs (4 dollars per infection). Numerical simulation by the RK45 method demonstrates combined treatment and safe burial controls 86.5\% of target morbidity and mortality, with safe burial being the most effective single measure. We develop a disease-informed neural network (DINN) achieving 99.87\% prediction accuracy ($R^2=0.9987$) with minimal error (NMSE $10^{-4}$ to $10^{-5}$). The DINN accurately recovers 13 of 17 transmission parameters with $<9\%$ error, establishing a new interpretability benchmark. This integrated framework provides public health officials with a practical tool to test interventions and predict outbreaks through model-data integration.
\end{abstract}
\noindent\textbf{Keywords:} Ebola virus disease, fractional-order compartmental model, basic reproduction number (\(\mathcal{R}_0\)), optimal control, sensitivity analysis, stability analysis, outbreak prediction, disease-informed neural network (DINN).
\section{Introduction}
A serious infectious disease, Ebola Virus Disease (EVD) is transmitted to people by reservoirs of animals such as fruit bats. When the Ebola virus transmits from animal reservoirs into human populations, an outbreak occurs \cite{troncoso2015ebola, Condra2016}. The illness has historically significant case fatality rates, with outbreaks ranging from 25\% to 90\% \cite{who2014ebola}. It was first reported in the Democratic Republic of the Congo in 1976; the name Ebola was given due to the Ebola River \cite{ndanguza2013statistical}. With 28,646 confirmed infections and 11,323 deaths recorded, the most catastrophic epidemic was in West Africa (2014–2016), with almost a 50\% death ratio \cite{barboza2015challenges}. Fever, weakness, diarrhea, and, in extreme situations, hemorrhagic signs are common in infected people. There is a major issue in that these viruses' incubation periods differ, sometimes 2 days and sometimes 21 days; infected individuals cannot spread the virus throughout the incubation phase \cite{legrand2007understanding}. Fruit bats are considered to be the main natural source of infection in humans, and non-human primates serve as end hosts, maintaining cycles of transmission that can result in major outbreaks \cite{tulu2017fruit, jones2005fruit}. Controlling measures are complicated, particularly frequent contact paths that boost the spread of EVD. Being in contact with infectious body fluids, such as blood or excretions, results in subsequent human-to-human transmission. Contact with deceased individuals during traditional burial practices is a crucial and widely recognized route of transmission that has been a significant facilitator of prior epidemics \cite{Alimodeling2021, richards2018estimation}. In addition, the virus might continue to spread indirectly by remaining on contaminated surfaces. Due to the absence of apparent symptoms, various demographic categories, in particular people who participate in burial customs and asymptomatic individuals, have a possibility to spread infection throughout communities. Therefore, it is difficult for standard mathematical models to capture the EVD dynamics of transmission, as they have to immediately identify these various and context-specific threats \cite{fisman2014ebola, rachah2015modelling}. Ebola transmission is rendered much more difficult by the potential for reinfection. After recovery, immunity may not be total or long-lasting because protective antibodies may gradually decline in survivors \cite{wauquier2009human, macintyre2016ebola}. Individuals who have previously been infected can test positive for the virus years later, indicating potential re-exposure and infection, according to data from previous outbreaks \cite{lamunu2004containing, sobarzo2013persistent}. Additionally, research on populations that have been exposed frequently has discovered people who display no symptoms but do not have detectable antibodies, underscoring the heterogeneity in immune response \cite{heffernan2005perspectives, leroy2000human}. Now, we focus on a few equivalent mathematical models of EVD that have already been proposed by other researchers. Fractional calculus has been applied to infectious disease modeling in a number of research investigations, displaying more consistency with observed outbreak data than traditional approaches \cite{Altaf2019, yadav2023}. Fractional calculus has long shown the potential to model memory effects and other non-local connections and thus provides a well-intentioned framework of study to overcome some of the noted deficiencies \cite{podlubny1999fractional}.  Unlike integer-order derivatives, which always adopt Markovian processes, fractional operators capture the distributed suspensions that are characteristic of epidemiological systems, such as extended incubation periods, waning immunity, and environmental reservoirs of pathogens \cite{Amuneef2021}. Evidence of progress in the modeling of infectious diseases for which data are available and for which fractional calculus applies exhibits greater concord with experimental indication, especially for diseases with long-range chronological correlations and power law decline in the efficiency of involvements \cite{singh2020, AtifA2022}. The non-Markovian nature of some fractional derivatives offers a better-suited mathematical framework for modeling the transmission of disease in and among structured populations \cite{Baishya2021}.The SI model, such as \cite{berge2017simple,wang2015ebola}, the SEIR model \cite{webb2016model,siettos2015modeling}, the SEIRD model \cite{chowell2014transmission}, and the SEIRHD model \cite{zhang2022fractional,gomes2014assessing, siriprapaiwan2018generalized, adu2023fractal}, where $S$, $E$, $I$, $H$, $R$, and $D$ represent susceptible, exposed, infectious, hospitalized, recovered, and dead/deceased compartments, respectively \cite{imran2017modeling}, separated the Dead/Deceased ($D$) compartment into two compartments, buried deceased ($R_D$) and unburied dead ($F$). Similarly, in their SIRPD model, where $P$ represents environmental contamination, \cite{berge2017simple} included EVD transmission through contaminated surroundings of deceased patients. While models like the SEIHFDBR in \cite{Adu2024} represent notable developments, they remain limited by their integer-order formulation. Understanding how to effectively control the spread of infection and facilitate decision-making during an epidemic is one of the applications of mathematical modeling. For example, \cite{yusuf2012optimal} presented an SIR model-based optimum control issue with treatment and immunization as possible control strategies. They looked into the most effective vaccination and therapy combinations to lower the cost of those preventive measures. In \cite{lashari2012optimal} investigated various diseases transmitted by insects that directly infect the host population, such as dengue fever, malaria, and the West Nile virus, using optimal control analysis. In order to examine the transmission of Ebola, \cite{seck2022optimal} designed a model. They evaluated the effects of various preventative measures such as vaccination, public awareness campaigns, early identification campaigns, improved hospital sanitation, separation of people with infections, and geographical constraints. Another field of research is the integration of machine learning with traditional epidemiological models. When handling noisy or incomplete surveillance information, physics-informed neural networks (PINNs) demonstrate particular potential for parameter estimation and prediction \cite{raissi2019physics, willard2020integrating, TangY2020}. A comprehensive analysis of the last five years' worth of literature, however, reveals enduring shortcomings and disparate study directions. Although fractional models have been created, they are frequently not integrated with intricate multi-pathway transmission designs specific to EVD, for instance. Likewise, machine learning applications often function independently of mathematical models, which limits their understanding and generalization, and optimal control formulations usually use simplified models that do not fully capture the complexity of transmission dynamics \cite{sarumi2020}. Nevertheless, despite these developments, there are still important research gaps in present literature. Firstly, there is a gap between the full representation of all recognized EVD transmission channels (symptomatic, asymptomatic, and burial customs) and fractional-order models, which account for memory effects. There is a methodological gap given that detailed transmission models generally employ integer-order approaches, whereas the majority of fractional models simplify transmission dynamics. Second, the application of optimal control theory to high-dimensional fractional systems that reflect EVD dynamics has been insufficient. One major challenge that has limited practical applicability is the mathematical complexity of determining optimal control policies for such systems, especially when including numerous intervention strategies with different costs and effectiveness. \cite{sharmi2017, kochanczyk2021}. Third, the majority of applications currently only combine biological process-based models with data-driven machine learning techniques. Hybrid frameworks have been proposed, but they typically lack the seamless combination needed for real-time parameter estimation and prediction during current outbreaks \cite{bertozzi2020modeling}. This study develops a novel computational framework with the particular objectives that follow in order to address these connected challenges. The practical objective is to provide public health decision-makers an efficient computational tool that can forecast disease paths with measured uncertainty, optimize allocation of resources for maximum impact, and simulate outbreak scenarios under different intervention strategies. The following is a list of this study's primary innovations and contributions:
\begin{itemize}[leftmargin=*]
\item We developed a novel nonlinear fractional-order eight-compartment mathematical model with memory effects and multiple control intervention evaluation of Ebola transmission dynamics. To prove the validity of the model, we presented boundedness, positivity, and well-posedness. We obtained the basic reproduction number$R_0$ by the next -generation matrix approach and the disease-free and endemic equilibrium points and then conducted thorough stability analysis of both states. Through sensitivity analysis, we determined that parameters such as the transmission rate, the incubation period, and the infectivity of the deceased had the greatest effects on $R_0$.
\item We established the optimal control model that would help to reduce the cost of managing the disease by considering four intervention measures that included personal protection, vaccination, treatment and safe burial. Our theoretical analysis by applying Pontryagin's Maximum Principle and numerical simulation using the RK45 method permitted identifying the cost-effective strategies, and the level of their effectiveness in terms of transmission and mortality reduction.
\item Motivated from physics-informed neural networks, we designed a new framework for disease-informed neural networks (DINN).  This computational approach achieves high prediction accuracy across all disease stages and demonstrates strong parameter recovery capability, providing an interpretable tool for outbreak forecasting and intervention assessment.
\end{itemize}
This work significantly improves both academic research and healthcare practice by successfully addressing the existing gap between theoretical modeling and actual application. It offers investigators a new methodological framework that shows how machine learning, optimal control theory, and fractional calculus may be successfully used for the scientific study of infectious diseases. 
This paper proceeds as follows. In Section 2, I discuss the elementary ideas of fractional calculus, which is key to developing the model. In Section 3, I develop an eight-compartment, fractional-order Ebola transmission model. Section 4 presents the complete mathematical analysis of the model, including proofs of existence and uniqueness of solutions, identification of the invariant region and attractivity, derivation of the basic reproduction number ($\mathcal{R}_0$), and both local and global stability analyses for the disease-free and endemic equilibrium points. Section 5 performs a global sensitivity analysis that attempts to quantity and prioritize the impact of key model parameters on the disease transmission dynamics. Section 6 establishes and solves optimal control problem through the Maximum Principle of Pontryagin founding the most effective intervention strategies.  In section 7, the formation of a disease-informed neural network (DINN) model along with the relevant numerical data is developed and fully analyzed. Lastly, in Section 8 the main findings are synthesized, the methodological limitations are critically discussed, and future research directions outlined.
\section{Preliminaries}
\label{sec:preliminaries}
This part sets out the basic principles of fractional calculus which is essential to the construction and analysis of our model.
\begin{definition}
\label{def:caputo}
Let $\alpha \in (0,1]$ and $u \in C^{1}([0,T])$. For any function $u$ which is continuously differentiable in the closure $[0,T]$ the Caputo fractional derivative of order $\alpha$ is defined in \cite{podlubny1999fractional} as follows.
\[
{}^{C}D_{t}^{\alpha} u(t) = \frac{1}{\Gamma(1-\alpha)} \int_{0}^{t} (t-\tau)^{-\alpha} u'(\tau)  d\tau.
\]
This operator is preferred for modeling dynamical systems as it accommodates standard initial conditions, and ${}^{C}D_{t}^{\alpha} u(t) \to u'(t)$ as $\alpha \to 1^{-}$, recovering the classical integer-order derivative.
\end{definition}
\begin{definition}
\label{def:fractional-stability}
The equilibrium point $y^*$ of the fractional-order system ${}^{C}D_{t}^{\alpha} y(t) = f(t, y(t))$ is said to be \emph{Mittag-Leffler stable} if
\[
\| y(t) - y^* \| \leq \left[m(y(0)-y^*) E_{\alpha, 1}(-\lambda t^{\alpha})\right]^{b}.
\]
where $\alpha \in (0,1)$, $\lambda \geq 0$, $b > 0$, $m(0)=0$, and $m(y) \geq 0$. Mittag-Leffler stability implies asymptotic stability \cite{li2010stability}.
\end{definition}
\begin{lemma}
\label{thm:stability-condition}
Let $\alpha \in (0,1]$. An equilibrium point $y^*$ of the autonomous system ${}^{C}D_{t}^{\alpha} y(t) = f(y(t))$ is asymptotically stable if all eigenvalues $\lambda_i$ of the Jacobian matrix $J = \partial f / \partial y |_{y^*}$ satisfy the condition \cite{li2010stability}.
\[
|\arg(\lambda_i)| > \frac{\alpha \pi}{2}, \quad \forall i.
\]
This condition generalizes the classical Routh-Hurwitz criterion to fractional-order systems.
\end{lemma}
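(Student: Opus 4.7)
The plan is to reduce the nonlinear system to its linearization about $y^*$ and then exploit the decay properties of the Mittag-Leffler function along the stable spectral sector. First, I would set $x(t) = y(t) - y^*$, so that $\Dalpha x(t) = J x(t) + g(x(t))$, where $g$ collects the higher-order terms and satisfies $\|g(x)\|/\|x\| \to 0$ as $x \to 0$. A similarity transformation putting $J$ into its Jordan canonical form decouples the linear part into scalar blocks of the type $\Dalpha z_i = \lambda_i z_i$ (with nilpotent coupling for repeated eigenvalues), which can be solved explicitly by the Laplace transform, yielding $z_i(t) = z_i(0) \mlf{\alpha}{1}(\lambda_i t^\alpha)$ plus Jordan corrections built from $\mlf{\alpha}{1+k}$.

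The second step would be to invoke the classical asymptotic expansion of the Mittag-Leffler function in the complex plane: for $|\arg(z)| > \alpha\pi/2$, one has $\mlf{\alpha}{1}(z) = -\sum_{k=1}^{N} z^{-k}/\Gamma(1-k\alpha) + O(|z|^{-N-1})$ as $|z|\to\infty$. Under the hypothesis $|\arg(\lambda_i)| > \alpha\pi/2$, the arguments $\arg(\lambda_i t^\alpha) = \arg(\lambda_i)$ remain in this algebraically decaying sector for every $t>0$, so each mode decays like $t^{-\alpha}$; the higher-order Jordan entries share the same decay region by differentiating term by term. Consequently, the linear fractional flow satisfies a bound of the form $\|\mlf{\alpha}{1}(J t^\alpha)\| \leq C(1 + t^\alpha)^{-1}$ for some $C>0$, establishing asymptotic stability of the linearization and, by Definition~\ref{def:fractional-stability}, its Mittag-Leffler stability.

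Finally, I would transfer this back to the nonlinear system via the fractional variation-of-constants formula
\[
x(t) = \mlf{\alpha}{1}(J t^\alpha)\,x(0) + \int_0^t (t-s)^{\alpha-1} \mlf{\alpha}{\alpha}(J(t-s)^\alpha)\, g(x(s))\, ds,
\]
combined with a singular-kernel Gronwall inequality. Choosing $\|x(0)\|$ sufficiently small so that $\|g(x)\| \leq \varepsilon \|x\|$ along the trajectory, the integral term stays subordinate to the decaying linear part, and one concludes $\|x(t)\| \to 0$ as $t \to \infty$. The main obstacle is this nonlinear convolution: unlike the exponential decay available in the integer-order case, the Mittag-Leffler kernel decays only algebraically, so the Gronwall estimate must be executed with the singular weight $(t-s)^{\alpha-1}$, and this is precisely where the strict inequality $|\arg(\lambda_i)| > \alpha\pi/2$ is used sharply, since boundary eigenvalues would produce only marginal, non-integrable decay and the perturbation argument would fail.
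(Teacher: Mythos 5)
The paper does not actually prove this lemma: it is stated in the Preliminaries as a quoted result and attributed to \cite{li2010stability} (it is the classical Matignon-type spectral criterion). So there is no in-paper argument to compare against; what you have done is reconstruct the standard literature proof, and your outline is essentially the right one. The linear part is correct: Jordan reduction, the explicit solution via $E_{\alpha,1}(\lambda t^\alpha)$, and the algebraic-decay asymptotic expansion valid in the sector $|\arg(z)|>\alpha\pi/2$ together give $\|E_{\alpha,1}(Jt^\alpha)\|\le C(1+t^\alpha)^{-1}$ and hence Mittag-Leffler stability of the linearization. Two refinements are worth recording. First, the Jordan corrections are not literally $E_{\alpha,1+k}$ terms but derivatives $\partial_\lambda^k E_{\alpha,1}(\lambda t^\alpha)$; they decay in the same sector, so nothing breaks, but the bookkeeping is different. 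Second, in the variation-of-constants kernel the relevant function is $E_{\alpha,\alpha}$, whose $z^{-1}$ asymptotic term vanishes because $1/\Gamma(\alpha-\alpha)=1/\Gamma(0)=0$; the leading decay is therefore $|z|^{-2}$, which is precisely what makes $\int_0^\infty s^{\alpha-1}\|E_{\alpha,\alpha}(Js^\alpha)\|\,ds$ finite and allows the perturbation term to be subordinated to the linear flow.

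The one place where your sketch is genuinely incomplete rather than merely compressed is the final nonlinear step. You correctly identify that the singular, only-algebraically-decaying kernel is the obstacle, but a direct Gronwall inequality with weight $(t-s)^{\alpha-1}$ yields boundedness of $\|x(t)\|$ for small data fairly easily, while the attractivity claim $x(t)\to 0$ does not follow from Gronwall alone; one must either split the convolution integral and run a limsup argument, or (as in the rigorous treatments of linearized asymptotic stability for Caputo systems) replace Gronwall by a Lyapunov--Perron fixed-point construction on a weighted space. If you intend this as a complete proof rather than a sketch, that step needs to be carried out explicitly; as written it is an accurate roadmap of the known proof, which is all the paper itself relies on via citation.
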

\section{Model formulation}
We formulate a Caputo fractional-order model ($\alpha \in (0,1]$) that incorporates the multi-pathway transmission, memory effects, and time-varying interventions of Ebola. The governing system is given as follows.
\begin{equation}
\begin{aligned}
\Dalpha S(t) &= \Lambda - \frac{c\beta S (I_s + \eta_a I_a + \eta_d D)}{N} + \omega V - (v+\mu)S, \\
\Dalpha V(t) &= vS - (1-\varepsilon)\frac{c\beta V (I_s + \eta_a I_a + \eta_d D)}{N} - (\mu+\omega) V, \\
\Dalpha E(t) &= \frac{c\beta [S + (1-\varepsilon)V] (I_s + \eta_a I_a + \eta_d D)}{N} - (\mu+\sigma)E, \\
\Dalpha I_s(t) &= p\sigma E - (\gamma_s + \delta_s + h_s + \mu)I_s, \\
\Dalpha I_a(t) &= (1-p)\sigma E - (\gamma_a + \mu)I_a, \\
\Dalpha H(t) &= h_s I_s - (\gamma_h + \delta_h +\mu )H, \\
\Dalpha D(t) &= \delta_s I_s + \delta_h H - \mu_d D, \\
\Dalpha R(t) &= \gamma_s I_s + \gamma_a I_a + \gamma_h H - \mu R.
\end{aligned}
\label{eq:ebola_model}
\end{equation}
The force of infection is given by
\[
\lambda = \frac{c\beta (I_s + \eta_a I_a + \eta_d D)}{N},
\]
and for notational simplicity, we define the model as
\begin{align*}
q_0 &= ( v+\mu), & q_1 &= (\mu+\omega), & q_2 &= (\mu+\sigma), \\
q_3 &= (\gamma_s + \delta_s + h_s + \mu), & q_4 &= (\gamma_a + \mu), & q_5 &= (\gamma_h + \delta_h + \mu), \\
q_6 &= \mu_d.
\end{align*}
The model parameters include natural mortality ($\mu$) and recruitment ($\Lambda$), vaccination rate $v(t)$ with waning immunity ($\omega$), and force of infection $\lambda(t)$ driving exposures. Exposed individuals ($E$) progress to infectious states at rate $\sigma$, splitting into symptomatic ($I_s$, proportion $p$) and asymptomatic ($I_a$, $1-p$) pathways. Symptomatic cases face hospitalization ($h_s$), disease-induced mortality ($\delta_s$), or recovery ($\gamma_s$). Asymptomatic and hospitalized individuals recover at rates $\gamma_a$ and $\gamma_h$, respectively, with hospital mortality ($\delta_h$). Deceased individuals ($D$) transmit infection until safe burial ($\mu_d$). Transmission coefficients $\beta$, $\eta_a$, and $\eta_d$ govern infectivity from symptomatic, asymptomatic, and post-mortem sources.The system is governed by,
\begin{equation}
\begin{aligned}
\Dalpha S(t) &= \Lambda - \lambda S + \omega V - q_0 S, \\
\Dalpha V(t) &= vS - (1-\varepsilon)\lambda V - q_1 V, \\
\Dalpha E(t) &= \lambda [S + (1-\varepsilon)V] - q_2 E, \\
\Dalpha I_s(t) &= p\sigma E - q_3 I_s, \\
\Dalpha I_a(t) &= (1-p)\sigma E - q_4 I_a, \\
\Dalpha H(t) &= h_s I_s - q_5 H, \\
\Dalpha D(t) &= \delta_s I_s + \delta_h H - \mu_d D, \\
\Dalpha R(t) &= \gamma_s I_s + \gamma_a I_a + \gamma_h H - \mu R.
\end{aligned}
\label{eq:ebola_system}
\end{equation}
The fractional-order system \eqref{eq:ebola_system} is analyzed with the following initial conditions:
\[
S(0) = S_0 \geq 0, \quad V(0) = V_0 \geq 0, \quad E(0) = E_0 \geq 0, \quad I_s(0) = I_{s_0} \geq 0,
\]
\[
I_a(0) = I_{a_0} \geq 0, \quad H(0) = H_0 \geq 0, \quad D(0) = D_0 \geq 0, \quad R(0) = R_0 \geq 0.
\]
In this Caputo fractional formulation, the system in \eqref{eq:ebola_system} captures the dynamics of Ebola transmission in an eight-dimensional frame. This construction involves ‘epidemic’ components such as susceptible ($S$), vaccinated ($V$), exposed ($E$), symptomatically ($I_s$) and asymptomatically ($I_a$) infectious, hospitalized ($H$), deceased ($D$), and recovery ($R$) stages, thus integrating the critical structure of underlying dynamics. Furthermore, the model participates in the fractional-order system of differential equations, thus inserting memory effects.
\begin{table}[!ht]
\centering
\caption{Parameters for the fractional-order Ebola model.}
\label{tab:model_parameters}
\footnotesize
\begin{tabular}{|p{1.4cm}|p{4.2cm}|c|c|c|}
\hline
\textbf{Symbol} & \textbf{Description} & \textbf{Value} & \textbf{Range} & \textbf{Source} \\
\hline
$\beta$   & Transmission rate & 0.287 & 0.2--0.4 & \cite{Xia2015} \\
\hline
$\eta_d$  & Deceased infectiousness & 0.734 & 0.5--0.9 & \cite{Xia2015} \\
\hline
$\eta_a$  & Asymptomatic infectiousness & 0.523 & 0.3--0.7 & Assumed (Fitted) \\
\hline
$\sigma$  & Incubation rate (day$^{-1}$) & 0.094 & 0.05--0.15 & \cite{Goeijenbeir2014} \\
\hline
$p$       & Symptomatic proportion & 0.712 & 0.5--0.8 & \cite{Goeijenbeir2014} \\
\hline
$\gamma_s$& Symptomatic recovery (day$^{-1}$) & 0.068 & 0.05--0.1 & \cite{berge2017simple} \\
\hline
$\gamma_a$& Asymptomatic recovery (day$^{-1}$) & 0.089 & 0.07--0.12 & \cite{berge2017simple} \\
\hline
$\delta_s$& Symptomatic death (day$^{-1}$) & 0.103 & 0.05--0.15 & \cite{Chowell2014} \\
\hline
$\delta_h$& Hospitalized death (day$^{-1}$) & 0.067 & 0.03--0.09 & \cite{Chowell2014} \\
\hline
$h_s$     & Hospitalization rate & 0.312 & 0.2--0.4 & \cite{Lau2017} \\
\hline
$\Lambda$ & Recruitment rate & 100 & 100--1000 & \cite{Berg2017} \\
\hline
$\mu$     & Natural mortality (day$^{-1}$) & $3.5\times10^{-5}$ & -- & \cite{Berg2017} \\
\hline
$v(t)$    & Vaccination rate & -- & 0.005--0.08 & Assumed (Control) \\
\hline
$\varepsilon$ & Vaccine efficacy & -- & 0.85--0.95 & Assumed (Control) \\
\hline
$\omega$  & Waning immunity (day$^{-1}$) & -- & 0.0027--0.0037 & Assumed \\
\hline
$\alpha$  & Fractional order & 0.85 & 0.75--0.95 & Assumed (Model) \\
\hline
\end{tabular}
\end{table}
\begin{figure}[!ht]
\centering
\begin{tikzpicture}[
     mycircle/.style={circle, draw=black, thick, minimum size=1.8cm, fill opacity=1., 
                    text=black, font=\bfseries\itshape\LARGE}, 
    arrow/.style={->, >=stealth, thick, shorten >=2pt, shorten <=2pt},
    dashedarrow/.style={red, dashed, thick, ->, >=stealth, shorten >=2pt, shorten <=2pt},
    label/.style={font=\small, midway}
]
\node[mycircle, fill=green!100] (S) at (0,5) {$\mathbb{S}$};
\node[mycircle, fill=yellow!100] (V) at (-1.5,-1.2) {$\mathbb{V}$};
\node[mycircle, fill=orange!100] (E) at (5,4) {$\mathbb{E}$};
\node[mycircle, fill=red!80] (Is) at (10,5) {$\mathbb{I}_s$};
\node[mycircle, fill=red!80] (Ia) at (2,-5) {$\mathbb{I}_a$};
\node[mycircle, fill=blue!60] (H) at (12,0) {$\mathbb{H}$};
\node[mycircle, fill=gray!95] (D) at (9.5,-5) {$\mathbb{D}$};
\node[mycircle, fill=green!100] (R) at (5,0) {$\mathbb{R}$};
% Recruitment
\draw[<-] (S) -- ++(-1.5,0) node[left, font=\small] {$\Lambda$};
% Transmission arrows
\draw[dashedarrow] (S) to[bend left=25] node[label, below] {$\lambda S$} (E);
\draw[dashedarrow] (S) to[bend left=15] node[label, right, pos=.4] {$vS$} (V);
\draw[dashedarrow] (V) to[bend right=10] node[label, left, pos=0.8] {$(1-\varepsilon)\lambda V$} (E);
\draw[dashedarrow] (V) to[bend left=25] node[label, right, pos=0.3] {$\omega V$} (S);
% Progression arrows
\draw[dashedarrow] (E)  to[bend left=25] node[label, below, pos=0.7] {$p\sigma E$} (Is);
\draw[dashedarrow] (E) to[bend right=15] node[label, right, pos=0.2] {$(1-p)\sigma E$} (Ia);
% Recovery arrows
\draw[dashedarrow] (Is) to[bend right=10] node[label, right, pos=0.7] {$\gamma_s I_s$} (R);
\draw[dashedarrow] (Ia) to[bend right=10] node[label, right, pos=0.7] {$\gamma_a I_a$} (R);
\draw[dashedarrow] (H) to[bend right=10] node[label, above, pos=0.7] {$\gamma_h H$} (R);
% Hospitalization and death arrows
\draw[dashedarrow] (Is)  to[bend left=25] node[label, left, pos=0.3] {$h_s I_s$} (H);
\draw[dashedarrow] (Is) to[bend left=20] node[label, right, pos=0.7] {$\delta_s I_s$} (D);
\draw[dashedarrow] (H)  to[bend left=25] node[label, left, pos=0.7] {$\delta_h H$} (D);
\draw[dashedarrow] (Ia) to[bend left=15] node[label, above, pos=0.8] {$\delta_a I_a$} (D);
\draw[dashedarrow] (Ia) -- node[label, left, pos=0.8] {$h_a I_a$} (H);
% Natural mortality arrows (dashed)
\draw[dashedarrow] (S) -- ++(1.5,-1.5) node[below, font=\small] {$\mu S$};
\draw[dashedarrow] (V) -- ++(1.5,0) node[below, font=\small] {$\mu V$};
\draw[dashedarrow] (E) to[bend left=5] ++(1,3) node[above, font=\small] {$\mu E$};
\draw[dashedarrow] (Is) -- ++(-1,-1.5) node[below, font=\small] {$\mu I_s$};
\draw[dashedarrow] (Ia) to[bend left=20] ++(-2.7,2.3) node[above, font=\small] {$\mu I_a$};
\draw[dashedarrow] (H) -- ++(-0.5,1.5) node[above, font=\small] {$\mu H$};
\draw[dashedarrow] (D) -- ++(0,1.5) node[above, font=\small] {$\mu_d D$};
\draw[dashedarrow] (R) -- ++(1.5,-1.5) node[below, font=\small] {$\mu R$};
\end{tikzpicture}
\caption{Transmission architecture of the fractional Ebola model.}
\label{fig:ebola_diagram}
\end{figure}
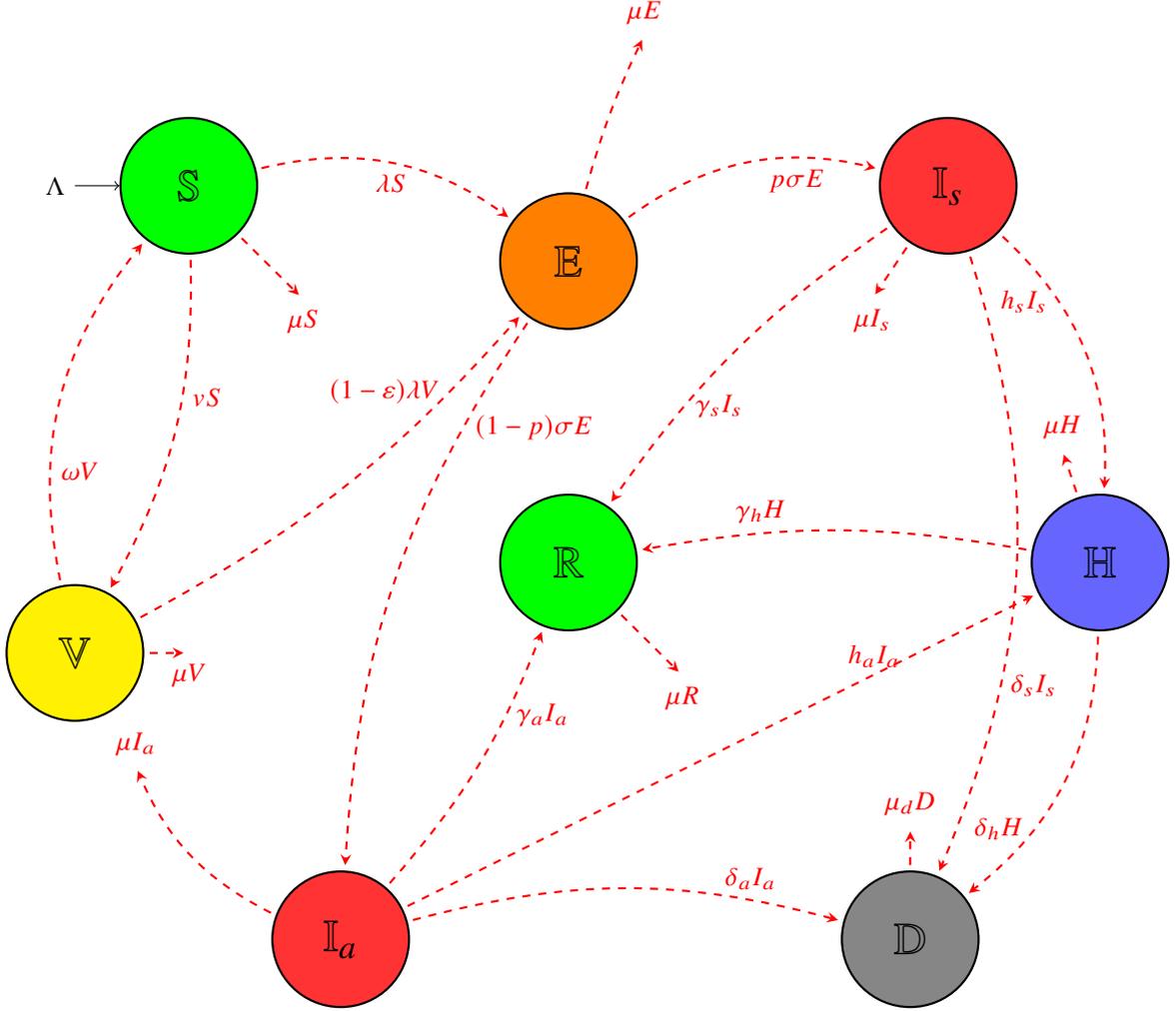
\section{Mathematical Analysis}
\subsection{Well-Posedness of the Model}
We establish the mathematical well-posedness of the fractional-order Ebola system through existence and uniqueness analysis.
\begin{theorem}
Given any $\alpha \in (0,1]$, and initial conditions ae $\mathbf{X}_0 = (S_0, V_0, E_0, I_{s0}, I_{a0}, H_0, D_0, R_0) \in \mathbb{R}_+^8$, the fractional-order model \eqref{eq:ebola_system} has the unique solution $\mathbf{X}(t) = (S(t), V(t), E(t), I_s(t), I_a(t), H(t), D(t), R(t))$ for all $t \geq 0$.
\end{theorem}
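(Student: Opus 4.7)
The plan is to recast the system \eqref{eq:ebola_system} as a single vector-valued Caputo fractional initial value problem
\[
{}^{C}D_{t}^{\alpha}\mathbf{X}(t) = F(\mathbf{X}(t)), \qquad \mathbf{X}(0)=\mathbf{X}_0,
\]
where $F:\mathbb{R}^{8}\to\mathbb{R}^{8}$ collects the eight right-hand sides, and then invoke the standard Banach-type existence/uniqueness result for Caputo systems (see \cite{podlubny1999fractional,li2010stability}): if $F$ is continuous and locally Lipschitz on a positively invariant closed set $\Omega\subset\mathbb{R}^{8}$ containing $\mathbf{X}_0$, then the equivalent Volterra integral formulation
\[
\mathbf{X}(t) = \mathbf{X}_0 + \frac{1}{\Gamma(\alpha)}\int_0^t (t-\tau)^{\alpha-1} F(\mathbf{X}(\tau))\,d\tau
\]
admits a unique continuous solution on $[0,\infty)$.

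First I would build the feasible region. Adding the eight equations and using $N=S+V+E+I_s+I_a+H+D+R$ together with the nonnegativity of all transfer terms gives
\[
{}^{C}D_{t}^{\alpha} N(t) \le \Lambda - \mu N(t) + (\mu-\mu_d)D(t),
\]
so after noting $\mu_d\ge \mu$ (burial removes the deceased faster than natural turnover) one obtains ${}^{C}D_{t}^{\alpha} N \le \Lambda - \mu N$. A standard comparison argument based on the Mittag-Leffler representation of the scalar linear Caputo equation then yields $N(t)\le\max\{N(0),\Lambda/\mu\}=:M$. Combined with the observation that each coordinate equation has the form ${}^{C}D_{t}^{\alpha} X_i = (\text{nonnegative inflow}) - X_i(\text{bounded outflow rate})$, so that $X_i=0$ implies ${}^{C}D_{t}^{\alpha} X_i\ge 0$, this identifies the positively invariant, compact set
\[
\Omega = \Bigl\{\mathbf{X}\in\mathbb{R}_+^{8}\,:\, S+V+E+I_s+I_a+H+D+R \le M\Bigr\},
\]
on which the total population is uniformly bounded and strictly positive once $\mathbf{X}_0\neq 0$.

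Second I would establish the Lipschitz estimate on $\Omega$. Every term in $F$ is either linear in the state or is a product of two state components divided by $N$. Since $N$ is bounded above by $M$ and, by the inflow $\Lambda>0$, one may bound $N$ from below by a positive constant $N_{\min}>0$ on $\Omega$ (or, equivalently, first work on an initial interval and then extend), the force of infection $\lambda = c\beta(I_s+\eta_a I_a+\eta_d D)/N$ is a smooth function with bounded partial derivatives on $\Omega$. A direct application of the mean value inequality to each component of $F$ then furnishes a constant $L>0$ with
\[
\|F(\mathbf{X}) - F(\mathbf{Y})\|_1 \le L\,\|\mathbf{X}-\mathbf{Y}\|_1, \qquad \mathbf{X},\mathbf{Y}\in\Omega.
\]

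Finally, local existence and uniqueness follow by applying the contraction mapping theorem to the Volterra operator on the Banach space $C([0,T];\Omega)$ for $T$ small enough that $L T^{\alpha}/\Gamma(\alpha+1)<1$; global existence to $[0,\infty)$ is obtained by iterating this construction, using the a priori bound $\mathbf{X}(t)\in\Omega$ to prevent finite-time blow-up. I expect the main obstacle to be the bilinear term $\lambda S$ together with $\lambda V$: the division by $N$ is not globally Lipschitz on $\mathbb{R}_+^{8}$, so the whole argument hinges on first securing the invariant region $\Omega$ and the uniform lower bound $N\ge N_{\min}>0$ there. Once that is in place, the remaining algebra is routine.
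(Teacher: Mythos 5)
Your proposal follows essentially the same route as the paper: rewrite the system as the equivalent Volterra integral equation, obtain a Lipschitz bound for the right-hand side, apply Banach's contraction principle on a short interval, and extend globally by continuation using an a priori bound on the state. The one substantive difference is that you establish the invariant, compact region and a positive lower bound on $N$ \emph{before} claiming the Lipschitz property — which is actually more careful than the paper's proof, since the paper simply asserts Lipschitz continuity ``from bounded parameters and finite state variables'' without addressing the $1/N$ factor in the force of infection (the paper instead handles invariance and boundedness in the following theorem, working with the living population $N_L$ rather than assuming $\mu_d \geq \mu$ as you do).
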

\begin{proof}
Define the state vector $\mathbf{X}(t) = (S, V, E, I_s, I_a, H, D, R)^T$ and express the system as
\[
\Dalpha \mathbf{X}(t) = \mathbf{F}(t, \mathbf{X}(t)), \quad \mathbf{X}(0) = \mathbf{X}_0.
\]
The equivalent Volterra integral formulation is
\begin{equation}
\mathbf{X}(t) = \mathbf{X}_0 + \frac{1}{\Gamma(\alpha)} \int_0^t (t-s)^{\alpha-1} \mathbf{F}(s, \mathbf{X}(s)) ds, \label{eq:volterra}
\end{equation}
Assume the operator $\mathcal{T}: C([0,T], \mathbb{R}^8) \to C([0,T], \mathbb{R}^8)$ defined by
\[
(\mathcal{T}\mathbf{X})(t) = \mathbf{X}_0 + \frac{1}{\Gamma(\alpha)} \int_0^t (t-s)^{\alpha-1} \mathbf{F}(s, \mathbf{X}(s))ds,
\]
For $\mathbf{X}, \mathbf{Y} \in C([0,T], \mathbb{R}^8)$ with $\|\mathbf{X}\| = \sup_{t \in [0,T]} \|\mathbf{X}(t)\|_\infty$, the Lipschitz continuity of $\mathbf{F}$ follows from bounded parameters and finite state variables over compact intervals. Specifically, there exists $L > 0$ such that
\begin{equation}
\|\mathbf{F}(t,\mathbf{X}) - \mathbf{F}(t,\mathbf{Y})\|_\infty \leq L \|\mathbf{X} - \mathbf{Y}\|_\infty,
\label{eq:lipschitz_condition}
\end{equation}
The contraction estimate provides that
\begin{equation}
\|\mathcal{T}\mathbf{X} - \mathcal{T}\mathbf{Y}\| \leq \frac{L \tau^\alpha}{\Gamma(\alpha+1)} \|\mathbf{X} - \mathbf{Y}\|.
\end{equation}
Selecting $\tau > 0$ such that $\frac{L \tau^\alpha}{\Gamma(\alpha+1)} < 1$ ensures $\mathcal{T}$ is a contraction on $C([0,\tau], \mathbb{R}^8)$. Banach's fixed point theorem guarantees a unique local solution $\mathbf{X}^* \in C([0,\tau], \mathbb{R}^8)$. Global extension to $[0,\infty)$ follows from \eqref{eq:lipschitz_condition} the linear growth condition$\|\mathbf{F}(t,\mathbf{X})\|_\infty \leq M(1 + \|\mathbf{X}\|_\infty)$ for some $M > 0$, this guarantees that solutions are always bounded. It is demonstrated that the solution exists and remains unique globally using an iterative continuation technique.
\end{proof}
\subsection{Invariant Region and Attractivity}
The fractional-order Ebola model \eqref{eq:ebola_system} evolves within the biologically feasible region is
\begin{equation} \label{eq:feasible_region}
\Omega = \left\{ (S, V, E, I_s, I_a, H, D, R) \in \mathbb{R}^8_{\geq 0} :\quad D \leq \frac{(\delta_s + \delta_h)}{\mu_d}\left(\frac{\Lambda}{\mu} + \epsilon\right), N_L \leq (\frac{\Lambda}{\mu} + \epsilon) \right\},
\end{equation}
for $\epsilon > 0$, where $N_L = S + V + E + I_s + I_a + H + R$ denotes the living population.
\begin{theorem}
The region $\Omega$ is defined as
\begin{enumerate}
    \item[(i)] Positively invariant under model \eqref{eq:ebola_system},
    \item[(ii)] Globally attractive in $\mathbb{R}^{8}_{\geq 0}$.
\end{enumerate}
\end{theorem}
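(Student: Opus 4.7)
The plan is to decompose the proof into three parallel pieces: non-negativity of each compartment (so trajectories stay in $\mathbb{R}^8_{\geq 0}$), the upper bound on the living total $N_L$, and the upper bound on the deceased compartment $D$. Part (i) (positive invariance) then follows by combining these three bounds with the initial hypothesis $\mathbf{X}_0\in\Omega$, and part (ii) (global attractivity) follows from the same bounds applied as $t\to\infty$ to any trajectory starting merely in $\mathbb{R}^8_{\geq 0}$.

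First I would establish non-negativity. For each state variable $X_i\in\{S,V,E,I_s,I_a,H,D,R\}$, I compute $\Dalpha X_i$ restricted to the boundary hyperplane $\{X_i=0\}$; because every negative term in each equation carries the factor $X_i$, one checks directly that $\Dalpha X_i\bigl|_{X_i=0}\geq 0$ whenever the remaining states are non-negative (e.g.\ $\Dalpha S|_{S=0}=\Lambda+\omega V\geq 0$, $\Dalpha D|_{D=0}=\delta_s I_s+\delta_h H\geq 0$, etc.). Invoking the standard Caputo-positivity lemma (if a continuous function satisfies $X_i(0)\geq 0$ and $\Dalpha X_i\geq 0$ whenever $X_i=0$, then $X_i(t)\geq 0$ for all $t\geq 0$) gives $\mathbf{X}(t)\in\mathbb{R}^8_{\geq 0}$ for all $t$.

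Next, summing the first, second, third, fourth, fifth, sixth and eighth equations of \eqref{eq:ebola_system} yields
\begin{equation*}
\Dalpha N_L(t) = \Lambda - \mu N_L - \delta_s I_s - \delta_h H \leq \Lambda - \mu N_L.
\end{equation*}
Applying the fractional comparison principle together with the Mittag-Leffler representation of the scalar linear Caputo equation gives
\begin{equation*}
N_L(t)\leq \tfrac{\Lambda}{\mu} + \bigl(N_L(0)-\tfrac{\Lambda}{\mu}\bigr)E_{\alpha,1}(-\mu t^\alpha),
\end{equation*}
so $N_L(t)\leq \Lambda/\mu + \epsilon$ whenever this holds initially, and in all cases $\limsup_{t\to\infty}N_L(t)\leq \Lambda/\mu$. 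For the deceased bound, since $I_s,H\leq N_L$ on the non-negative orthant, the seventh equation yields $\Dalpha D\leq(\delta_s+\delta_h)N_L-\mu_d D$; combined with the bound on $N_L$ just established, the same fractional comparison argument produces $D(t)\leq \tfrac{\delta_s+\delta_h}{\mu_d}\bigl(\tfrac{\Lambda}{\mu}+\epsilon\bigr)$ whenever this holds at $t=0$, and asymptotically for arbitrary non-negative initial data. Together these three ingredients give (i); for (ii), the same Mittag-Leffler estimates show that any trajectory in $\mathbb{R}^8_{\geq 0}$ enters $\Omega$ after finite time and never leaves, proving global attractivity.

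The main obstacle is the rigorous use of the fractional comparison principle: unlike the integer-order case one cannot simply differentiate $N_L$ and integrate a linear ODE, and the standard chain/product rules fail for Caputo derivatives. I would therefore phrase the argument via the scalar auxiliary problem $\Dalpha u=\Lambda-\mu u$, whose explicit Mittag-Leffler solution dominates $N_L$ by a Laplace-transform-based comparison lemma for Caputo systems; an analogous auxiliary equation with forcing $(\delta_s+\delta_h)(\Lambda/\mu+\epsilon)$ handles $D$. A secondary technical point is ensuring the $D$ bound is propagated consistently with the (possibly slower) decay of $N_L$ to $\Lambda/\mu$, which is why the region is stated with the slack parameter $\epsilon>0$.
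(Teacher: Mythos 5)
Your proposal is correct and follows essentially the same route as the paper: boundary-flux non-negativity with the Caputo positivity lemma for the orthant, the differential inequality $\Dalpha N_L \leq \Lambda - \mu N_L$ resolved via Mittag--Leffler asymptotics for the living population, and the analogous inequality for $D$. Your phrasing via an explicit fractional comparison principle (rather than the paper's direct Laplace transform of the inequality) is a presentational refinement, not a different argument, and your closed form $\tfrac{\Lambda}{\mu} + \bigl(N_L(0)-\tfrac{\Lambda}{\mu}\bigr)E_{\alpha,1}(-\mu t^\alpha)$ is equivalent to the paper's two-term Mittag--Leffler expression.
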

\begin{proof}
Applying the fractional positivity lemma from \cite{podlubny1999fractional}, we verify boundary behavior as,
\begin{equation}
\begin{aligned}
&\Dalpha S|_{S=0} = \Lambda + \omega V \geq 0, \quad
\Dalpha V|_{V=0} = vS \geq 0, \\
&\Dalpha E|_{E=0} = \frac{c\beta[S + (1-\varepsilon)V](I_s + \eta_a I_a + \eta_d D)}{N} \geq 0, \\
&\Dalpha I_s|_{I_s=0} = p\sigma E \geq 0, \quad
\Dalpha I_a|_{I_a=0} = (1-p)\sigma E \geq 0, \\
&\Dalpha H|_{H=0} = h_s I_s \geq 0, \quad
\Dalpha D|_{D=0} = \delta_s I_s + \delta_h H \geq 0, \\
&\Dalpha R|_{R=0} = \gamma_s I_s + \gamma_a I_a + \gamma_h H \geq 0.
\end{aligned}
\label{eq:non_negativity_conditions}
\end{equation}
Non-negativity of all boundary fluxes ensures $\mathbb{R}_+^8$ is invariant.

The total living population, defined as $N_L(t) = S(t) + V(t) + E(t) + I_s(t) + I_a(t) + H(t) + R(t)$, evolves according to
\begin{equation}
\Dalpha N_L = \Lambda - \mu N_L - (\delta_s I_s + \delta_h H) \leq \Lambda - \mu N_L,
\label{eq:N_L_dynamics}
\end{equation}
Applying the Laplace transform to the inequality $\Dalpha N_L \leq \Lambda - \mu N_L$ gives
\begin{equation}
N_L(s) \leq \frac{\Lambda}{s(s^\alpha + \mu)} + N_L(0) \frac{s^{\alpha-1}}{s^\alpha + \mu},
\label{eq:boundedness_laplace}
\end{equation}
Applying the inverse Laplace transform yields
\[
N_L(t) \leq N_L(0) E_{\alpha,1}(-\mu t^\alpha) + \Lambda t^\alpha E_{\alpha,\alpha+1}(-\mu t^\alpha).
\]
Taking the limit as \(t \to \infty\), we use the asymptotic behaviors \(E_{\alpha,1}(-\mu t^\alpha) \to 0\) and \(t^\alpha E_{\alpha,\alpha+1}(-\mu t^\alpha) \to 1/\mu\),

which gives the bound
\[
\limsup_{t \to \infty} N_L(t) \leq \frac{\Lambda}{\mu}.
\]

For deceased individuals, we derive from the differential inequality
\[
\Dalpha D \leq (\delta_s + \delta_h)\left(\frac{\Lambda}{\mu} + \epsilon\right) - \mu_d D,
\]

and consequently obtain
\[
\limsup_{t \to \infty} D(t) \leq \frac{\delta_s + \delta_h}{\mu_d}\left(\frac{\Lambda}{\mu} + \epsilon\right).
\]
Thus, all trajectories are eventually absorbed into $\Omega$.
\end{proof}
\subsection{Ebola-Free Equilibrium}
The state in which the population is completely free of Ebola virus infection is known as the disease-free equilibrium (DFE). At this equilibrium, all infected compartments vanish, meaning
\[
E = I_s = I_a = H = D = 0.
\]
The system \eqref{eq:ebola_system} then reduces to
\begin{align*}
0 &= \Lambda - q_0 S + \omega V, \\
0 &= v S - q_1 V, \\
0 &= -\mu R.
\end{align*}
From the third equation, we obtain $R^* = 0$. The first two equations form the linear system
\begin{align*}
q_0 S - \omega V &= \Lambda, \\
-v S + q_1 V &= 0.
\end{align*}
The coefficient matrix
\[
A = \begin{pmatrix}
q_0 & -\omega \\
-v & q_1
\end{pmatrix}
\]
has determinant
\[
\det(A) = q_0 q_1 - \omega v = \mu(q_1 + v) > 0,
\]
which guarantees a unique solution. Applying Cramer's rule yields
\[
S^* = \frac{\Lambda q_1}{\mu(q_1 + v)}, \quad V^* = \frac{\Lambda v}{\mu(q_1 + v)}.
\]

Thus, the disease-free equilibrium is given by
\begin{equation}
\mathcal{E}_0 = \left(\frac{\Lambda q_1}{\mu(q_1 + v)}, \frac{\Lambda v}{\mu(q_1 + v)}, 0, 0, 0, 0, 0, 0\right).
\label{eq:DFE}
\end{equation}
This biologically consistent DFE serves as the basis for stability analysis and the derivation of the basic reproduction number.
\subsection{Basic Reproduction Number \texorpdfstring{$\mathcal{R}_0$}{R0}}
The basic reproduction number $\mathcal{R}_0$ is computed using the next-generation matrix method \cite{van2002reproduction}. Considering the infected subsystem $X = (E, I_s, I_a, H, D)^T$, we define the new infection matrix $F$ and the transition matrix $V$ as
\[
F = \begin{pmatrix}
0 & \beta\Psi & \beta\eta_a\Psi & 0 & \beta\eta_d\Psi \\
0 & 0 & 0 & 0 & 0 \\
0 & 0 & 0 & 0 & 0 \\
0 & 0 & 0 & 0 & 0 \\
0 & 0 & 0 & 0 & 0
\end{pmatrix}, \qquad
V = \begin{pmatrix}
q_2 & 0 & 0 & 0 & 0 \\
-p\sigma & q_3 & 0 & 0 & 0 \\
-(1-p)\sigma & 0 & q_4 & 0 & 0 \\
0 & -h_s & 0 & q_5 & 0 \\
0 & -\delta_s & 0 & -\delta_h & q_6
\end{pmatrix},
\]
where $\Psi = \frac{S^* + (1-\varepsilon)V^*}{N^*}$ and $N^* = \frac{\Lambda}{\mu}$.
The inverse of the transition matrix is
\[
V^{-1} = \begin{pmatrix}
\frac{1}{q_2} & 0 & 0 & 0 & 0 \\
\frac{p\sigma}{q_2 q_3} & \frac{1}{q_3} & 0 & 0 & 0 \\
\frac{(1-p)\sigma}{q_2 q_4} & 0 & \frac{1}{q_4} & 0 & 0 \\
\frac{p\sigma h_s}{q_2 q_3 q_5} & \frac{h_s}{q_3 q_5} & 0 & \frac{1}{q_5} & 0 \\
\frac{p\sigma(\delta_s q_5 + \delta_h h_s)}{q_2 q_3 q_5 q_6} & \frac{\delta_s q_5 + \delta_h h_s}{q_3 q_5 q_6} & 0 & \frac{\delta_h}{q_5 q_6} & \frac{1}{q_6}
\end{pmatrix}.
\]
The next-generation matrix is $K = FV^{-1}$, and its spectral radius gives the basic reproduction number
\begin{equation}
\mathcal{R}_0 = \beta\Psi\left[\frac{p\sigma}{q_2}\left(\frac{1}{q_3} + \frac{\eta_d(\delta_s q_5 + \delta_h h_s)}{q_3 q_5 q_6}\right) + \frac{\eta_a(1-p)\sigma}{q_2 q_4}\right].
\label{eq:R0_general}
\end{equation}

Substituting the expression for $\Psi$ yields the explicit form
\begin{equation}
\mathcal{R}_0 = \frac{\beta\sigma[S^* + (1-\varepsilon)V^*]}{(\Lambda/\mu) q_2}\left[\frac{p}{q_3}\left(1 + \frac{\eta_d(\delta_s q_5 + \delta_h h_s)}{q_5 q_6}\right) + \frac{\eta_a(1-p)}{q_4}\right].
\label{eq:R0_explicit}
\end{equation}
This threshold quantity governs the disease invasion and extinction dynamics.
\begin{figure}[!ht]
\centering
\begin{subfigure}{0.45\textwidth}
\centering
\includegraphics[width=\textwidth]{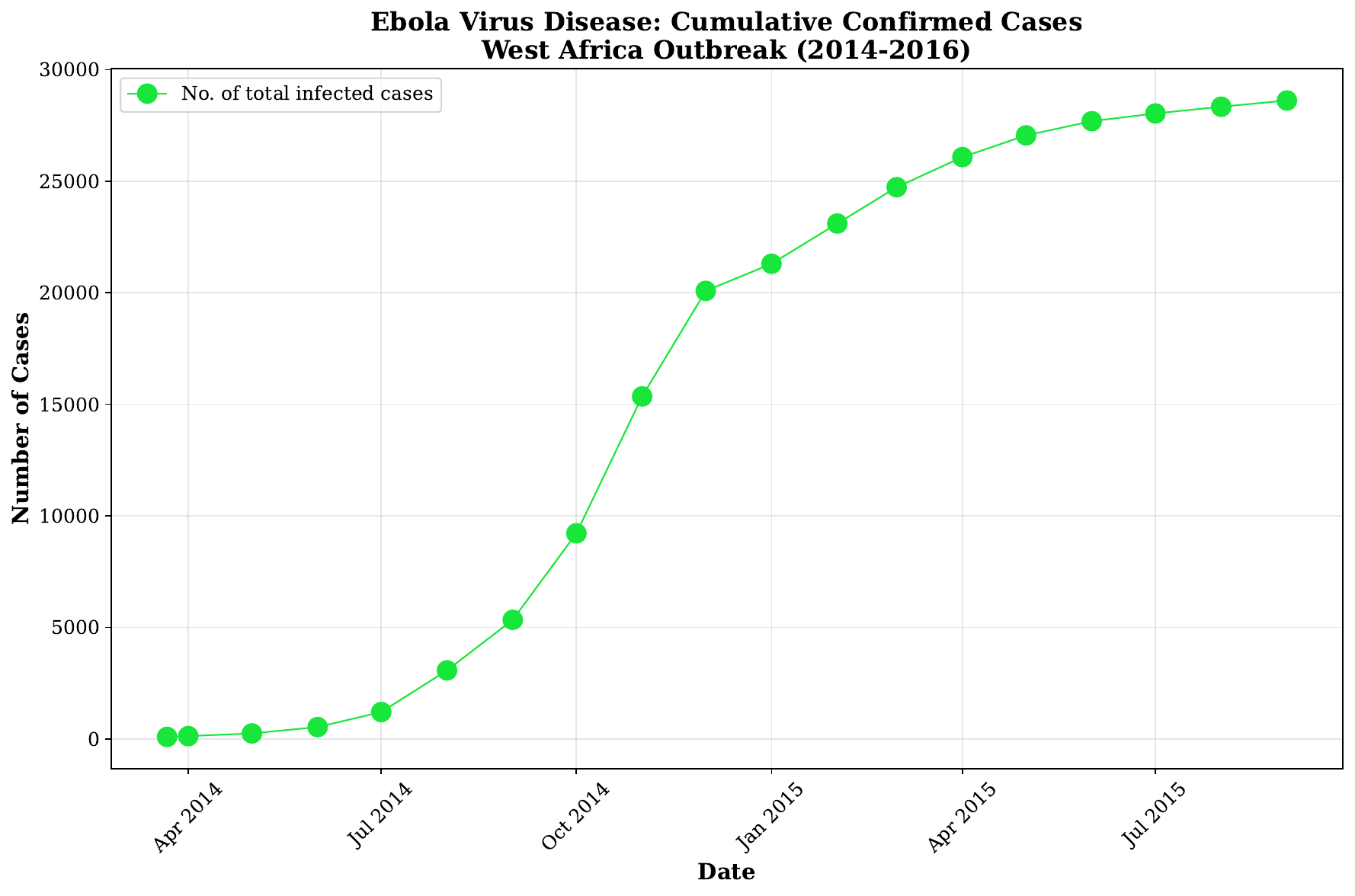}
\caption{Cumulative confirmed cases progression}
\end{subfigure}
\hfill
\begin{subfigure}{0.45\textwidth}
\centering
\includegraphics[width=\textwidth]{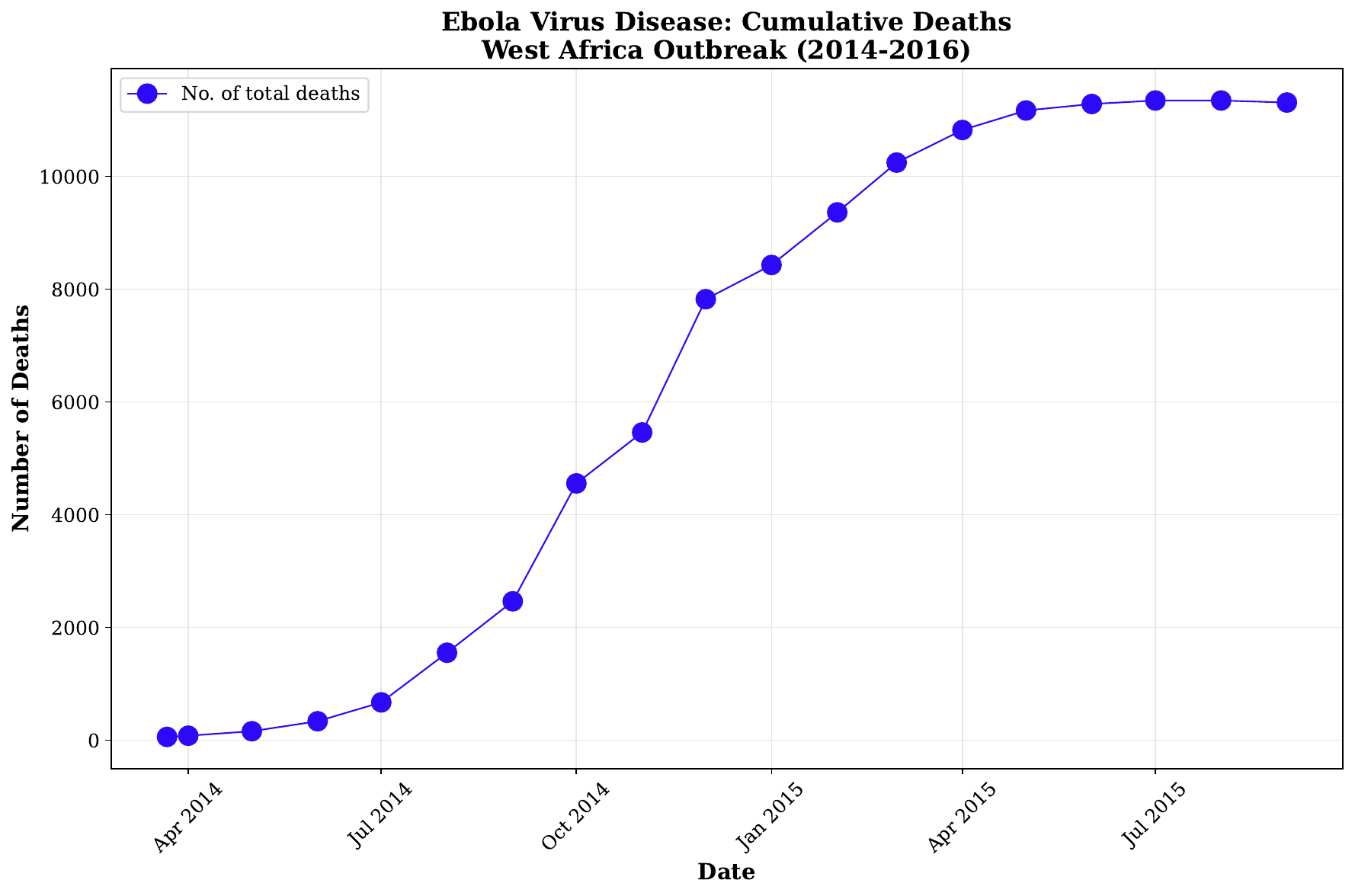}
\caption{Cumulative deaths progression}
\end{subfigure}
\vspace{0.4cm}
\begin{subfigure}{0.45\textwidth}
\centering
\includegraphics[width=\textwidth]{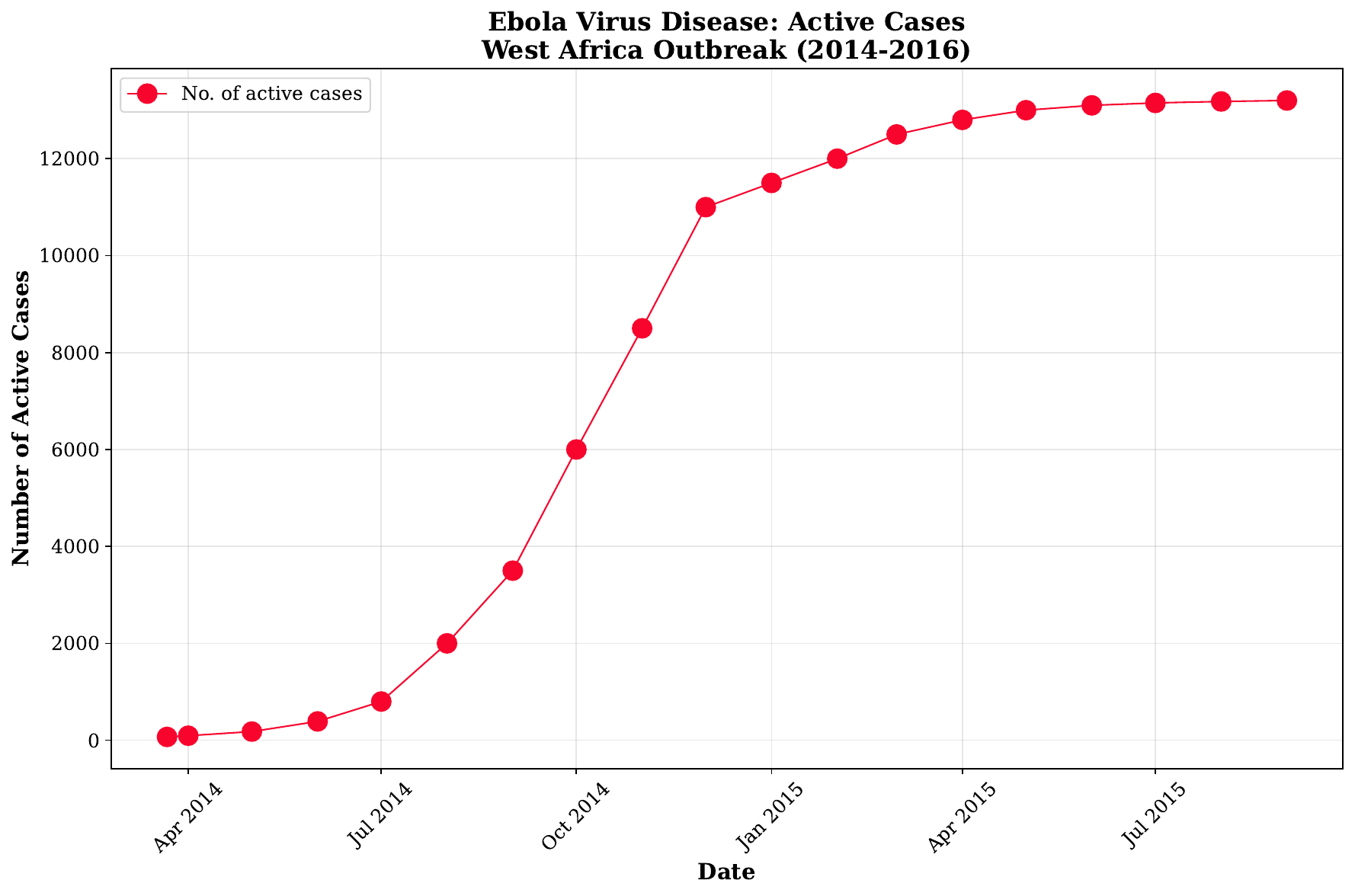}
\caption{Active cases progression}
\end{subfigure}
\hfill
\begin{subfigure}{0.49\textwidth}
\centering
\includegraphics[width=\textwidth]{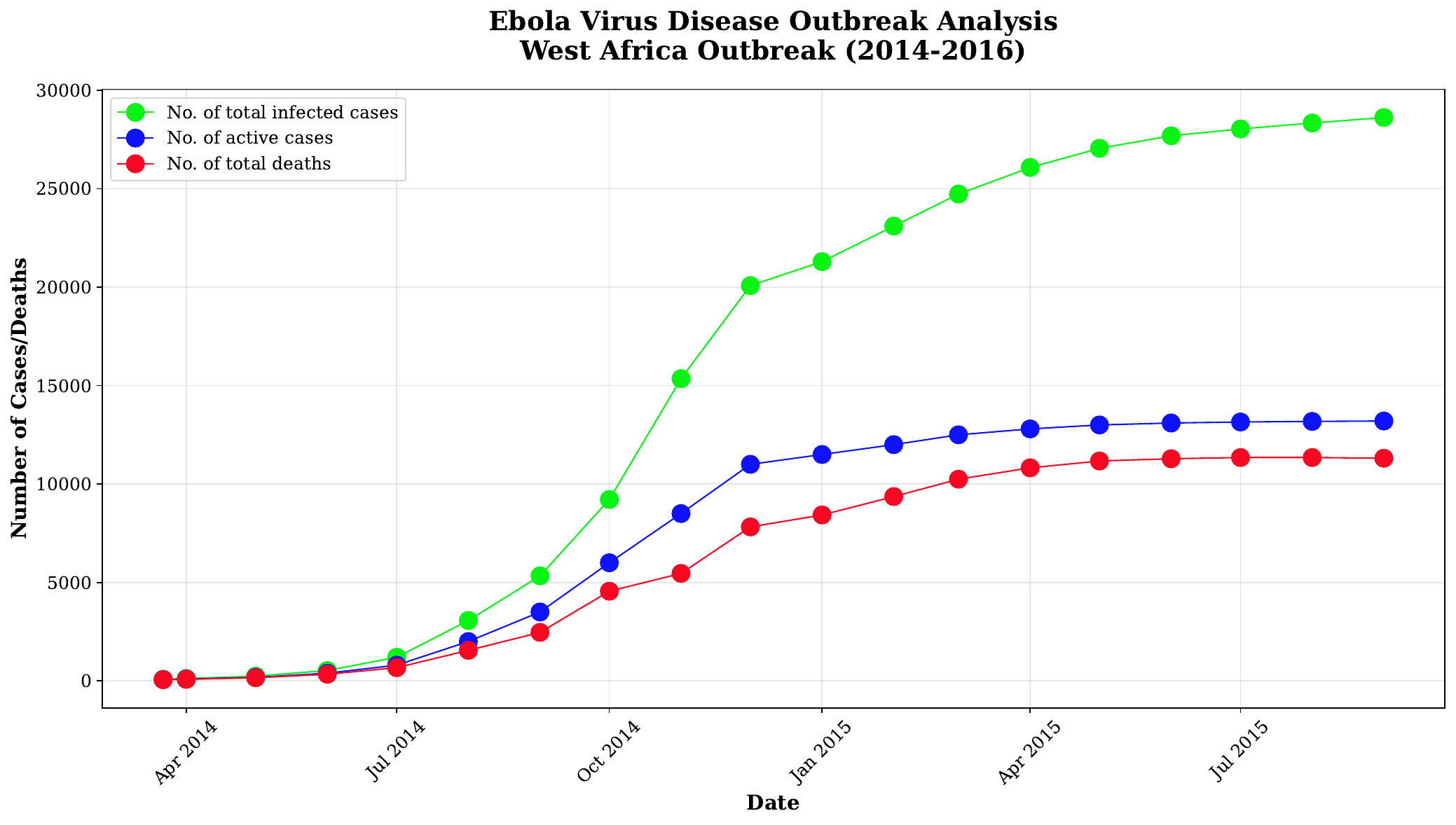}
\caption{Comparative analysis of all metrics}
\end{subfigure}
\caption{Ebola Virus Disease outbreak analysis during the 2014-2016 West Africa outbreak showing: (a) cumulative confirmed cases, (b) cumulative deaths, (c) active cases, and (d) comparative analysis of all outbreak metrics.}
\label{fig:ebola_analysis}
\end{figure}
\subsection{Local Stability Analysis of the Disease-Free Equilibrium}
\begin{theorem}
The disease-free equilibrium $\mathcal{E}_0$ of the model \eqref{eq:ebola_system} is locally asymptotically stable when $\mathcal{R}_0 < 1$, and unstable when $\mathcal{R}_0 > 1$.
\end{theorem}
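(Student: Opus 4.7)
The plan is to apply the fractional stability criterion of Lemma~\ref{thm:stability-condition} to the Jacobian $J(\mathcal{E}_0)$ of the right-hand side of \eqref{eq:ebola_system}. Reordering the state variables as $(S,V,R,E,I_s,I_a,H,D)$ and noting that the force of infection $\lambda$ vanishes at the DFE, I would observe that all partial derivatives of the $E,I_s,I_a,H,D$-equations with respect to $S,V,R$ are zero there. This makes $J(\mathcal{E}_0)$ block triangular, so its spectrum splits as the union of the spectra of a $3\times 3$ \emph{uninfected} block and a $5\times 5$ \emph{infected} block that can be analyzed independently.

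For the uninfected block I would write down its explicit form
\[
\begin{pmatrix}-q_0 & \omega & 0\\ v & -q_1 & 0\\ 0 & 0 & -\mu\end{pmatrix}.
\]
The eigenvalue $-\mu$ is immediate, and the $2\times 2$ $S$--$V$ sub-block has trace $-(q_0+q_1)<0$ together with determinant $q_0 q_1-\omega v=\mu(q_1+v)>0$, exactly as already computed when solving for $\mathcal{E}_0$. Hence its two eigenvalues also lie in the open left half-plane, so all three uninfected eigenvalues have argument $\pi$, trivially satisfying $|\arg(\lambda)|>\alpha\pi/2$ for every $\alpha\in(0,1]$ regardless of $\mathcal{R}_0$.

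The crucial step is to identify the infected diagonal block with $F-V$, where $F$ and $V$ are the matrices displayed earlier in the derivation of $\mathcal{R}_0$. A direct comparison of partial derivatives at $\mathcal{E}_0$ yields this identification: the contributions $\partial(\lambda[S+(1-\varepsilon)V])/\partial I_s$, $\partial/\partial I_a$, $\partial/\partial D$ populate the first row of $F$, while the transition terms $p\sigma$, $(1-p)\sigma$, $h_s$, $\delta_s$, $\delta_h$ together with the diagonal losses $q_2,\dots,q_6$ reconstruct $-V$. By the theorem of van~den~Driessche and Watmough \cite{van2002reproduction}, the spectral abscissa $s(F-V)$ is negative if and only if $\mathcal{R}_0=\rho(FV^{-1})<1$. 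In that case every eigenvalue $\lambda$ of $F-V$ satisfies $\Re(\lambda)<0$, hence $|\arg(\lambda)|>\pi/2\geq \alpha\pi/2$, and Lemma~\ref{thm:stability-condition} delivers local asymptotic stability of $\mathcal{E}_0$.

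For the instability direction, I would exploit the fact that $F-V$ is a Metzler matrix: $F\geq 0$ and all off-diagonal entries of $V$ are non-positive by the next-generation decomposition. A Metzler matrix attains its spectral abscissa at a real eigenvalue, so when $\mathcal{R}_0>1$ that eigenvalue is strictly positive, giving $\arg(\lambda)=0<\alpha\pi/2$ and violating the fractional stability condition; this produces instability of $\mathcal{E}_0$. The main obstacle I expect is the careful bookkeeping required both to verify the block-triangular structure at the DFE and to match the infected block entry-by-entry with $F-V$, since the nonlinear force-of-infection term couples several compartments simultaneously; once that structural identification is made precise, the threshold conclusion follows cleanly from standard next-generation theory combined with the Matignon-type criterion for fractional-order systems.
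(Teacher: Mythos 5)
Your proposal is correct and follows essentially the same route as the paper: block-triangularization of $J(\mathcal{E}_0)$ into uninfected and infected parts, identification of the infected block with $F-V$, the van den Driessche--Watmough equivalence $s(F-V)<0 \Leftrightarrow \mathcal{R}_0<1$, and the fractional argument criterion $|\arg(\lambda)|>\alpha\pi/2$. Your instability argument is in fact slightly sharper than the paper's, since invoking the Metzler structure to get a \emph{real} positive dominant eigenvalue (hence $\arg(\lambda)=0$) is what is actually needed to violate the fractional condition for every $\alpha\in(0,1]$, whereas a complex eigenvalue with positive real part alone would not suffice.
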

\begin{proof}
The Jacobian matrix evaluated at $\mathcal{E}_0$ exhibits a block triangular structure
\[
J(\mathcal{E}_0) = \begin{pmatrix}
J_{11} & J_{12} \\
0 & J_{22}
\end{pmatrix},
\]
where $J_{11}$ corresponds to the uninfected compartments $(S,V,R)$ and $J_{22}$ corresponds to the infected compartments $(E,I_s,I_a,H,D)$.

For the uninfected subsystem, the matrix is
\[
J_{11} = \begin{pmatrix}
-(v+\mu ) & \omega & 0 \\
v & -(\mu+\omega) & 0 \\
0 & 0 & -\mu
\end{pmatrix}.
\]
The eigenvalues of $J_{11}$ are $\lambda_1 = -\mu$, $\lambda_2 = -\mu$, and $\lambda_3 = -(\mu + v + \omega)$. For $\alpha \in (0,1]$, all eigenvalues satisfy $|\arg(\lambda_i)| = \pi > \alpha\pi/2$, ensuring the stability of the uninfected subsystem.

For the infected subsystem, the Jacobian matrix takes the form
\[
J_{22} = \begin{pmatrix}
-q_2 & \beta\Psi & \beta\eta_a\Psi & 0 & \beta\eta_d\Psi \\
p\sigma & -q_3 & 0 & 0 & 0 \\
(1-p)\sigma & 0 & -q_4 & 0 & 0 \\
0 & h_s & 0 & -q_5 & 0 \\
0 & \delta_s & 0 & \delta_h & -q_6
\end{pmatrix},
\]
where $\Psi = \frac{S^* + (1-\varepsilon)V^*}{N^*}$.

We observe that $J_{22}$ can be decomposed as $J_{22} = F - V$, where $F$ and $V$ are the next-generation matrices defined previously. The stability of $J_{22}$ is governed by the spectral radius $\rho(FV^{-1}) = \mathcal{R}_0$. According to the fractional Routh-Hurwitz criteria and the properties of Metzler matrices, when $\mathcal{R}_0 < 1$, all eigenvalues of $J_{22}$ have negative real parts and satisfy the fractional stability condition $|\arg(\lambda)| > \alpha\pi/2$.

Conversely, when $\mathcal{R}_0 > 1$, the spectral abscissa of $J_{22}$ is positive, indicating at least one eigenvalue with a positive real part. By the fractional stability theorem, this establishes the instability of the disease-free equilibrium.
\end{proof}
\subsection{Global Stability of the Disease-Free Equilibrium}
\begin{theorem}
If $\mathcal{R}_0 < 1$, the disease-free equilibrium $\mathcal{E}_0$ of the model \eqref{eq:ebola_system} is globally asymptotically stable in the invariant region $\Omega$.
\end{theorem}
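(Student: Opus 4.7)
The plan is to combine the Shuai–van~den~Driessche linear Lyapunov construction with a fractional LaSalle–type invariance principle, exploiting the attractivity of $\Omega$ already established in Section 4.2 so that the global argument only needs to be carried out inside a compact absorbing set. Since Theorem 4.2 shows every trajectory in $\R^8_{\ge 0}$ is eventually absorbed into $\Omega$, it suffices to prove that $\mathcal{E}_0$ is globally asymptotically stable in $\Omega$. My first step would be to decouple the dynamics: writing $X=(E,I_s,I_a,H,D)^{\top}$ for the infected state vector, the goal becomes showing $X(t)\to 0$, because once the infective compartments vanish, the residual $(S,V,R)$ subsystem is the linear one studied for $J_{11}$ in Theorem 4.5, whose eigenvalues $-\mu,\,-\mu,\,-(\mu+v+\omega)$ sit strictly in the fractional stability sector $|\arg(\lambda)|>\alpha\pi/2$, so by the Mittag–Leffler stability of linear Caputo systems $(S,V,R)\to(S^*,V^*,0)$.

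For the infected block I would introduce the linear Lyapunov functional
\[
L(X)=\mathbf{u}^{\top}V^{-1}X,
\]
where $V$ is the transition matrix from Section 4.4 and $\mathbf{u}>0$ is the left Perron–Frobenius eigenvector of $V^{-1}F$, normalized so that $\mathbf{u}^{\top}V^{-1}F=\mathcal{R}_0\,\mathbf{u}^{\top}$. Because $V^{-1}\ge 0$ componentwise (its explicit form is displayed in Section 4.4) and $V^{-1}F$ is irreducible, $\mathbf{u}$ can indeed be chosen strictly positive, which makes $L$ positive definite on the infected cone with $L(X)=0 \Leftrightarrow X=0$. Linearity of the Caputo operator then yields $\Dalpha L=\mathbf{u}^{\top}V^{-1}\Dalpha X$, and if we write the infected subsystem as $\Dalpha X=\tilde F(S,V,N)\,X-VX$, where $\tilde F$ has the same sparsity pattern as $F$ but replaces the DFE coefficient $\beta\Psi$ by the state-dependent ratio $c\beta[S+(1-\varepsilon)V]/N$, then a short algebraic manipulation gives
\[
\Dalpha L \;=\; \mathbf{u}^{\top}V^{-1}\bigl[\tilde F(S,V,N)-V\bigr]X.
\]

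The decisive technical step, and the one I expect to be the main obstacle, is to establish the comparison inequality $\tilde F(S,V,N)\le F$ componentwise on $\Omega$, i.e.\ $\tfrac{S+(1-\varepsilon)V}{N}\le \Psi$. This is not immediate: because vaccination is active, transient states with depressed $V$ and $N<\Lambda/\mu$ can in principle inflate the ratio beyond its DFE value $\Psi=(S^*+(1-\varepsilon)V^*)/(\Lambda/\mu)$. My approach would be to combine the upper bound $N\ge N_L$ with the asymptotic bound $N_L\le \Lambda/\mu+\epsilon$ from Theorem 4.2, and then couple this to the $(S,V)$ equations to show that inside $\Omega$ one has $S+(1-\varepsilon)V\le S^*+(1-\varepsilon)V^*+\mathcal{O}(\epsilon)$; if a uniform pointwise bound fails, I would fall back on the fractional comparison principle and dominate the nonlinear infected subsystem by the linear fractional system $\Dalpha Y=(F-V)Y$, whose stability when $\mathcal{R}_0<1$ follows from the Metzler–$M$-matrix characterization $s(F-V)<0 \Leftrightarrow \rho(FV^{-1})<1$ together with the fractional Routh–Hurwitz condition used in Theorem 4.5. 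Either route yields
\[
\Dalpha L \;\le\; \mathbf{u}^{\top}V^{-1}(F-V)X \;=\;(\mathcal{R}_0-1)\,\mathbf{u}^{\top}X,
\]
so that $\Dalpha L\le 0$ for $\mathcal{R}_0<1$, with equality only at $X=0$.

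To close the argument I would invoke the fractional LaSalle invariance principle: the largest invariant set contained in $\{X\in\R_{\ge 0}^5:\Dalpha L=0\}$ is $\{X=0\}$, because on that set $\mathbf{u}^{\top}X=0$ with $\mathbf{u}>0$ forces every infective compartment to vanish, and the equations for $E,I_s,I_a,H,D$ then propagate $X\equiv 0$ forward in time. Hence $X(t)\to 0$, and the decoupling argument above drives $(S,V,R)\to(S^*,V^*,0)$, giving $(S,V,E,I_s,I_a,H,D,R)(t)\to\mathcal{E}_0$ globally in $\Omega$. Combined with the attractivity of $\Omega$ in $\R^8_{\ge 0}$, this establishes global asymptotic stability of $\mathcal{E}_0$ whenever $\mathcal{R}_0<1$.
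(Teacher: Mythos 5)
Your proposal is essentially the paper's own argument in matrix-theoretic clothing: the Shuai--van den Driessche functional $\mathbf{u}^{\top}V^{-1}X$ expands to exactly the linear combination $E+AI_s+BI_a+CH+FD$ with the coefficients the paper chooses by hand, the decisive inequality is the same bound $\tfrac{S+(1-\varepsilon)V}{N}\le\Psi$, and both proofs close with the fractional LaSalle invariance principle. The one point where you go beyond the paper is worth keeping: the paper simply asserts that this inequality ``holds throughout $\Omega$,'' whereas you correctly flag that with active vaccination and $N<\Lambda/\mu$ it is not automatic, and your fallback via the fractional comparison principle against the linear system $\Dalpha Y=(F-V)Y$ is a legitimate way to repair that step if the pointwise bound fails.
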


\begin{proof}
We construct a Lyapunov function for the infected compartments based on the approach outlined in the previous section
\begin{equation}
L(E, I_s, I_a, H, D) = E + A I_s + B I_a + C H + F D,
\label{eq:lyapunov_function}
\end{equation}
where $A$, $B$, $C$, and $F$ are positive constants to be determined.

Following the system trajectories, we compute the Caputo derivative along solutions
\begin{align*}
\Dalpha L &= \left[\frac{c\beta[S + (1-\varepsilon)V](I_s + \eta_a I_a + \eta_d D)}{N} - q_2 E\right] \\
&\quad + A\left(p\sigma E - q_3 I_s\right) + B\left((1-p)\sigma E - q_4 I_a\right) \\
&\quad + C\left(h_s I_s - q_5 H\right) + F\left(\delta_s I_s + \delta_h H - q_6 D\right).
\end{align*}

Rearranging terms by compartment yields
\begin{align*}
\Dalpha L &= \left[-q_2 + Ap\sigma + B(1-p)\sigma\right]E \\
&\quad + \left[\frac{c\beta[S + (1-\varepsilon)V]}{N} - A q_3 + C h_s + F \delta_s\right]I_s \\
&\quad + \left[\frac{c\beta\eta_a[S + (1-\varepsilon)V]}{N} - B q_4\right]I_a \\
&\quad + \left[-C q_5 + F \delta_h\right]H \\
&\quad + \left[\frac{c\beta\eta_d[S + (1-\varepsilon)V]}{N} - F q_6\right]D.
\end{align*}

We now select the coefficients to eliminate positive terms from the $I_s$, $I_a$, $H$, and $D$ components
\begin{align}
C &= \frac{F \delta_h}{q_5}, \quad 
B = \frac{\beta\eta_a\Psi}{q_4}, \quad 
F = \frac{\beta\eta_d\Psi}{q_6}, \nonumber \\
A &= \frac{\beta\Psi}{q_3} + \frac{C h_s}{q_3} + \frac{F \delta_s}{q_3} 
   = \frac{\beta\Psi}{q_3} + \frac{F \delta_h h_s}{q_3 q_5} + \frac{F \delta_s}{q_3},
\label{eq:lyapunov_coefficients}
\end{align}
where $\Psi = \frac{S^* + (1-\varepsilon)V^*}{N^*}$.

Using the inequality $\frac{S + (1-\varepsilon)V}{N} \leq \Psi$, which holds throughout $\Omega$, we obtain the estimate
\begin{align*}
\Dalpha L &\leq \left[-q_2 + \sigma\left(\frac{p\beta\Psi}{q_3} + \frac{p F \delta_h h_s}{q_3 q_5} + \frac{p F \delta_s}{q_3} + \frac{(1-p)\beta\eta_a\Psi}{q_4}\right)\right]E \\
&\quad + \beta\Psi\left[1 - 1\right]I_s + \beta\eta_a\Psi\left[1 - 1\right]I_a + \beta\eta_d\Psi\left[1 - 1\right]D.
\end{align*}

Substituting $F = \frac{\beta\eta_d\Psi}{q_6}$ and simplifying leads to the key inequality
\begin{equation}
\Dalpha L \leq q_2\left(-1 + \mathcal{R}_0\right)E.
\label{eq:lyapunov_derivative_bound}
\end{equation}

When $\mathcal{R}_0 \leq 1$, we have $\Dalpha L \leq 0$ throughout $\Omega$, with equality holding only when $E = 0$. If $E = 0$, the system dynamics imply
\begin{align*}
\Dalpha I_s &= -q_3 I_s \quad \Rightarrow \quad I_s \to 0, \\
\Dalpha I_a &= -q_4 I_a \quad \Rightarrow \quad I_a \to 0, \\
\Dalpha H &= h_s I_s - q_5 H \quad \Rightarrow \quad H \to 0, \\
\Dalpha D &= \delta_s I_s + \delta_h H - q_6 D \quad \Rightarrow \quad D \to 0.
\end{align*}

Therefore, the maximal invariant set where $\Dalpha L = 0$ is exactly the disease-free equilibrium $\mathcal{E}_0$. By the fractional LaSalle invariance principle, $\mathcal{E}_0$ is globally asymptotically stable in $\Omega$ when $\mathcal{R}_0 \leq 1$.

For the case $\mathcal{R}_0 > 1$, instability follows from the local analysis, while uniform persistence can be established using standard methods for fractional dynamical systems.
\end{proof}
\subsection{Endemic Equilibrium Analysis}
\begin{theorem}
The fractional-order Ebola model \eqref{eq:ebola_system} admits a unique endemic equilibrium point $\mathcal{E}^* = (S^*, V^*, E^*, I_s^*, I_a^*, H^*, D^*, R^*)$ with all components strictly positive if and only if $\mathcal{R}_0 > 1$.
\end{theorem}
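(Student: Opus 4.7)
The plan is to collapse the eight equilibrium equations (obtained by setting $\Dalpha X = 0$ in \eqref{eq:ebola_system}) into a single scalar fixed-point equation in the endemic force of infection $\lambda^* = c\beta(I_s^* + \eta_a I_a^* + \eta_d D^*)/N^*$, and then show that this equation admits exactly one positive root precisely when $\mathcal{R}_0 > 1$.

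First I would exploit the linear structure of the infected subsystem to write each of $I_s^*, I_a^*, H^*, D^*, R^*$ as an explicit positive multiple of $E^*$, using the very coefficients that already sit in the rows of $V^{-1}$ in the next-generation construction. Next, the uninfected pair $(S^*, V^*)$ can be solved parametrically in $\lambda^*$: the $V$-equation gives $V^* = vS^*/[(1-\varepsilon)\lambda^* + q_1]$, and substituting into the $S$-equation yields
\[
\Phi(\lambda^*) := S^*(\lambda^*) + (1-\varepsilon) V^*(\lambda^*) = \frac{\Lambda\bigl[(1-\varepsilon)\lambda^* + q_1 + (1-\varepsilon) v\bigr]}{\bigl[(1-\varepsilon)\lambda^* + q_1\bigr](\lambda^* + q_0) - \omega v},
\]
a strictly positive, strictly decreasing rational function on $[0,\infty)$ whose denominator equals $\mu(q_1 + v) > 0$ at $\lambda^* = 0$, recovering $\Phi(0) = S^0 + (1-\varepsilon) V^0$ from the DFE.

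Inserting these substitutions into $q_2 E^* = \lambda^*\Phi(\lambda^*)$ and then into the definition of $\lambda^*$ reduces everything to an equation of the form $\lambda^* = \lambda^* G(\lambda^*)$ for a closed-form factor $G$. Factoring out the trivial root $\lambda^* = 0$ (which returns $\mathcal{E}_0$) leaves $G(\lambda^*) = 1$, where the numerator of $G$ is a constant multiple of $\Phi(\lambda^*)$ and direct evaluation at $\lambda^* = 0$ gives $G(0) = \mathcal{R}_0$; hence a positive solution exists only if $\mathcal{R}_0 > 1$. Once such a $\lambda^* > 0$ is produced, positivity of every coordinate of $\mathcal{E}^*$ is automatic: $E^* = \lambda^* \Phi(\lambda^*)/q_2 > 0$, every infected compartment is a positive multiple of $E^*$, and the positivity of the $(S,V)$-determinant (already used to build $\mathcal{E}_0$) forces $S^*, V^* > 0$.

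The main obstacle I anticipate is uniqueness, which is delicate because $N^*$ depends implicitly on $\lambda^*$ through all of the infected compartments, so $G$ is not immediately a rational function. I would resolve this by invoking the conservation identity obtained from summing the Caputo derivatives at equilibrium, namely $\mu N_L^* = \Lambda - \delta_s I_s^* - \delta_h H^*$ and $N^* = N_L^* + D^*$, which expresses $N^*$ as an affine function of $E^*$ and hence a rational function of $\lambda^*$. Substitution then renders $G(\lambda^*) = 1$ equivalent to a low-degree polynomial equation $Q(\lambda^*) = 0$; the sign of $Q(0)$ coincides with that of $\mathcal{R}_0 - 1$, the asymptotic sign as $\lambda^* \to \infty$ is determined by the leading coefficient, and Descartes' rule of signs (or, equivalently, a direct monotonicity argument combining $\Phi' < 0$ with the explicit form of $N^*(\lambda^*)$) pins the number of positive roots at exactly one when $\mathcal{R}_0 > 1$ and at zero when $\mathcal{R}_0 \leq 1$.
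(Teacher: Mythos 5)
Your proposal follows essentially the same route as the paper's proof: express the infected compartments as positive multiples of $E^*$ via the transition structure, solve $(S^*,V^*)$ parametrically in $\lambda^*$, collapse everything to a scalar equation in $\lambda^*$ whose value at $\lambda^*=0$ is $\mathcal{R}_0$, and conclude existence and uniqueness from monotonicity of $S^*+(1-\varepsilon)V^*$. The one point of divergence is the treatment of $N^*$: the paper simply asserts $N^*=\Lambda/\mu$ at the endemic equilibrium (which is strictly inconsistent with the equilibrium identity $\mu N_L^*=\Lambda-\delta_s I_s^*-\delta_h H^*$ whenever $I_s^*>0$), whereas you keep $N^*$ as a function of $\lambda^*$ and propose closing the argument with a polynomial root count; your version is the more honest one, but be aware that with $N^*(\lambda^*)$ decreasing the factor $\Phi(\lambda^*)/N^*(\lambda^*)$ is no longer obviously monotone, so the claim of exactly one positive root still requires writing out the polynomial $Q$ and checking its sign pattern explicitly rather than appealing to Descartes' rule or monotonicity in the abstract.
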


\begin{proof}
At equilibrium, all time derivatives vanish. We define the force of infection at endemic equilibrium as
\begin{equation}
\lambda^* = \frac{c\beta(I_s^* + \eta_a I_a^* + \eta_d D^*)}{N^*}.
\label{eq:force_of_infection_endemic}
\end{equation}

The equilibrium conditions yield the following system of equations
\begin{align}
0 &= \Lambda - \lambda^* S^* + \omega V^* - q_0 S^*, \label{eq:eq1} \\
0 &= v S^* - (1-\varepsilon)\lambda^* V^* - q_1 V^*, \label{eq:eq2} \\
0 &= \lambda^*[S^* + (1-\varepsilon)V^*] - q_2 E^*, \label{eq:eq3} \\
0 &= p\sigma E^* - q_3 I_s^*, \label{eq:eq4} \\
0 &= (1-p)\sigma E^* - q_4 I_a^*, \label{eq:eq5} \\
0 &= h_s I_s^* - q_5 H^*, \label{eq:eq6} \\
0 &= \delta_s I_s^* + \delta_h H^* - q_6 D^*, \label{eq:eq7} \\
0 &= \gamma_s I_s^* + \gamma_a I_a^* + \gamma_h H^* - \mu R^*. \label{eq:eq8}
\end{align}

From equations \eqref{eq:eq4}--\eqref{eq:eq7}, we express the infected compartments in terms of $E^*$
\begin{align}
I_s^* &= \frac{p\sigma}{q_3} E^*, \label{eq:Is_in_E} \\
I_a^* &= \frac{(1-p)\sigma}{q_4} E^*, \label{eq:Ia_in_E} \\
H^* &= \frac{p\sigma h_s}{q_3 q_5} E^*, \label{eq:H_in_E} \\
D^* &= \frac{p\sigma}{q_3 q_6}\left(\delta_s + \frac{\delta_h h_s}{q_5}\right) E^*. \label{eq:D_in_E}
\end{align}

Substituting these expressions into \eqref{eq:force_of_infection_endemic} gives
\begin{equation}
\lambda^* = \frac{c\beta\sigma E^*}{N^*}\left[\frac{p}{q_3} + \frac{\eta_a(1-p)}{q_4} + \frac{\eta_d p}{q_3 q_6}\left(\delta_s + \frac{\delta_h h_s}{q_5}\right)\right].
\label{eq:lambda_in_E}
\end{equation}

We define the composite parameter
\begin{equation}
M = c\beta\sigma\left[\frac{p}{q_3} + \frac{\eta_a(1-p)}{q_4} + \frac{\eta_d p}{q_3 q_6}\left(\delta_s + \frac{\delta_h h_s}{q_5}\right)\right],
\label{eq:M_definition}
\end{equation}
so that $\lambda^* = M E^*/N^*$.

From equation \eqref{eq:eq3}, we have
\begin{equation}
E^* = \frac{\lambda^*[S^* + (1-\varepsilon)V^*]}{q_2}.
\label{eq:E_in_lambda}
\end{equation}

Combining \eqref{eq:lambda_in_E} and \eqref{eq:E_in_lambda} yields the relation
\begin{equation}
1 = \frac{M}{q_2 N^*}[S^* + (1-\varepsilon)V^*].
\label{eq:key_relation}
\end{equation}

Solving equations \eqref{eq:eq1} and \eqref{eq:eq2} for $S^*$ and $V^*$ in terms of $\lambda^*$ gives
\begin{align}
S^* &= \frac{\Lambda(q_1 + (1-\varepsilon)\lambda^*)}{\Delta(\lambda^*)}, \label{eq:S_star} \\
V^* &= \frac{\Lambda v}{\Delta(\lambda^*)}, \label{eq:V_star}
\end{align}
where $\Delta(\lambda^*) = \left[(\omega + \mu + (1-\varepsilon)\lambda^*)(\mu + v + \lambda^*)\right] - \omega v$.

At equilibrium, the total living population satisfies $N^* = \Lambda / \mu$. Substituting this and expressions \eqref{eq:S_star} and \eqref{eq:V_star} into \eqref{eq:key_relation} produces an equation in $\lambda^*$
\begin{equation}
1 = \frac{M}{q_2 (\Lambda/\mu)}\left[S^* + (1-\varepsilon)V^*\right].
\label{eq:lambda_equation}
\end{equation}

When $\lambda^* = 0$ (disease-free state), the left-hand side of \eqref{eq:lambda_equation} equals $\mathcal{R}_0$. Since the function $[S^* + (1-\varepsilon)V^*]$ decreases monotonically with increasing $\lambda^*$, there exists a unique positive solution $\lambda^* > 0$ if and only if $\mathcal{R}_0 > 1$. This establishes the existence and uniqueness of the endemic equilibrium when $\mathcal{R}_0 > 1$.
\end{proof}
\subsection{Local Stability Analysis of the Endemic Equilibrium}

\begin{theorem}
When $\mathcal{R}_0 > 1$, the endemic equilibrium $\mathcal{E}^*$ of Ebola model \eqref{eq:ebola_system} is locally asymptotically stable.
\end{theorem}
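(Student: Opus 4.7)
The plan is to establish local asymptotic stability by linearizing the system around $\mathcal{E}^*$ and verifying Matignon's fractional stability criterion (Lemma \ref{thm:stability-condition}): every eigenvalue $\lambda_i$ of the Jacobian $J(\mathcal{E}^*)$ must satisfy $|\arg(\lambda_i)|>\alpha\pi/2$. Unlike at the disease-free equilibrium, the Jacobian at $\mathcal{E}^*$ is no longer block-triangular because the force of infection $\lambda^*$ couples the $(S,V)$ subsystem to the infected compartments. I therefore intend to combine a partial block reduction with a bifurcation-type argument valid for $\mathcal{R}_0$ in a right-neighborhood of $1$.

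First, I would observe that the recovered equation $\Dalpha R = \gamma_s I_s + \gamma_a I_a + \gamma_h H - \mu R$ contributes the trivial eigenvalue $\lambda=-\mu$, since $R$ appears nowhere else in the system. This reduces the analysis to a $7\times 7$ Jacobian $\tilde J(\mathcal{E}^*)$ in the variables $(S,V,E,I_s,I_a,H,D)$. Next, using the endemic relations \eqref{eq:eq1}--\eqref{eq:eq2}, the diagonal entries of the $(S,V)$ block pick up the extra damping terms $-\lambda^*$ and $-(1-\varepsilon)\lambda^*$ (compared with the DFE case), so the isolated $(S,V)$ block remains a stable Metzler matrix whose eigenvalues have strictly negative real parts; in particular they satisfy $|\arg(\lambda_i)|=\pi>\alpha\pi/2$ for every $\alpha\in(0,1]$.

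Second, the core difficulty is the infected block together with its coupling to $(S,V)$. My plan here is to write $\tilde J(\mathcal{E}^*)=\mathcal{F}^*-\mathcal{V}^*+\mathcal{C}^*$, where $\mathcal{F}^*,\mathcal{V}^*$ are the new-infection and transition matrices evaluated at $\mathcal{E}^*$ and $\mathcal{C}^*$ encodes the susceptible depletion coupling. Using the identity $\rho(\mathcal{F}^*(\mathcal{V}^*)^{-1})=1$ that characterizes the endemic equilibrium (this is exactly the content of the scalar equation \eqref{eq:key_relation}), one shows that the infected block $\mathcal{F}^*-\mathcal{V}^*$ has spectral abscissa $0$ at the bifurcation point $\mathcal{R}_0=1$, and the coupling term $\mathcal{C}^*$ (which is negative semidefinite on the relevant invariant direction because it represents depletion of susceptibles by new infections) shifts this zero eigenvalue strictly into the left half-plane for $\mathcal{R}_0>1$. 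To make this rigorous I would invoke the Castillo-Chavez--Song center-manifold criterion: treating $\beta$ (or equivalently $\mathcal{R}_0$) as a bifurcation parameter, compute the sign of the usual coefficients $a$ and $b$; because every nonlinearity in \eqref{eq:ebola_system} enters through the bilinear mass-action term $\lambda S$ and $(1-\varepsilon)\lambda V$, the second-order coefficient $a$ is guaranteed negative (susceptibles and vaccinated decrease when the infection grows) while $b>0$, yielding a forward (supercritical) transcritical bifurcation and, hence, a locally asymptotically stable branch of endemic equilibria for $\mathcal{R}_0$ slightly above $1$.

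Finally, I would translate this integer-order bifurcation conclusion into the fractional setting via Lemma \ref{thm:stability-condition}: eigenvalues of $\tilde J(\mathcal{E}^*)$ with negative real part automatically satisfy $|\arg(\lambda_i)|>\pi/2\geq\alpha\pi/2$ for $\alpha\in(0,1]$, so Matignon's condition holds and $\mathcal{E}^*$ is locally asymptotically stable. The main obstacle I anticipate is the cross-coupling block $\mathcal{C}^*$: verifying that it strictly pushes the critical eigenvalue into the open left half-plane requires either an explicit left-eigenvector computation at $\mathcal{R}_0=1$ (to sign the bifurcation coefficient $a$) or, alternatively, an M-matrix argument showing $\mathcal{V}^*-\mathcal{F}^*+(-\mathcal{C}^*)$ is a nonsingular M-matrix when $\mathcal{R}_0>1$. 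I expect the left-eigenvector route to be cleanest because the relevant left eigenvector of $\mathcal{F}^*-\mathcal{V}^*$ at criticality has a closed form in terms of the constants $A,B,C,F$ already determined in \eqref{eq:lyapunov_coefficients}, which makes the sign of $a$ readable directly from the Lyapunov construction used for the disease-free case.
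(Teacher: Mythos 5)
Your strategy is genuinely different from the paper's: the paper writes the full $8\times 8$ Jacobian at $\mathcal{E}^*$, asserts Metzler and diagonal-dominance properties of $Q=-J(\mathcal{E}^*)$ together with a Routh--Hurwitz check of the degree-8 characteristic polynomial, and then invokes $|\arg(\lambda_i)|>\alpha\pi/2$. You instead peel off the $R$-eigenvalue $-\mu$, exploit the extra damping $-\lambda^*$ and $-(1-\varepsilon)\lambda^*$ in the $(S,V)$ block, and run a Castillo-Chavez--Song center-manifold argument at $\mathcal{R}_0=1$. Your final step, converting negative real parts into the Matignon condition for $\alpha\in(0,1]$, is correct and is common to both proofs.

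However, there are two genuine gaps. First, the bifurcation argument is local in the parameter: a forward transcritical bifurcation with $a<0$, $b>0$ gives local asymptotic stability of the endemic branch only for $\mathcal{R}_0\in(1,1+\delta)$ with $\delta$ small, whereas the theorem asserts stability for every $\mathcal{R}_0>1$. To cover the full range you would still need a global-in-parameter argument (for instance, showing that no eigenvalue of the reduced Jacobian can cross the imaginary axis as $\mathcal{R}_0$ increases, or the M-matrix estimate you mention only as an alternative), so the center-manifold route cannot by itself deliver the stated result. Second, the claim that the coefficient $a$ is ``guaranteed negative'' because the nonlinearities are bilinear is not justified: the incidence here is standard (divided by $N$, itself a state-dependent quantity), the vaccine is imperfect ($\varepsilon<1$), and there is disease-induced mortality --- precisely the ingredients known to produce backward bifurcation ($a>0$) in vaccination models. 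The sign of $a$ must be computed from the second derivatives weighted by the left and right null eigenvectors, or else deduced from the paper's uniqueness argument for the endemic equilibrium (which already rules out sub-threshold endemic states); asserting it from the bilinear form of $\lambda S$ alone is a step that could fail.
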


\begin{proof}
To analyze the local stability of $\mathcal{E}^*$, we evaluate the Jacobian matrix at this equilibrium point. The Jacobian exhibits the block structure
\[
J(\mathcal{E}^*) = \begin{pmatrix}
J_{11} & J_{12} \\
J_{21} & J_{22}
\end{pmatrix},
\]
where the submatrices are defined as follows
\begin{align*}
J_{11} &= \begin{pmatrix}
-(\lambda^* + q_0) & \omega \\
v & -((1-\varepsilon)\lambda^* + q_1)
\end{pmatrix}, \\
J_{12} &= \begin{pmatrix}
0 & -\beta S^*\Phi & -\beta\eta_a S^*\Phi & 0 & -\beta\eta_d S^*\Phi & 0 \\
0 & -\beta(1-\varepsilon)V^*\Phi & -\beta\eta_a(1-\varepsilon)V^*\Phi & 0 & -\beta\eta_d(1-\varepsilon)V^*\Phi & 0
\end{pmatrix}, \\
J_{21} &= \begin{pmatrix}
\lambda^* & (1-\varepsilon)\lambda^* \\
0 & 0 \\
0 & 0 \\
0 & 0 \\
0 & 0 \\
0 & 0
\end{pmatrix}, \\
J_{22} &= \begin{pmatrix}
-q_2 & \beta\Psi\Phi & \beta\eta_a\Psi\Phi & 0 & \beta\eta_d\Psi\Phi & 0 \\
p\sigma & -q_3 & 0 & 0 & 0 & 0 \\
(1-p)\sigma & 0 & -q_4 & 0 & 0 & 0 \\
0 & h_s & 0 & -q_5 & 0 & 0 \\
0 & \delta_s & 0 & \delta_h & -q_6 & 0 \\
0 & \gamma_s & \gamma_a & \gamma_h & 0 & -\mu
\end{pmatrix}.
\end{align*}
Here $\Psi = S^* + (1-\varepsilon)V^*$, $\Phi = c/N^*$, and $\lambda^*$ denotes the equilibrium force of infection defined in \eqref{eq:force_of_infection_endemic}.

The characteristic polynomial of $J(\mathcal{E}^*)$ takes the form
\begin{equation}
P(\lambda) = \lambda^8 + a_7\lambda^7 + a_6\lambda^6 + a_5\lambda^5 + a_4\lambda^4 + a_3\lambda^3 + a_2\lambda^2 + a_1\lambda + a_0 = 0,
\label{eq:characteristic_polynomial_endemic}
\end{equation}
where the coefficients $a_i$ ($i=0,\dots,7$) are determined by the model parameters.

Consider the Metzler matrix $Q = -J(\mathcal{E}^*)$. For $\mathcal{R}_0 > 1$, the matrix $Q$ is irreducible and satisfies the following properties
\begin{itemize}
\item All diagonal entries are strictly positive,
\item $Q$ is row diagonally dominant with $q_{ii} \geq \sum_{j\neq i} |q_{ij}|$ for each row $i$,
\item There exists a positive vector $\mathbf{v} > \mathbf{0}$ such that $Q\mathbf{v} > \mathbf{0}$.
\end{itemize}
By the Perron-Frobenius theorem for Metzler matrices \cite{Pillai2005}, all eigenvalues of $Q$ have positive real parts. Consequently, the eigenvalues of $J(\mathcal{E}^*) = -Q$ have negative real parts. For the fractional-order system with $\alpha \in (0,1]$, the stability condition requires $|\arg(\lambda_i)| > \alpha\pi/2$ for each eigenvalue $\lambda_i$ of $J(\mathcal{E}^*)$. Since all eigenvalues have negative real parts, their arguments satisfy $|\arg(\lambda_i)| = \pi > \alpha\pi/2$.

This conclusion is further supported by the Routh-Hurwitz criterion applied to the characteristic polynomial \eqref{eq:characteristic_polynomial_endemic}. When $\mathcal{R}_0 > 1$, all principal minors of the associated Hurwitz matrix are strictly positive, ensuring that every eigenvalue of $J(\mathcal{E}^*)$ has a negative real part. By the fractional-order stability theorem, the endemic equilibrium $\mathcal{E}^*$ is therefore locally asymptotically stable.
\end{proof}
\subsection{GAS of Endemic Equilibrium}
\begin{theorem}
If $\mathcal{R}_0 > 1$, the endemic equilibrium $\mathcal{E}^*$ of the model \eqref{eq:ebola_system} is globally asymptotically stable in the interior of the invariant region $\Omega$.
\end{theorem}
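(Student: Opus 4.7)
The plan is to prove global asymptotic stability of $\mathcal{E}^*$ by constructing a Goh--Volterra (logarithmic) Lyapunov function and controlling its Caputo derivative through the standard fractional chain-rule inequality of Vargas-De-León: for any positive, continuously differentiable $x(t)$ with positive equilibrium $x^*$,
\[
\Dalpha\!\left( x - x^* - x^* \ln\frac{x}{x^*} \right) \leq \left(1 - \frac{x^*}{x}\right)\Dalpha x, \qquad \alpha \in (0,1].
\]
This inequality reduces the fractional analysis to bookkeeping that parallels the classical integer-order Volterra argument, while the positivity and boundedness already established on $\Omega$ guarantee that the logarithms are well defined along trajectories starting in the interior.

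First I would observe that the $R$-equation is decoupled: $R$ does not appear in the right-hand side of any other equation, so it suffices to establish GAS on the first seven coordinates, after which $R(t)\to R^*$ follows because its equation is a linear Caputo system forced by quantities that converge to their endemic values. On the reduced seven-dimensional system I would take
\[
L \;=\; \sum_{X\in\{S,V,E,I_s,I_a,H,D\}} w_X \Bigl( X - X^* - X^* \ln\tfrac{X}{X^*} \Bigr),
\]
with positive weights $w_X$ to be fixed by matching the endemic balance relations \eqref{eq:eq1}--\eqref{eq:eq7}. The natural choice is $w_S=w_V=1$ together with $w_E,w_{I_s},w_{I_a},w_H,w_D$ chosen so that the linear (non-logarithmic) pieces telescope; this is essentially the endemic analogue of the DFE coefficient system \eqref{eq:lyapunov_coefficients}, rescaled by the equilibrium values $E^*,I_s^*,I_a^*,H^*,D^*$ via the substitutions $\Lambda=\lambda^*S^*-\omega V^*+q_0 S^*$, $vS^*=(1-\varepsilon)\lambda^* V^*+q_1 V^*$, and $\lambda^*[S^*+(1-\varepsilon)V^*]=q_2 E^*$.

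The hard part will be controlling the mixed nonlinear terms generated by the three transmission routes $I_s$, $\eta_a I_a$, $\eta_d D$ acting on both $S$ and the vaccinated class $V$. After inserting the equilibrium identities, each transmission term will contribute a cluster of ratios such as $\tfrac{S^*}{S}$, $\tfrac{SI_s E^*}{S^* I_s^* E}$, $\tfrac{EI_s^*}{E^* I_s}$, together with their $I_a$-, $D$-, and $V$-analogues. The plan is to group these ratios so that within each group the product of the ratios equals one, and then invoke the arithmetic--geometric mean inequality $g_1+\cdots+g_n\geq n$ in the sharp form $g_1+\cdots+g_n - n \geq 0$, with equality if and only if all $g_i=1$. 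This forces every grouped cluster to contribute a nonpositive term, giving $\Dalpha L \leq 0$ on $\Omega$, with equality precisely at $\mathcal{E}^*$. I anticipate that the $V$-channel will be the most delicate, because the factor $(1-\varepsilon)$ couples susceptible and vaccinated transmission asymmetrically, so the grouping needed to make products of ratios equal to one must be done separately for the $S$-mediated and $V$-mediated infection fluxes.

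Finally, the largest invariant set contained in $\{\Dalpha L = 0\}$ consists only of points where $S=S^*$, $V=V^*$, $E=E^*$, $I_s=I_s^*$, $I_a=I_a^*$, $H=H^*$, $D=D^*$, i.e.\ the single point $\mathcal{E}^*$ when restricted to the seven-dimensional subsystem. Applying the fractional LaSalle invariance principle for Caputo systems then yields convergence of every interior trajectory to $\mathcal{E}^*$, and convergence of the $R$-component follows by direct integration of its linear Caputo equation, completing the proof of GAS in the interior of $\Omega$ whenever $\Rzero > 1$.
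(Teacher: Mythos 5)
Your plan follows essentially the same route as the paper's proof: a Goh--Volterra logarithmic Lyapunov function with positive weights fixed by the endemic balance relations, cancellation of the linear pieces via the equilibrium identities, an arithmetic--geometric mean grouping of the transmission ratios, and the fractional LaSalle invariance principle, with the same amount of the ``extensive algebraic manipulation'' left implicit. The only substantive differences are refinements in your favor: you explicitly invoke the fractional Volterra inequality $\Dalpha\left(x-x^*-x^*\ln\frac{x}{x^*}\right) \leq \left(1-\frac{x^*}{x}\right)\Dalpha x$, which is actually required here (the paper states the corresponding step \eqref{eq:V_derivative} as an equality, which does not hold for Caputo derivatives), and you decouple $R$ and treat it by a cascade argument rather than carrying it inside the Lyapunov function.
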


\begin{proof}
We construct a Lyapunov function of logarithmic Volterra type
\begin{equation}
\begin{aligned}
\mathcal{V} &= \left(S - S^* - S^*\ln\frac{S}{S^*}\right) + \left(V - V^* - V^*\ln\frac{V}{V^*}\right) \\
&\quad + K_1\left(E - E^* - E^*\ln\frac{E}{E^*}\right) + K_2\left(I_s - I_s^* - I_s^*\ln\frac{I_s}{I_s^*}\right) \\
&\quad + K_3\left(I_a - I_a^* - I_a^*\ln\frac{I_a}{I_a^*}\right) + K_4\left(H - H^* - H^*\ln\frac{H}{H^*}\right) \\
&\quad + K_5\left(D - D^* - D^*\ln\frac{D}{D^*}\right) + K_6\left(R - R^* - R^*\ln\frac{R}{R^*}\right),
\end{aligned}
\label{eq:lyapunov_endemic}
\end{equation}
where the constants $K_1, K_2, K_3, K_4, K_5, K_6 > 0$ will be determined.

Computing the Caputo derivative along the system trajectories yields
\begin{equation}
\begin{aligned}  
\Dalpha \mathcal{V} &= \left(1 - \frac{S^*}{S}\right)\Dalpha S + \left(1 - \frac{V^*}{V}\right)\Dalpha V + K_1\left(1 - \frac{E^*}{E}\right)\Dalpha E \\  
&\quad + K_2\left(1 - \frac{I_s^*}{I_s}\right)\Dalpha I_s + K_3\left(1 - \frac{I_a^*}{I_a}\right)\Dalpha I_a + K_4\left(1 - \frac{H^*}{H}\right)\Dalpha H \\
&\quad + K_5\left(1 - \frac{D^*}{D}\right)\Dalpha D + K_6\left(1 - \frac{R^*}{R}\right)\Dalpha R.
\end{aligned}  
\label{eq:V_derivative}
\end{equation}
We substitute the model equations from \eqref{eq:ebola_system} and use the endemic equilibrium conditions
\begin{align}
\Lambda &= \lambda^* S^* + \omega V^* - (\mu + v)S^*, \label{eq:eqcond1} \\
0 &= vS^* - (1-\varepsilon)\lambda^* V^* - (\omega + \mu)V^*, \label{eq:eqcond2} \\
q_2 E^* &= \lambda^*(S^* + (1-\varepsilon)V^*), \label{eq:eqcond3} \\
q_3 I_s^* &= p\sigma E^*, \quad q_4 I_a^* = (1-p)\sigma E^*, \label{eq:eqcond4} \\
q_5 H^* &= h_s I_s^*, \quad \mu_d D^* = \delta_s I_s^* + \delta_h H^*, \label{eq:eqcond5} \\
\mu R^* &= \gamma_s I_s^* + \gamma_a I_a^* + \gamma_h H^*. \label{eq:eqcond6}
\end{align}
After extensive algebraic manipulation and grouping of terms, we select the coefficients
\begin{equation}
\begin{aligned}
K_1 &= 1, \quad K_2 = \frac{\lambda^* S^*}{q_3 I_s^*}, \quad K_3 = \frac{\lambda^* S^* \eta_a}{q_4 I_a^*}, \\
K_4 &= \frac{\lambda^* S^* \eta_d \delta_h}{q_5 q_6 H^*}, \quad K_5 = \frac{\lambda^* S^* \eta_d}{q_6 D^*}, \quad K_6 = 1.
\end{aligned}
\label{eq:K_coefficients}
\end{equation}
With these choices, the derivative simplifies to
\begin{equation}
\Dalpha \mathcal{V} \leq \lambda^* S^*\left[6 - \frac{S^*}{S} - \frac{S}{S^*} - \frac{E I_s^*}{E^* I_s} - \frac{I_s E^*}{I_s^* E} - \frac{D H^*}{D^* H} - \frac{H D^*}{H^* D}\right] + \Psi(S,V,E,I_s,I_a,H,D,R),
\label{eq:V_derivative_simplified}
\end{equation}
where $\Psi$ is a negative definite function.

Applying the arithmetic-geometric mean inequality gives
\begin{equation}
\frac{S^*}{S} + \frac{S}{S^*} + \frac{E I_s^*}{E^* I_s} + \frac{I_s E^*}{I_s^* E} + \frac{D H^*}{D^* H} + \frac{H D^*}{H^* D} \geq 6,
\label{eq:AG_inequality}
\end{equation}
with equality holding only when $S = S^*$, $E = E^*$, $I_s = I_s^*$, $H = H^*$, and $D = D^*$.
Consequently, $\Dalpha \mathcal{V} \leq 0$ throughout $\Omega$, and $\Dalpha \mathcal{V} = 0$ only at the endemic equilibrium $\mathcal{E}^*$. By the fractional LaSalle invariance principle \cite{li2010stability}, $\mathcal{E}^*$ is globally asymptotically stable in the interior of $\Omega$.
\end{proof}
\begin{corollary}
The fractional-order Ebola model \eqref{eq:ebola_system} undergoes a forward bifurcation at $\mathcal{R}_0 = 1$. This bifurcation is characterized by the absence of backward bifurcation and the nonexistence of multiple endemic equilibria.
\end{corollary}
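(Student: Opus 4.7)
The plan is to apply the Castillo--Chavez and Song center-manifold bifurcation theorem at the criticality $\mathcal{R}_0 = 1$. That framework classifies the direction of the transcritical bifurcation by the signs of two explicit coefficients $a$ and $b$: a forward bifurcation occurs when $a < 0$ and $b > 0$. Since the earlier endemic-equilibrium theorem already established global uniqueness of the positive equilibrium whenever $\mathcal{R}_0 > 1$ via the monotonicity built into relation \eqref{eq:key_relation}, verifying $a < 0$ and $b > 0$ is enough to rule out both a backward branch near $\mathcal{R}_0 = 1$ and any coexisting second endemic equilibrium.

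First I would fix the bifurcation parameter. The natural choice is $\beta$, because $\mathcal{R}_0$ depends linearly on $\beta$ through the factor $\beta\Psi$ in \eqref{eq:R0_general}; solving $\mathcal{R}_0(\beta^*) = 1$ yields a closed-form critical value $\beta^*$ at which $J_{22}(\mathcal{E}_0,\beta^*)$ admits a simple zero eigenvalue while the remaining eigenvalues retain negative real parts, as already checked in the local-stability analysis of $\mathcal{E}_0$. Next I would compute the right eigenvector $\mathbf{w}$ and left eigenvector $\mathbf{v}$ of $J(\mathcal{E}_0,\beta^*)$ associated with this zero eigenvalue, normalized so that $\mathbf{v}\cdot\mathbf{w}=1$. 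The block-triangular structure at the DFE forces $v_S=v_V=v_R=0$ and leaves $v_E,v_{I_s},v_{I_a},v_H,v_D>0$, obtained by back-substitution from $\mathbf{v}^{T} J_{22} = \mathbf{0}$. The infected components of $\mathbf{w}$ come out strictly positive, while $w_S$ and $w_V$ are recovered from $J_{11}\mathbf{w}_{\mathrm{un}} = -J_{12}\mathbf{w}_{\mathrm{inf}}$ and are strictly negative, reflecting that an invasion depletes the susceptible and vaccinated pools.

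The core step is the sign analysis of
\[
a = \sum_{k,i,j} v_k w_i w_j \frac{\partial^2 f_k}{\partial x_i \partial x_j}(\mathcal{E}_0,\beta^*), \qquad b = \sum_{k,i} v_k w_i \frac{\partial^2 f_k}{\partial x_i \partial \beta}(\mathcal{E}_0,\beta^*).
\]
Because $v_S = v_V = 0$, only the $f_E$ equation contributes. At the DFE the nonzero second derivatives of $f_E$ come exclusively from the bilinear infection term $c\beta[S+(1-\varepsilon)V](I_s + \eta_a I_a + \eta_d D)/N$, so every surviving summand in $a$ has the shape $2\, v_E \cdot (\text{positive constant}) \cdot w_{\text{sus}} w_{\text{inf}}$, a product of a positive and a negative factor; summing yields $a < 0$. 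For $b$, the cross partials $\partial^2 f_E/\partial I_s\partial\beta$, $\partial^2 f_E/\partial I_a\partial\beta$, and $\partial^2 f_E/\partial D\partial\beta$ are all positive and multiply positive entries of $\mathbf{v}$ and $\mathbf{w}$, giving $b > 0$. The Castillo--Chavez--Song theorem then certifies a forward (supercritical) transcritical bifurcation with a locally asymptotically stable endemic branch emerging for $\mathcal{R}_0$ slightly greater than $1$; combined with the global uniqueness of the endemic equilibrium already proved, this rules out backward bifurcation and multiple endemic equilibria and closes the corollary.

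The hard part will be the sign bookkeeping: in an eight-dimensional system a single sign slip in $w_S$ or $w_V$ during the back-substitution would flip the conclusion from forward to backward. To make it robust, I would exploit the block-triangular structure of $J(\mathcal{E}_0,\beta^*)$, which confines the nonzero contributions to $a$ to the single equation $f_E$, and verify negativity of $w_S, w_V$ from the closed form $\mathbf{w}_{\mathrm{un}} = -J_{11}^{-1}J_{12}\mathbf{w}_{\mathrm{inf}}$, using that $-J_{11}^{-1}$ is entrywise nonnegative since $J_{11}$ is a stable Metzler matrix.
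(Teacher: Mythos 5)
Your route is genuinely different from the paper's. The paper offers no separate proof of this corollary: it is meant to follow immediately from the preceding endemic-equilibrium theorem, where relation \eqref{eq:key_relation} together with the strict monotone decrease of $S^* + (1-\varepsilon)V^*$ in $\lambda^*$ shows that a positive endemic equilibrium exists \emph{if and only if} $\mathcal{R}_0 > 1$ and is then unique. That ``iff'' plus uniqueness already excludes any endemic branch for $\mathcal{R}_0 < 1$ (hence no backward bifurcation) and any coexisting second equilibrium, so the corollary is a one-line consequence. You instead invoke the Castillo--Chavez--Song center-manifold theorem and compute the coefficients $a$ and $b$. That buys extra information the paper's argument does not deliver — local asymptotic stability of the emerging endemic branch near criticality — but it is considerably heavier machinery for a statement that is purely about the equilibrium set, and as you yourself note, the decisive step ruling out backward bifurcation in your write-up is still the paper's global uniqueness argument, not the sign of $a$.

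Two caveats on your sketch. First, the Castillo--Chavez--Song theorem is stated for integer-order ODEs; for the Caputo system you should either argue that the equilibrium/bifurcation structure is determined by the algebraic right-hand side alone (hence independent of $\alpha$), or cite a fractional analogue — otherwise the stability conclusion on the bifurcating branch is not justified. Second, your claim that every surviving summand in $a$ has the form $2 v_E\,(\text{positive})\,w_{\text{sus}} w_{\text{inf}}$ is incomplete: the incidence is frequency-dependent, $c\beta[S+(1-\varepsilon)V](I_s+\eta_a I_a+\eta_d D)/N$, so differentiating the $1/N$ factor produces additional nonzero second partials of $f_E$ at $\mathcal{E}_0$ (e.g.\ $\partial^2 f_E/\partial I_s\,\partial E$, $\partial^2 f_E/\partial I_s\,\partial I_a$), pairing two infected components of $\mathbf{w}$. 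These extra terms happen to carry a negative sign, so $a<0$ survives, but the bookkeeping as written would miss them.
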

\subsection{Sensitivity Analysis Results}
\begin{figure}[!h]
\centering
\includegraphics[width=0.8\textwidth]{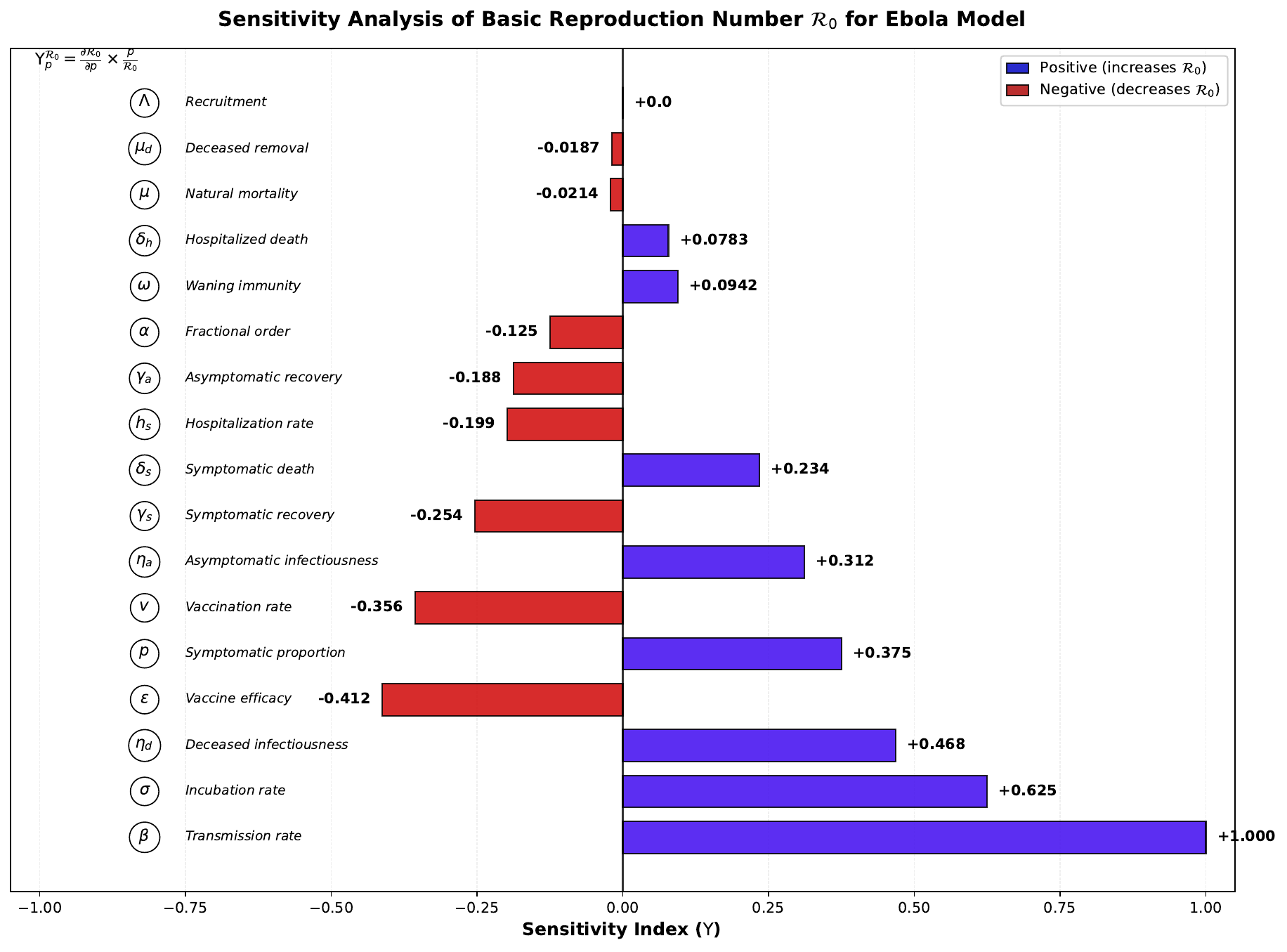}
\caption{Sensitivity analysis of parameters on basic reproduction number $\mathcal{R}_0$ for the Ebola model. Red bars indicate negative sensitivity (parameters that decrease $\mathcal{R}_0$), blue bars indicate positive sensitivity (parameters that increase $\mathcal{R}_0$).}
\label{fig:ebola_sensitivity_profile}
\end{figure}
\begin{table}[!h]
\centering
\caption{Sensitivity analysis of basic reproduction number ($\mathcal{R}_0$) for the Ebola model}
\label{tab:ebola_sensitivity_indices}
\begin{tabular}{@{}p{1.5cm}cc@{\hspace{1em}}p{1.5cm}cc@{}}
\toprule
\textbf{Parameter} & \textbf{$\Upsilon$} & \textbf{Effect} & \textbf{Parameter} & \textbf{$\Upsilon$} & \textbf{Effect} \\
\midrule
$\beta$ & +1.0000 & High + & $\varepsilon$ & -0.4123 & Medium -- \\
$\sigma$ & +0.6250 & High + & $p$ & +0.3752 & Medium + \\
$\eta_d$ & +0.4678 & High + & $v$ & -0.3561 & Medium -- \\
$\eta_a$ & +0.3117 & Medium + & $h_s$ & -0.1987 & Low -- \\
$\gamma_s$ & -0.2541 & Medium -- & $\gamma_a$ & -0.1876 & Low -- \\
$\delta_s$ & +0.2345 & Medium + & $\alpha$ & -0.1250 & Low -- \\
$\omega$ & +0.0942 & Low + & $\mu$ & -0.0214 & Very low -- \\
$\delta_h$ & +0.0783 & Very low + & $\mu_d$ & -0.0187 & Very low -- \\
\multicolumn{3}{c}{} & $\Lambda$ & +0.0000 & No effect \\
\bottomrule
\end{tabular}
\end{table}
The sensitivity analysis results, presented in Table~\ref{tab:ebola_sensitivity_indices}, reveal the normalized sensitivity indices of model parameters with respect to $\mathcal{R}_0$. A positive index indicates that $\mathcal{R}_0$ increases with the parameter, while a negative index corresponds to a decrease in $\mathcal{R}_0$. Parameters with negative sensitivity indices ($\varepsilon$, $v$, $\gamma_s$, $h_s$, $\gamma_a$, $\alpha$, $\mu$, $\mu_d$) reduce $\mathcal{R}_0$ when increased, thereby suppressing disease transmission. Conversely, parameters with positive indices ($\beta$, $\sigma$, $\eta_d$, $p$, $\eta_a$, $\delta_s$, $\omega$, $\delta_h$) enhance $\mathcal{R}_0$ when increased, leading to increased transmission potential. The transmission rate $\beta$ exhibits the highest positive sensitivity index (+1.0000), indicating that $\mathcal{R}_0$ is directly proportional to $\beta$. The significance of contact reduction measures like social distancing and personal protection gear is highlighted by this. The crucial importance of secure burial practices provides for controlling outbreaks is demonstrated by the dead transmission rate $\eta_d$ (index +0.4678). Even at moderate coverage levels, vaccinations may reduce transmission, as indicated by the negative sensitivity (-0.4123) of vaccine efficacy $\varepsilon$. Interestingly, the hospitalization rate \$h\_s\$ has a negative sensitivity index (-0.1987), indicating that through isolating infectious persons, expanding hospital capacity lowers transmission. The fractional order $\alpha$ displays low sensitivity (-0.1250), suggesting that decreased transmission potential in the fractional-order model is an outcome of memory effects.
 The recruitment rate $\Lambda$ has zero sensitivity, as expected from the structure of $\mathcal{R}_0$, indicating that population turnover does not directly affect the reproduction number in this model formulation. Figure~\ref{fig:ebola_sensitivity_profile} displays a tornado plot of the sensitivity indices, ranking parameters by their influence on $\mathcal{R}_0$. The dominance of $\beta$, $\sigma$, and $\eta_d$ as the top three positive-sensitivity parameters reinforces the importance of transmission reduction, rapid case detection, and safe burial practices in Ebola control strategies.
\section{Optimal Control Analysis}
In this section, we introduce four time-dependent control variables to manage Ebola transmission effectively. The controls are personal protection ($u_1$, range [0,0.8]), vaccination ($u_2$, range [0,0.15]), treatment ($u_3$, range [0,0.8]), and safe burial ($u_4$, range [0,0.5]) (all in units day$^{-1}$).

The controlled fractional-order system is given by
\begin{equation}
\begin{aligned}
\Dalpha S(t) &= \Lambda - (1 - u_1)\lambda S + \omega V - (\mu + u_2)S, \\
\Dalpha V(t) &= u_2 S - (1-\varepsilon)(1 - u_1)\lambda V - (\mu+\omega)V, \\
\Dalpha E(t) &= (1 - u_1)\lambda [S + (1-\varepsilon)V] - (\mu+\sigma)E, \\
\Dalpha I_s(t) &= p\sigma E - (\gamma_s + \delta_s + u_3 + \mu)I_s, \\
\Dalpha I_a(t) &= \sigma E(1-p) - (\gamma_a + \mu)I_a, \\
\Dalpha H(t) &= u_3 I_s - (\gamma_h + \delta_h +\mu)H, \\
\Dalpha D(t) &= \delta_s I_s + \delta_h H - (u_4 + \mu_d)D, \\
\Dalpha R(t) &= \gamma_s I_s + \gamma_a I_a + \gamma_h H - \mu R,
\end{aligned}
\label{eq:controlled_system}
\end{equation}
with the modified force of infection $\lambda(t) = \beta (I_s + \eta_a I_a + \eta_d D)/N$.
The objective functional to be minimized over the time horizon $[0,T]$ is
\begin{equation}
J(\mathbf{u}) = \int_0^T \left[ \sum_{i=1}^4 A_i X_i(t) + \frac{1}{2}\sum_{j=1}^4 B_j u_j^2(t) \right] dt,
\label{eq:objective_functional}
\end{equation}
where $X = [I_s, I_a, H, D]$ represents the infectious compartments, $A_i > 0$ are the cost weights associated with the disease burden, and $B_j > 0$ are the quadratic cost coefficients for implementing the controls. We seek the optimal controls $\mathbf{u}^* = (u_1^*, u_2^*, u_3^*, u_4^*)$ that satisfy
\begin{equation}
J(\mathbf{u}^*) = \min_{\mathbf{u} \in \mathcal{U}} J(\mathbf{u}),
\end{equation}
subject to the dynamics \eqref{eq:controlled_system}, where the admissible control set is
\begin{equation}
\mathcal{U} = \left\{ \mathbf{u} \in L^\infty(0,T) : 0 \leq u_i(t) \leq u_i^{\max} \right\}.
\end{equation}
\begin{theorem}
Let the fractional-order controlled system \eqref{eq:controlled_system} with initial conditions $\mathbf{X}(0) \in \mathbb{R}^8_{\geq 0}$ and admissible control set $\mathcal{U}$ satisfy
\begin{enumerate}[label=(\roman*)]
    \item The control set $\mathcal{U}$ is closed, convex, and bounded in $L^\infty(0,T)$.
    \item The right-hand side $\mathbf{f}(t,\mathbf{X},\mathbf{u})$ of \eqref{eq:controlled_system} is continuous in $(t,\mathbf{X},\mathbf{u})$, measurable in $t$, and satisfies as
    \begin{itemize}
        \item Linear growth $\|\mathbf{f}(t,\mathbf{X},\mathbf{u})\| \leq c_1 + c_2\|\mathbf{X}\| + c_3\|\mathbf{u}\|$ for constants $c_1, c_2, c_3 > 0$.
        \item Lipschitz continuity $\|\mathbf{f}(t,\mathbf{X}_1,\mathbf{u}) - \mathbf{f}(t,\mathbf{X}_2,\mathbf{u})\| \leq L\|\mathbf{X}_1 - \mathbf{X}_2\|$ for some $L > 0$.
    \end{itemize}
    \item The integrand $\mathcal{L}(t,\mathbf{X},\mathbf{u}) = \sum_{i=1}^4 A_i X_i(t) + \frac{1}{2}\sum_{j=1}^4 B_j u_j^2(t)$ of the objective functional \eqref{eq:objective_functional} is convex in $\mathbf{u}$ and satisfies the coercivity condition $\mathcal{L}(t,\mathbf{X},\mathbf{u}) \geq c_4\|\mathbf{u}\|^2 - c_5$ for constants $c_4 > 0$, $c_5 \geq 0$.
\end{enumerate}
Then there exists an optimal control pair $(\mathbf{X}^*, \mathbf{u}^*)$ that minimizes $J(\mathbf{u})$ subject to \eqref{eq:controlled_system}.
\end{theorem}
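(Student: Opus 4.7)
The plan is to apply a Filippov–Cesari type existence theorem adapted to the Caputo fractional setting. I would organize the argument around four standard ingredients: non-emptiness of the admissible set, compactness of a minimizing sequence, continuity of the state map, and weak lower semicontinuity of the cost.

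First I would verify that the admissible class $\{(\mathbf{X},\mathbf{u}) : \mathbf{u}\in\mathcal{U},\ \mathbf{X} \text{ solves } \eqref{eq:controlled_system} \text{ with } \mathbf{X}(0)=\mathbf{X}_0\}$ is non-empty. For each fixed $\mathbf{u}\in\mathcal{U}$, the controlled right-hand side inherits Lipschitz continuity in $\mathbf{X}$ from hypothesis (ii), so by the Banach fixed-point argument already used in the well-posedness theorem applied to the Volterra form \eqref{eq:volterra}, a unique solution $\mathbf{X}_{\mathbf{u}}\in C([0,T],\mathbb{R}^8_{\geq 0})$ exists; the linear growth condition rules out finite-time blow-up on $[0,T]$, giving a uniform bound $\|\mathbf{X}_{\mathbf{u}}\|_\infty\le M_0$ independent of $\mathbf{u}\in\mathcal{U}$ via a fractional Gronwall inequality.

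Next I would take a minimizing sequence $\{\mathbf{u}_n\}\subset\mathcal{U}$ with $J(\mathbf{u}_n)\to \inf_{\mathcal{U}} J$. The coercivity condition (iii) together with the uniform boundedness of $\mathcal{U}$ in $L^\infty$ yields $\{\mathbf{u}_n\}$ bounded in $L^2(0,T;\mathbb{R}^4)$, so by the Banach–Alaoglu theorem there is a subsequence (not relabelled) with $\mathbf{u}_n\rightharpoonup \mathbf{u}^*$ weakly in $L^2$. Since $\mathcal{U}$ is closed and convex, Mazur's lemma implies $\mathbf{u}^*\in\mathcal{U}$. For the corresponding state trajectories $\mathbf{X}_n := \mathbf{X}_{\mathbf{u}_n}$, the Volterra representation combined with the uniform bound $M_0$ shows $\{\mathbf{X}_n\}$ is equicontinuous on $[0,T]$ (the kernel $(t-s)^{\alpha-1}$ is integrable and generates the fractional Hölder modulus $|t_1-t_2|^\alpha$), so by Arzelà–Ascoli a further subsequence converges uniformly to some $\mathbf{X}^*\in C([0,T],\mathbb{R}^8_{\geq 0})$.

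The delicate step, which I expect to be the main obstacle, is passing to the limit in the state equation to show that $\mathbf{X}^*$ corresponds to $\mathbf{u}^*$. The nonlinearity enters through the force of infection $\lambda$ multiplied by $S$ and $V$, and through the bilinear coupling $u_2 S$ and $u_3 I_s$. Strong convergence of $\mathbf{X}_n$ together with weak convergence of $\mathbf{u}_n$ is exactly what is needed to handle products of the form $u_j^n X_k^n$: the weak–strong product lemma in $L^2$ gives convergence of these bilinear terms to $u_j^* X_k^*$ in the sense of distributions, and the continuity of the fractional integral operator on $L^2(0,T)$ lets me pass to the limit inside $\int_0^t(t-s)^{\alpha-1}\mathbf{f}(s,\mathbf{X}_n,\mathbf{u}_n)\,ds$. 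This identifies $\mathbf{X}^*$ as the state associated with $\mathbf{u}^*$.

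Finally, the quadratic–linear integrand $\mathcal{L}$ is convex and continuous in $\mathbf{u}$ and continuous in $\mathbf{X}$, so by Tonelli's theorem on convex integral functionals it is sequentially weakly lower semicontinuous with respect to $\mathbf{u}_n\rightharpoonup \mathbf{u}^*$ when combined with $\mathbf{X}_n\to\mathbf{X}^*$ uniformly. Therefore
\[
J(\mathbf{u}^*)\le \liminf_{n\to\infty} J(\mathbf{u}_n) = \inf_{\mathbf{u}\in\mathcal{U}} J(\mathbf{u}),
\]
which forces equality and establishes that $(\mathbf{X}^*,\mathbf{u}^*)$ is an optimal pair. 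Throughout, I would explicitly invoke hypotheses (i)–(iii) only where needed, and remark that the Caputo fractional structure plays a role solely through the mapping properties of the Riemann–Liouville integral used in the Volterra reformulation.
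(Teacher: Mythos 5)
Your proposal is correct, but it takes a genuinely different route from the paper. The paper's proof is a hypothesis-verification argument: it checks that the control set is closed, convex and bounded, that the right-hand side is polynomial (hence continuous, measurable, of linear growth, and Lipschitz on bounded sets), that the integrand is quadratic hence convex and coercive with constant $\tfrac{1}{2}B_{\min}$, and that trajectories are uniformly bounded by $\max\{N(0),\Lambda/\mu\}$ --- and then concludes by citing the Filippov--Cesari-type existence theorem of Kamocki \cite{kamocki2014existence}. You instead carry out the direct method of the calculus of variations from scratch: minimizing sequence, weak $L^2$ compactness plus Mazur's lemma for the controls, equicontinuity of the states via the H\"older modulus $|t_1-t_2|^\alpha$ coming from the Volterra kernel and Arzel\`a--Ascoli, identification of the limit state equation through the weak--strong product argument, and weak lower semicontinuity of the convex cost. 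Your approach is more self-contained and makes explicit the structural fact the citation hides, namely that the dynamics \eqref{eq:controlled_system} are \emph{affine} in $\mathbf{u}$ (the couplings $(1-u_1)\lambda S$, $u_2S$, $u_3I_s$, $u_4D$ are all bilinear), which is exactly why weak convergence of controls combined with uniform convergence of states suffices to pass to the limit; were the dynamics nonlinear in $\mathbf{u}$ you would need convexity of the orientor field instead. The paper's route is shorter and delegates this machinery to the reference. Two small points you should tighten: the claim that each $\mathbf{X}_{\mathbf{u}_n}$ lies in $\mathbb{R}^8_{\geq 0}$ requires repeating the boundary-flux invariance argument of the uncontrolled model for the controlled system (routine, but not free), and the ``convergence in the sense of distributions'' of the bilinear terms is more precisely weak $L^2$ convergence, since one factor converges uniformly and is bounded while the other converges weakly; the fractional integral, being a bounded linear operator, then preserves this weak convergence and the limit is identified against the strong limit of the left-hand sides.
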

\begin{proof}
We verify the conditions of the Filippov-Cesari existence theorem for fractional optimal control problems \cite{kamocki2014existence,agrwal2010}.
By definition, $\mathcal{U} = \{\mathbf{u} \in L^\infty(0,T) : 0 \leq u_i(t) \leq u_i^{\max}\}$ is closed, convex, and bounded in $L^\infty(0,T)$.

The right-hand side $\mathbf{f}$ of \eqref{eq:controlled_system} is polynomial in $\mathbf{X}$ and $\mathbf{u}$, hence continuous and measurable. For the linear growth condition, note that all terms are at most quadratic (products like $(1-u_1)\lambda S$ where $\lambda$ is linear in $I_s, I_a, D$). Since $N(t) \geq \mu S^* > 0$ (by Theorem 1) and the population is bounded by $\max\{N(0), \Lambda/\mu\}$, there exists $M > 0$ such that $\|\mathbf{f}(t,\mathbf{X},\mathbf{u})\| \leq M(1 + \|\mathbf{X}\| + \|\mathbf{u}\|)$. Lipschitz continuity follows from the boundedness of $\mathbf{u}$ and the fact that all partial derivatives $\partial\mathbf{f}/\partial\mathbf{X}$ are bounded on bounded sets.

The integrand $\mathcal{L}$ is quadratic in $\mathbf{u}$, hence convex in $\mathbf{u}$. Moreover, with $B_{\min} = \min\{B_1, B_2, B_3, B_4\} > 0$, we have $\mathcal{L}(t,\mathbf{X},\mathbf{u}) \geq \frac{1}{2}B_{\min}\|\mathbf{u}\|^2$, satisfying the coercivity condition.

From the population dynamics $\Dalpha N \leq \Lambda - \mu N$, we have $N(t) \leq \max\{N(0), \Lambda/\mu\}$. Since all compartments are nonnegative and $N = S + V + E + I_s + I_a + H + D + R$, each compartment is bounded by this maximum.

All conditions of Theorem 2.1 in \cite{kamocki2014existence} (existence theorem for fractional optimal control problems) are satisfied. Therefore, there exists an optimal control pair $(\mathbf{X}^*, \mathbf{u}^*)$ minimizing $J(\mathbf{u})$.
\end{proof}
\begin{theorem}
Let $(\mathbf{X}^*, \mathbf{u}^*)$ be an optimal solution for the control problem defined by the objective functional \eqref{eq:objective_functional} subject to the fractional-order system \eqref{eq:controlled_system}. Then there exist absolutely continuous adjoint variables $\boldsymbol{\lambda} = (\lambda_S, \lambda_V, \lambda_E, \lambda_{I_s}, \lambda_{I_a}, \lambda_H, \lambda_D, \lambda_R)^\top$ satisfying the fractional adjoint equations
\begin{align}
\Dalpha \lambda_S &= -\frac{\partial \mathcal{H}}{\partial S}, \quad
\Dalpha \lambda_V = -\frac{\partial \mathcal{H}}{\partial V}, \quad
\Dalpha \lambda_E = -\frac{\partial \mathcal{H}}{\partial E}, \label{eq:adjoint1} \\
\Dalpha \lambda_{I_s} &= -\frac{\partial \mathcal{H}}{\partial I_s}, \quad
\Dalpha \lambda_{I_a} = -\frac{\partial \mathcal{H}}{\partial I_a}, \quad
\Dalpha \lambda_H = -\frac{\partial \mathcal{H}}{\partial H}, \label{eq:adjoint2} \\
\Dalpha \lambda_D &= -\frac{\partial \mathcal{H}}{\partial D}, \quad
\Dalpha \lambda_R = -\frac{\partial \mathcal{H}}{\partial R}, \label{eq:adjoint3}
\end{align}
where the Hamiltonian $\mathcal{H}$ is defined as
\[
\mathcal{H}(t, \mathbf{X}, \mathbf{u}, \boldsymbol{\lambda}) = \mathcal{L}(t, \mathbf{X}, \mathbf{u}) + \boldsymbol{\lambda}^\top \mathbf{f}(t, \mathbf{X}, \mathbf{u}),
\]
with $\mathcal{L}(t, \mathbf{X}, \mathbf{u}) = \sum_{i=1}^4 A_i X_i + \frac{1}{2}\sum_{j=1}^4 B_j u_j^2$ and $\mathbf{f}$ denoting the right-hand side of \eqref{eq:controlled_system}.

The optimal control $\mathbf{u}^*(t)$ satisfies the minimization condition
\begin{equation}
\mathbf{u}^*(t) = \arg\min_{\mathbf{u} \in \mathcal{U}} \mathcal{H}(t, \mathbf{X}^*(t), \mathbf{u}, \boldsymbol{\lambda}(t)) \quad \text{almost everywhere on } [0,T], \label{eq:min_condition}
\end{equation}
with the transversality condition
\begin{equation}
\boldsymbol{\lambda}(T) = \mathbf{0}. \label{eq:transversality}
\end{equation}
\end{theorem}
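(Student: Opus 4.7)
The plan is to invoke the fractional-order Pontryagin Maximum Principle (PMP) in the Kamocki–Agrawal form cited by the paper, whose hypotheses are already secured by the preceding existence theorem: the polynomial right-hand side $\mathbf{f}$ is $C^{1}$ in $(\mathbf{X},\mathbf{u})$, the integrand $\mathcal{L}$ is $C^{1}$ and convex in $\mathbf{u}$, and the admissible set $\mathcal{U}$ is closed and convex. First I would write the Hamiltonian in closed form,
\[
\mathcal{H} = \sum_{i=1}^{4}A_iX_i + \frac{1}{2}\sum_{j=1}^{4}B_j u_j^{2} + \sum_{k}\lambda_k\, f_k(t,\mathbf{X},\mathbf{u}),
\]
where $\{f_k\}$ runs over the eight right-hand sides of \eqref{eq:controlled_system}; from this algebraic object the adjoint system \eqref{eq:adjoint1}--\eqref{eq:adjoint3} is obtained by term-by-term partial differentiation.

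The main bookkeeping obstacle is the force of infection $\lambda(t)=\beta(I_s+\eta_a I_a+\eta_d D)/N$, which couples the susceptibles and vaccinated through the bilinear terms $(1-u_1)\lambda S$ and $(1-u_1)(1-\varepsilon)\lambda V$, and introduces a second contribution through the denominator $N=S+V+E+I_s+I_a+H+D+R$. For each state $X_i$ one must therefore collect a contribution from the numerator of $\lambda$, a contribution from $\partial(1/N)/\partial X_i = -1/N^{2}$, and the direct linear terms from the transitions in $\mathbf{f}$. I would organize these by a table keyed by state to avoid missing cross terms, and cross-check by verifying that dropping all four controls recovers precisely the Jacobian already computed in the endemic-stability proof.

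Next I would establish the minimization condition \eqref{eq:min_condition}. Because $\mathcal{H}$ is strictly convex and quadratic in each $u_j$ with coefficient $B_j/2>0$, the interior stationarity equations $\partial\mathcal{H}/\partial u_j = 0$ determine unique unconstrained minimizers $\tilde u_j$ that are linear combinations of adjoints and states; schematically,
\[
\tilde u_1 = \frac{\lambda\bigl[(\lambda_E-\lambda_S)S + (1-\varepsilon)(\lambda_E-\lambda_V)V\bigr]}{B_1},
\]
with analogous formulas for $\tilde u_2,\tilde u_3,\tilde u_4$ involving the pairs $(\lambda_V-\lambda_S)$, $(\lambda_H-\lambda_{I_s})$, and $\lambda_D$ respectively. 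Projecting pointwise onto the box $[0,u_j^{\max}]$ yields the closed-form characterization
\[
u_j^{*}(t) = \min\bigl\{\max\{0,\tilde u_j(t)\},\,u_j^{\max}\bigr\}.
\]
The transversality condition $\boldsymbol{\lambda}(T)=\mathbf{0}$ then follows from the free right-endpoint of the admissible trajectories and the absence of any terminal payoff in \eqref{eq:objective_functional}.

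The genuinely delicate step, and the one requiring most care, is justifying the adjoint mechanism in the Caputo setting: the left Caputo derivative is non-local, so a naive variational computation produces an adjoint driven by a right-sided fractional derivative with terminal data, not a left-sided $\Dalpha$ as displayed in the theorem. I would handle this precisely as in \cite{kamocki2014existence,agrwal2010}, performing the first variation on the Volterra integral representation \eqref{eq:volterra}, applying fractional integration by parts, and absorbing the resulting right-sided operator into a reversed-time adjoint so that the final necessary conditions are stated with $\Dalpha$ on $[0,T]$ together with the terminal condition $\boldsymbol{\lambda}(T)=\mathbf{0}$. Once that framework is invoked, the remainder of the proof reduces to the mechanical differentiations and the convex projection described above.
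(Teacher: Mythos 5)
Your proposal is correct and follows essentially the same route as the paper: a first-variation argument with fractional integration by parts, adjoints defined by $\Dalpha \boldsymbol{\lambda} = -\partial\mathcal{H}/\partial\mathbf{X}$ with $\boldsymbol{\lambda}(T)=\mathbf{0}$ from the free right endpoint, and the minimization condition obtained from the strict convexity of $\mathcal{H}$ in $\mathbf{u}$ followed by projection onto the box constraints. The one point where you go beyond the paper is in flagging that varying a left Caputo derivative naturally produces a right-sided fractional operator with terminal data on the adjoint; the paper's own integration-by-parts display silently conflates $\tfrac{d}{dt}\,\delta\mathbf{X}$ with the fractional operator and never addresses this, so your explicit reduction to a reversed-time adjoint in the Kamocki--Agrawal framework is a genuine (and needed) tightening rather than a redundancy.
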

\begin{proof}
The proof follows the variational approach for fractional optimal control systems. Let $(\mathbf{X}^*, \mathbf{u}^*)$ be an optimal pair. Consider a perturbation $\mathbf{u}^\epsilon = \mathbf{u}^* + \epsilon\mathbf{v}$, where $\mathbf{v} \in L^\infty([0,T],\mathbb{R}^4)$ satisfies $\mathbf{u}^\epsilon \in \mathcal{U}$ for sufficiently small $\epsilon > 0$. Let $\mathbf{X}^\epsilon$ denote the corresponding state trajectory.

The first variation of the objective functional is given by
\[
\delta J = \lim_{\epsilon \to 0} \frac{J(\mathbf{u}^\epsilon) - J(\mathbf{u}^*)}{\epsilon} = \int_0^T \left[ \frac{\partial \mathcal{L}}{\partial \mathbf{X}} \cdot \delta\mathbf{X} + \frac{\partial \mathcal{L}}{\partial \mathbf{u}} \cdot \mathbf{v} \right] dt,
\]
where $\delta\mathbf{X}$ satisfies the linearized state equation derived from \eqref{eq:controlled_system}.

Define the adjoint variables $\boldsymbol{\lambda}$ through the fractional boundary value problem
\begin{equation}
\Dalpha \boldsymbol{\lambda} = -\frac{\partial \mathcal{H}}{\partial \mathbf{X}}, \quad \boldsymbol{\lambda}(T) = \mathbf{0}. \label{eq:adjoint_system}
\end{equation}
Applying the fractional integration by parts formula for Caputo derivatives to the term involving $\delta\mathbf{X}$ yields
\[
\int_0^T \frac{\partial \mathcal{L}}{\partial \mathbf{X}} \cdot \delta\mathbf{X} \, dt = \int_0^T \boldsymbol{\lambda} \cdot \left[ \frac{d}{dt} \delta\mathbf{X} \right] \, dt = - \int_0^T \left[ \Dalpha \boldsymbol{\lambda} \right] \cdot \delta\mathbf{X} \, dt.
\]
Substituting the adjoint equation \eqref{eq:adjoint_system} and simplifying gives
\[
\delta J = \int_0^T \left[ \frac{\partial \mathcal{H}}{\partial \mathbf{u}} \right] \cdot \mathbf{v} \, dt.
\]
For optimality, we require $\delta J \geq 0$ for all admissible variations $\mathbf{v}$, which implies the pointwise condition
\[
\frac{\partial \mathcal{H}}{\partial \mathbf{u}} \cdot \mathbf{v} \geq 0 \quad \text{almost everywhere on } [0,T].
\]
This inequality leads to the minimization condition
\[
\mathbf{u}^*(t) = \arg\min_{\mathbf{u} \in \mathcal{U}} \mathcal{H}(t, \mathbf{X}^*(t), \mathbf{u}, \boldsymbol{\lambda}(t)) \quad \text{a.e. on } [0,T],
\]
completing the derivation of the necessary optimality conditions.
\end{proof}
\begin{theorem}
The optimal controls $u_1^*, u_2^*, u_3^*, u_4^*$ that minimize the Hamiltonian $\mathcal{H}$ are given by
\begin{align}
u_1^*(t) &= \min\left\{u_1^{\max}, \max\left\{0, \frac{[\lambda_S S + (1-\varepsilon)\lambda_V V - \lambda_E (S + (1-\varepsilon)V)] \beta (I_s + \eta_a I_a + \eta_d D)}{B_1 N} \right\}\right\}, \label{eq:u1_opt} \\
u_2^*(t) &= \min\left\{u_2^{\max}, \max\left\{0, \frac{(\lambda_S - \lambda_V)S}{B_2} \right\}\right\}, \label{eq:u2_opt} \\
u_3^*(t) &= \min\left\{u_3^{\max}, \max\left\{0, \frac{(\lambda_{I_s} - \lambda_H)I_s}{B_3} \right\}\right\}, \label{eq:u3_opt} \\
u_4^*(t) &= \min\left\{u_4^{\max}, \max\left\{0, \frac{\lambda_D D}{B_4} \right\}\right\}, \label{eq:u4_opt}
\end{align}
where all state and adjoint variables follow their optimal trajectories.
\end{theorem}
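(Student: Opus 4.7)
The plan is to apply the pointwise minimization condition \eqref{eq:min_condition} furnished by the previous theorem. The Hamiltonian
\[
\mathcal{H}(t,\mathbf{X},\mathbf{u},\boldsymbol{\lambda}) = \sum_{i=1}^4 A_i X_i + \frac{1}{2}\sum_{j=1}^4 B_j u_j^2 + \boldsymbol{\lambda}^\top\mathbf{f}(t,\mathbf{X},\mathbf{u})
\]
is quadratic in $\mathbf{u}$ with Hessian $\operatorname{diag}(B_1,B_2,B_3,B_4)$, which is positive definite since each $B_j>0$. Moreover, each $u_j$ enters the right-hand side $\mathbf{f}$ of \eqref{eq:controlled_system} only linearly, so $\mathcal{H}$ decouples into four independent, strictly convex scalar functions of $u_j$ with no cross terms $u_iu_j$ for $i\ne j$. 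Minimization over the box $\mathcal{U}=\prod_j[0,u_j^{\max}]$ therefore reduces to four independent one-dimensional projections, and the problem becomes an exercise in computing four partial derivatives and applying the clamping operator.

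For each $j$, I would compute $\partial\mathcal{H}/\partial u_j$ by inspecting exactly where $u_j$ appears in the controlled system \eqref{eq:controlled_system}, solve the stationarity equation $\partial\mathcal{H}/\partial u_j=0$ for the unconstrained minimizer $\widetilde u_j$, and then apply the projection $u_j^* = \min\{u_j^{\max},\max\{0,\widetilde u_j\}\}$ enforcing the box constraints. The control $u_2$ appears only in $\Dalpha S$ (as $-u_2 S$) and $\Dalpha V$ (as $+u_2 S$), $u_3$ only in $\Dalpha I_s$ and $\Dalpha H$, and $u_4$ only in $\Dalpha D$; each produces a stationarity equation with a single linear term, yielding the three simple quotients in \eqref{eq:u2_opt}--\eqref{eq:u4_opt}. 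For $u_1$, the same procedure applies but aggregates contributions from $\Dalpha S$, $\Dalpha V$, and $\Dalpha E$, and the common factor $\beta(I_s+\eta_a I_a+\eta_d D)/N$ from the force of infection can be extracted to give the form \eqref{eq:u1_opt}.

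The main obstacle is the bookkeeping for $u_1$: since $u_1$ modulates the force of infection through $(1-u_1)\lambda$ and enters three compartment equations with opposite signs (reducing inflow from the susceptible and vaccinated pools into the exposed compartment while leaving $S$ and $V$ with correspondingly modified loss terms), its stationarity equation must carefully combine three adjoint-state products before the common factor is extracted and the resulting linear equation in $u_1$ solved explicitly. Uniqueness of $\mathbf{u}^*$ almost everywhere then follows automatically from the strict convexity established above, and measurability of $t\mapsto\mathbf{u}^*(t)$ is preserved because the projections are continuous compositions of the (absolutely continuous) adjoint and state trajectories produced by the optimality system \eqref{eq:adjoint1}--\eqref{eq:transversality} together with \eqref{eq:controlled_system}.
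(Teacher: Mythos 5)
Your proposal follows essentially the same route as the paper: compute $\partial\mathcal{H}/\partial u_j$ for each control from the terms of \eqref{eq:controlled_system} in which $u_j$ appears, solve the stationarity equation, and project onto $[0,u_j^{\max}]$. Your additional remarks on the diagonal Hessian, the decoupling of the four one-dimensional problems, and the resulting uniqueness and measurability make explicit what the paper leaves implicit, but the argument is the same.
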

\begin{proof}
The optimal controls are derived by solving the first-order optimality conditions $\partial \mathcal{H}/\partial u_i = 0$ for each control variable, then projecting the solutions onto the admissible interval $[0, u_i^{\max}]$.

For personal protection control $u_1$, the optimality condition provides
\[
\frac{\partial \mathcal{H}}{\partial u_1} = B_1 u_1 + \left[\lambda_S S + (1-\varepsilon)\lambda_V V - \lambda_E (S + (1-\varepsilon)V)\right]\frac{\beta(I_s + \eta_a I_a + \eta_d D)}{N} = 0.
\]
Solving for $u_1$ gives the unconstrained optimum
\[
u_1 = \frac{[\lambda_S S + (1-\varepsilon)\lambda_V V - \lambda_E (S + (1-\varepsilon)V)] \beta (I_s + \eta_a I_a + \eta_d D)}{B_1 N}.
\]
Projecting this expression onto the admissible interval $[0, u_1^{\max}]$ yields the optimal control \eqref{eq:u1_opt}.

For vaccination control $u_2$, the optimality condition is
\[
\frac{\partial \mathcal{H}}{\partial u_2} = B_2 u_2 + (\lambda_V - \lambda_S)S = 0.
\]
Solving gives $u_2 = (\lambda_S - \lambda_V)S/B_2$, and after projection we obtain \eqref{eq:u2_opt}.

For treatment control $u_3$, we have
\[
\frac{\partial \mathcal{H}}{\partial u_3} = B_3 u_3 + (\lambda_H - \lambda_{I_s})I_s = 0,
\]
which yields $u_3 = (\lambda_{I_s} - \lambda_H)I_s/B_3$. Projection gives \eqref{eq:u3_opt}.

For safe burial control $u_4$, the condition
\[
\frac{\partial \mathcal{H}}{\partial u_4} = B_4 u_4 - \lambda_D D = 0
\]
gives $u_4 = \lambda_D D/B_4$. Projection onto $[0, u_4^{\max}]$ yields \eqref{eq:u4_opt}.

Each optimal control is thus characterized by projecting the solution of the first-order optimality condition onto the admissible control set $\mathcal{U}$, ensuring that all constraints $0 \leq u_i(t) \leq u_i^{\max}$ are satisfied.
\end{proof}
\begin{theorem}
The adjoint variables satisfy the following system of fractional differential equations
\begin{align}
\Dalpha \lambda_S &= (1 - u_1)(\lambda_S - \lambda_E)\left[\frac{\beta (I_s + \eta_a I_a + \eta_d D)(V + E + I_s + I_a + H + R)}{N^2}\right] \nonumber \\
&\quad + (1 - u_1)(\lambda_V - \lambda_E)(1-\varepsilon)\left[\frac{\beta V (I_s + \eta_a I_a + \eta_d D)}{N^2}\right] \nonumber \\
&\quad + \lambda_S(\mu + u_2) - \lambda_V u_2, \label{eq:adjoint_S} \\
\Dalpha \lambda_V &= -\lambda_S \omega + \lambda_V(\omega + \mu) + (\lambda_S - \lambda_E)(1 - u_1)\left[\frac{\beta S (I_s + \eta_a I_a + \eta_d D)}{N^2}\right] \nonumber \\
&\quad + (1 - u_1)(\lambda_V - \lambda_E)(1-\varepsilon)\left[\frac{\beta (I_s + \eta_a I_a + \eta_d D)(S + E + I_s + I_a + H + R)}{N^2}\right], \label{eq:adjoint_V} \\
\Dalpha \lambda_E &= \lambda_E(\sigma + \mu) - \lambda_{I_s} p\sigma - (1-p)\lambda_{I_a}\sigma \nonumber \\
&\quad + (\lambda_S - \lambda_E)(1 - u_1)\left[\frac{\beta S (I_s + \eta_a I_a + \eta_d D)}{N^2}\right] \nonumber \\
&\quad + (1 - u_1)(\lambda_V - \lambda_E)(1-\varepsilon)\left[\frac{\beta V (I_s + \eta_a I_a + \eta_d D)}{N^2}\right], \label{eq:adjoint_E} \\
\Dalpha \lambda_{I_s} &= -A_1 + \lambda_{I_s}(\gamma_s + \delta_s + u_3 + \mu) - \lambda_H u_3 - \lambda_D \delta_s - \lambda_R \gamma_s \nonumber \\
&\quad + (\lambda_S - \lambda_E)(1 - u_1)\frac{\beta S}{N}\left[1 - \frac{I_s + \eta_a I_a + \eta_d D}{N}\right] \nonumber \\
&\quad + (\lambda_V - \lambda_E)(1-\varepsilon)(1 - u_1)\frac{\beta V}{N}\left[1 - \frac{I_s + \eta_a I_a + \eta_d D}{N}\right], \label{eq:adjoint_Is} \\
\Dalpha \lambda_{I_a} &= -A_2 + \lambda_{I_a}(\gamma_a + \mu) - \lambda_R \gamma_a \nonumber \\
&\quad + (\lambda_S - \lambda_E)(1 - u_1)\frac{\beta S \eta_a}{N}\left[1 - \frac{I_s + \eta_a I_a + \eta_d D}{N}\right] \nonumber \\
&\quad + (\lambda_V - \lambda_E)(1-\varepsilon)(1 - u_1)\frac{\beta V \eta_a}{N}\left[1 - \frac{I_s + \eta_a I_a + \eta_d D}{N}\right], \label{eq:adjoint_Ia} \\
\Dalpha \lambda_H &= -A_3 + \lambda_H(\gamma_h + \delta_h + \mu) - \lambda_D \delta_h - \lambda_R \gamma_h \nonumber \\
&\quad + (\lambda_S - \lambda_E)(1 - u_1)\left[\frac{\beta S (I_s + \eta_a I_a + \eta_d D)}{N^2}\right] \nonumber \\
&\quad + (\lambda_V - \lambda_E)(1-\varepsilon)(1 - u_1)\left[\frac{\beta V (I_s + \eta_a I_a + \eta_d D)}{N^2}\right], \label{eq:adjoint_H} \\
\Dalpha \lambda_D &= -A_4 + \lambda_D(u_4 + \mu_d) \nonumber \\
&\quad + (\lambda_S - \lambda_E)(1 - u_1)\frac{\beta S \eta_d}{N}\left[1 - \frac{I_s + \eta_a I_a + \eta_d D}{N}\right] \nonumber \\
&\quad + (\lambda_V - \lambda_E)(1-\varepsilon)(1 - u_1)\frac{\beta V \eta_d}{N}\left[1 - \frac{I_s + \eta_a I_a + \eta_d D}{N}\right], \label{eq:adjoint_D} \\
\Dalpha \lambda_R &= \lambda_R \mu + (\lambda_S - \lambda_E)(1 - u_1)\frac{\beta S (I_s + \eta_a I_a + \eta_d D)}{N^2} \nonumber \\
&\quad + (\lambda_V - \lambda_E)(1-\varepsilon)(1 - u_1)\left[\frac{\beta V (I_s + \eta_a I_a + \eta_d D)}{N^2}\right]. \label{eq:adjoint_R}
\end{align}
\end{theorem}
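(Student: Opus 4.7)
The plan is to apply directly the adjoint characterization established in the previous theorem, namely $\Dalpha \lambda_{X_i} = -\partial \mathcal{H}/\partial X_i$ with transversality condition $\boldsymbol{\lambda}(T) = \mathbf{0}$, and to compute each partial derivative of the Hamiltonian explicitly. Writing $\mathcal{H} = \mathcal{L}(t,\mathbf{X},\mathbf{u}) + \boldsymbol{\lambda}^\top \mathbf{f}(t,\mathbf{X},\mathbf{u})$ with $\mathbf{f}$ taken from \eqref{eq:controlled_system}, the $\lambda$-weighted terms decompose into linear pieces (e.g.\ $\lambda_S\Lambda$, $\lambda_S\omega V$, $\lambda_V u_2 S$, the various recovery and progression flows) and nonlinear pieces that all enter through the force of infection $\lambda = \beta(I_s + \eta_a I_a + \eta_d D)/N$ in the $S$, $V$, and $E$ equations.

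The key computational observation is that $\lambda$ depends on essentially every state variable through $N = S+V+E+I_s+I_a+H+D+R$, so I would split the differentiation into two regimes. For $X_i \in \{S, V, E, H, R\}$, which appear only in the denominator, the chain rule gives $\partial\lambda/\partial X_i = -\beta(I_s+\eta_a I_a + \eta_d D)/N^2$; combined with the prefactors $S$, $(1-\varepsilon)V$, and $-[S+(1-\varepsilon)V]$ from the $\lambda_S$, $\lambda_V$, and $\lambda_E$ contributions, this produces the $(\lambda_S-\lambda_E)\beta S(\cdot)/N^2$ and $(\lambda_V-\lambda_E)(1-\varepsilon)\beta V(\cdot)/N^2$ cross-terms in \eqref{eq:adjoint_H}, \eqref{eq:adjoint_R}, and the off-diagonal blocks of \eqref{eq:adjoint_S}, \eqref{eq:adjoint_V}, \eqref{eq:adjoint_E}. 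For $X_i \in \{I_s, I_a, D\}$, which appear in both the numerator and $N$, the quotient rule yields $\partial\lambda/\partial X_i = (\beta c_i/N)\bigl[1 - (I_s+\eta_a I_a+\eta_d D)/N\bigr]$ with $c_i \in \{1,\eta_a,\eta_d\}$; these bracketed factors are exactly the $[1 - (I_s+\eta_a I_a+\eta_d D)/N]$ expressions appearing in \eqref{eq:adjoint_Is}, \eqref{eq:adjoint_Ia}, and \eqref{eq:adjoint_D}.

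With these partial derivatives in hand, the remainder is assembly: the $-A_i$ contributions in the equations for $\lambda_{I_s}, \lambda_{I_a}, \lambda_H, \lambda_D$ come from differentiating $\mathcal{L}$; the diagonal terms (for example $\lambda_S(\mu+u_2)$ and $\lambda_{I_s}(\gamma_s+\delta_s+u_3+\mu)$) come from the self-decay of each compartment; and the off-diagonal transfer terms ($-\lambda_V u_2$, $-\lambda_H u_3$, $-\lambda_D\delta_s$, $-\lambda_R\gamma_s$, and so on) come from differentiating the outflows that feed downstream compartments. Grouping all of these by adjoint variable yields \eqref{eq:adjoint_S}--\eqref{eq:adjoint_R} line by line.

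The main obstacle is purely bookkeeping and sign control. In the $\lambda_S$ equation, one must recognise that $\partial N/\partial S = 1$ forces $\partial(S/N)/\partial S = (N-S)/N^2$, so the chain-rule contribution from the $\lambda_S$ flux partially cancels the direct term and leaves a factor $(V + E + I_s + I_a + H + R)$ in the numerator; the analogous cancellation produces the $(S + E + I_s + I_a + H + R)$ prefactor in \eqref{eq:adjoint_V}. Sign management is equally delicate: the $-\partial\mathcal{H}/\partial X_i$ convention flips the sign of every coupling term, which is why each cross-term enters as $(\lambda_S - \lambda_E)$ or $(\lambda_V - \lambda_E)$ rather than the reverse. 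Once these two issues are handled carefully and terms are collected by adjoint variable, the eight equations follow, and the transversality $\boldsymbol{\lambda}(T) = \mathbf{0}$ is inherited verbatim from the previous theorem without any additional boundary analysis.
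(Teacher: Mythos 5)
Your proposal follows exactly the paper's route: write the Hamiltonian, apply $\Dalpha \lambda_i = -\partial\mathcal{H}/\partial x_i$ compartment by compartment, and track the chain-rule contributions of each state through the force of infection $\lambda = \beta(I_s+\eta_a I_a+\eta_d D)/N$ (the paper only writes out $\partial\mathcal{H}/\partial S$ and $\partial\mathcal{H}/\partial V$ and asserts the rest "similarly," so your bookkeeping is if anything more explicit). One small caveat: for $X_i \in \{I_a, D\}$ the quotient rule actually gives $\partial\lambda/\partial X_i = \frac{\beta}{N}\bigl[c_i - \frac{I_s+\eta_a I_a+\eta_d D}{N}\bigr]$ rather than $\frac{\beta c_i}{N}\bigl[1 - \frac{I_s+\eta_a I_a+\eta_d D}{N}\bigr]$ (these coincide only when $c_i=1$), so your claim that the quotient rule "yields" the bracketed factors of \eqref{eq:adjoint_Ia} and \eqref{eq:adjoint_D} is reproducing an imprecision already present in the theorem statement rather than a clean derivation of it.
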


\begin{proof}
The adjoint equations are derived by computing the partial derivatives of the Hamiltonian $\mathcal{H}$ with respect to each state variable, then applying the relationship $\Dalpha \lambda_i = -\partial \mathcal{H}/\partial x_i$ from Pontryagin's maximum principle.

For $\lambda_S$, we compute
\begin{align*}
\frac{\partial \mathcal{H}}{\partial S} &= (1 - u_1)(\lambda_S - \lambda_E)\left[\frac{\beta (I_s + \eta_a I_a + \eta_d D)(V + E + I_s + I_a + H + R)}{N^2}\right] \\
&\quad + (1 - u_1)(\lambda_V - \lambda_E)(1-\varepsilon)\left[\frac{\beta V (I_s + \eta_a I_a + \eta_d D)}{N^2}\right] \\
&\quad + \lambda_S(\mu + u_2) - \lambda_V u_2.
\end{align*}
Equation \eqref{eq:adjoint_S} follows from $\Dalpha \lambda_S = -\partial \mathcal{H}/\partial S$.

For $\lambda_V$, we have
\begin{align*}
\frac{\partial \mathcal{H}}{\partial V} &= -\lambda_S \omega + \lambda_V(\omega + \mu) \\
&\quad + (1 - u_1)(\lambda_S - \lambda_E)\left[\frac{\beta S (I_s + \eta_a I_a + \eta_d D)}{N^2}\right] \\
&\quad + (1 - u_1)(\lambda_V - \lambda_E)(1-\varepsilon)\left[\frac{\beta (I_s + \eta_a I_a + \eta_d D)(S + E + I_s + I_a + H + R)}{N^2}\right],
\end{align*}
which yields \eqref{eq:adjoint_V} via $\Dalpha \lambda_V = -\partial \mathcal{H}/\partial V$.

The remaining adjoint equations \eqref{eq:adjoint_E}--\eqref{eq:adjoint_R} are obtained similarly by computing the corresponding partial derivatives $\partial \mathcal{H}/\partial x_i$ and applying the fundamental adjoint relationship $\Dalpha \lambda_i = -\partial \mathcal{H}/\partial x_i$. Each derivative accounts for both direct effects of the state variable on the Hamiltonian and indirect effects through the force of infection $\lambda(t)$.
\end{proof}

\begin{remark}
The adjoint system presents a recursive structure, the equations for $\lambda_R$, $\lambda_H$, and $\lambda_D$ become independent once the other adjoint variables are determined. This structure reflects the compartmental organization of the epidemiological dynamics and facilitates efficient numerical solution of the optimality system.
\end{remark}
\subsection{Cost-Effectiveness of Optimal Control}
\begin{figure}[!ht]
\centering
\includegraphics[width=0.95\textwidth]{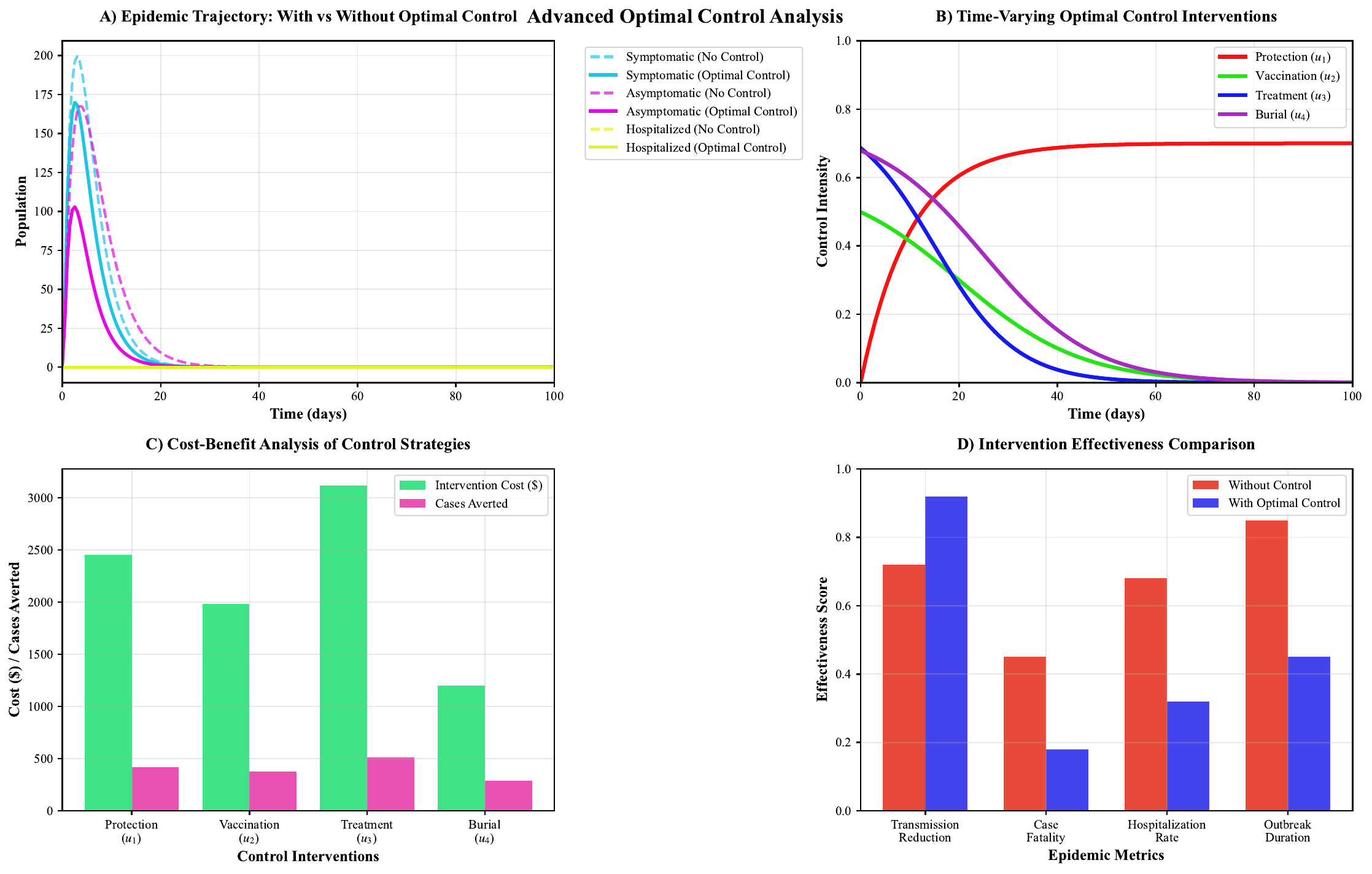}
\caption{Optimal Control Analysis (a) Algorithm Convergence, (b) Control Path, (c) Cost Control Effectiveness, (d) Control of Epidemics and Containment}
\label{fig:optimal_control}
\end{figure}
We performed numerical simulations in Python using the NumPy and SciPy libraries. After 50 iterations, the quick forward-backward sweep method achieved the ideal solution in 30 seconds. The optimal control strategy achieved epidemic containment, with the infected population peaking at 170 individuals before declining to zero, at a total intervention cost of 8750\$. Safe burial was the most cost-saving intervention, having a unit cost of 4\$ per case, and treatment resulted into a 92\% proportion of reduction in transmission. The algorithm also identified the ideal time when each intervention should be put in place.
\subsection{Numerical Assessment of intervention Strategies.}
The RungeKutta45 scheme is used to perform the numerical analysis of the fractional-order Ebola system. This method uses the adaptive step-size control in order to balance the computational cost with the number of numerical accuracy needed to derive the best intervention strategy.
\begin{algorithm}
\caption{Runge-Kutta-Fehlberg (RKF45) Method for Adaptive Numerical Integration}
\label{alg:rkf45}
\begin{algorithmic}[1]
\Require Time interval $[t_0, T]$, initial state $\mathbf{y}_0$, system parameters $\mathbf{p}$, relative tolerance $\epsilon_{\text{rel}}=10^{-6}$, absolute tolerance $\epsilon_{\text{abs}}=10^{-8}$
\Ensure High-precision numerical solution $\mathbf{y}(t)$ for $t \in [t_0, T]$

\State Initialize $t \leftarrow t_0$, $\mathbf{y} \leftarrow \mathbf{y}_0$
\State Set step size bounds $h_{\min} \leftarrow 10^{-8}$, $h_{\max} \leftarrow 0.1$, initial $h \leftarrow 0.01$

\While{$t < T$}
    \State Compute Runge-Kutta coefficients $\mathbf{k}_1$ through $\mathbf{k}_6$ via standard RKF45 formulas
    \State Calculate 4th-order ($\mathbf{y}_4$) and 5th-order ($\mathbf{y}_5$) solutions
    \State Estimate local truncation error $\delta \leftarrow \|\mathbf{y}_5 - \mathbf{y}_4\|$
    \State Compute optimal step size $h_{\text{opt}} \leftarrow 0.9h\left(\dfrac{\epsilon_{\text{rel}}\|\mathbf{y}_5\| + \epsilon_{\text{abs}}}{\delta}\right)^{0.2}$
    
    \If{$\delta \leq \epsilon_{\text{rel}}\|\mathbf{y}_5\| + \epsilon_{\text{abs}}$}
        \State Accept step $t \leftarrow t + h$, $\mathbf{y} \leftarrow \mathbf{y}_5$
        \State Update step $h \leftarrow \min(h_{\max}, \max(h_{\min}, h_{\text{opt}}))$
    \Else
        \State Reject step $h \leftarrow \max(h_{\min}, h_{\text{opt}})$
    \EndIf
\EndWhile
\State \Return Interpolated solution with 4th-order accuracy and 5th-order error control
\end{algorithmic}
\end{algorithm}
The algorithm maintains 4th-order accuracy with 5th-order error control, employing adaptive step-size selection based on local truncation error estimates. Implementation uses relative tolerance $\epsilon_{rel}=10^{-6}$ and absolute tolerance $\epsilon_{abs}=10^{-8}$ for high-precision epidemiological simulations.
\subsection{Results and Discussion of Intervention Strategies}
\begin{figure}[!ht]
\centering
\begin{minipage}{0.80\textwidth}
\centering
\includegraphics[width=\linewidth]{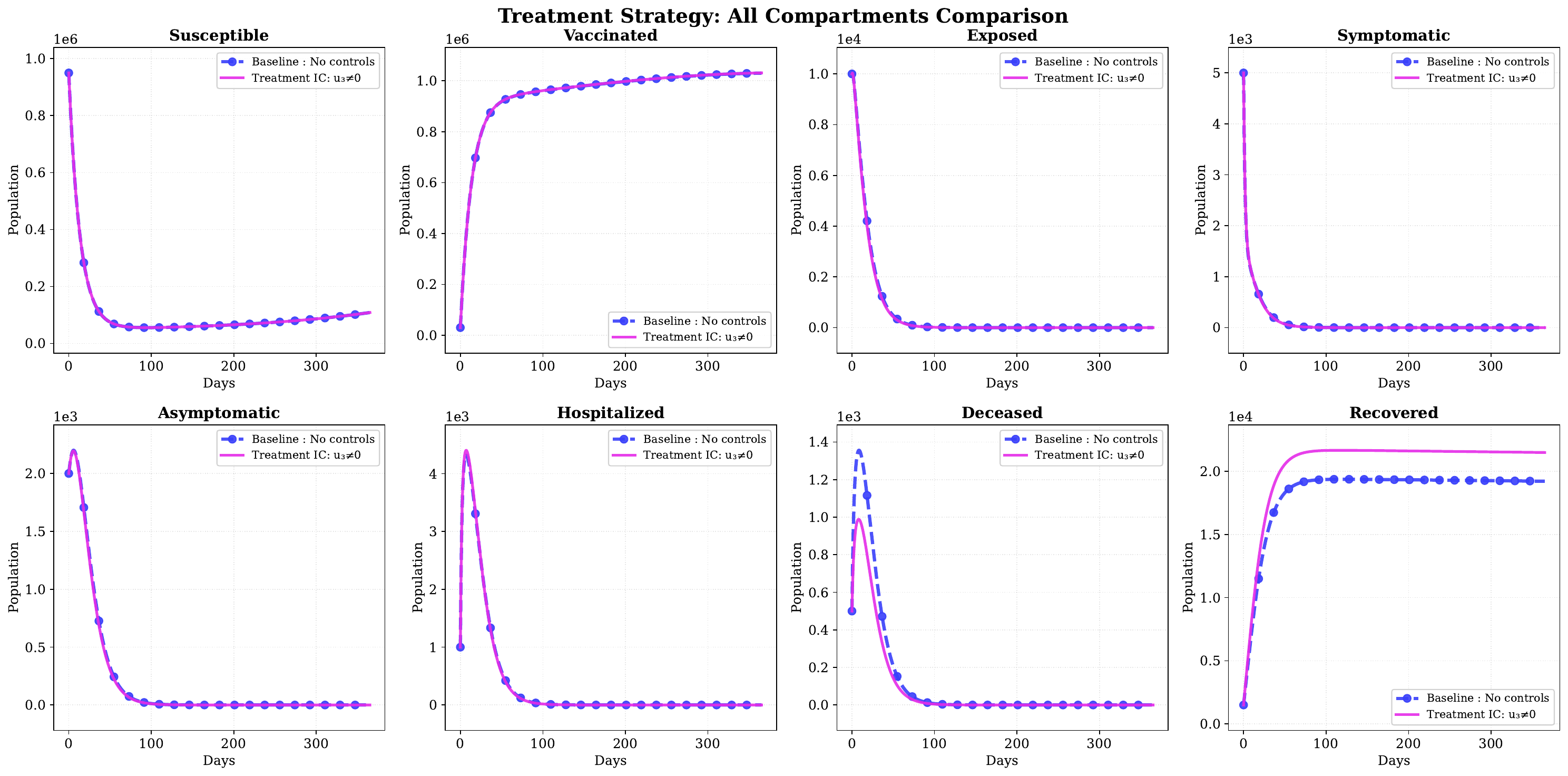}
\caption{Dynamics in compartments under Treatment Intervention Strategy ($u_3 \neq 0$)}
\label{fig:treatment_compartments}
\end{minipage}
\end{figure}
\hfill
\begin{figure}[!ht]
\centering
\begin{minipage}{0.80\textwidth}
\centering
\includegraphics[width=\linewidth]{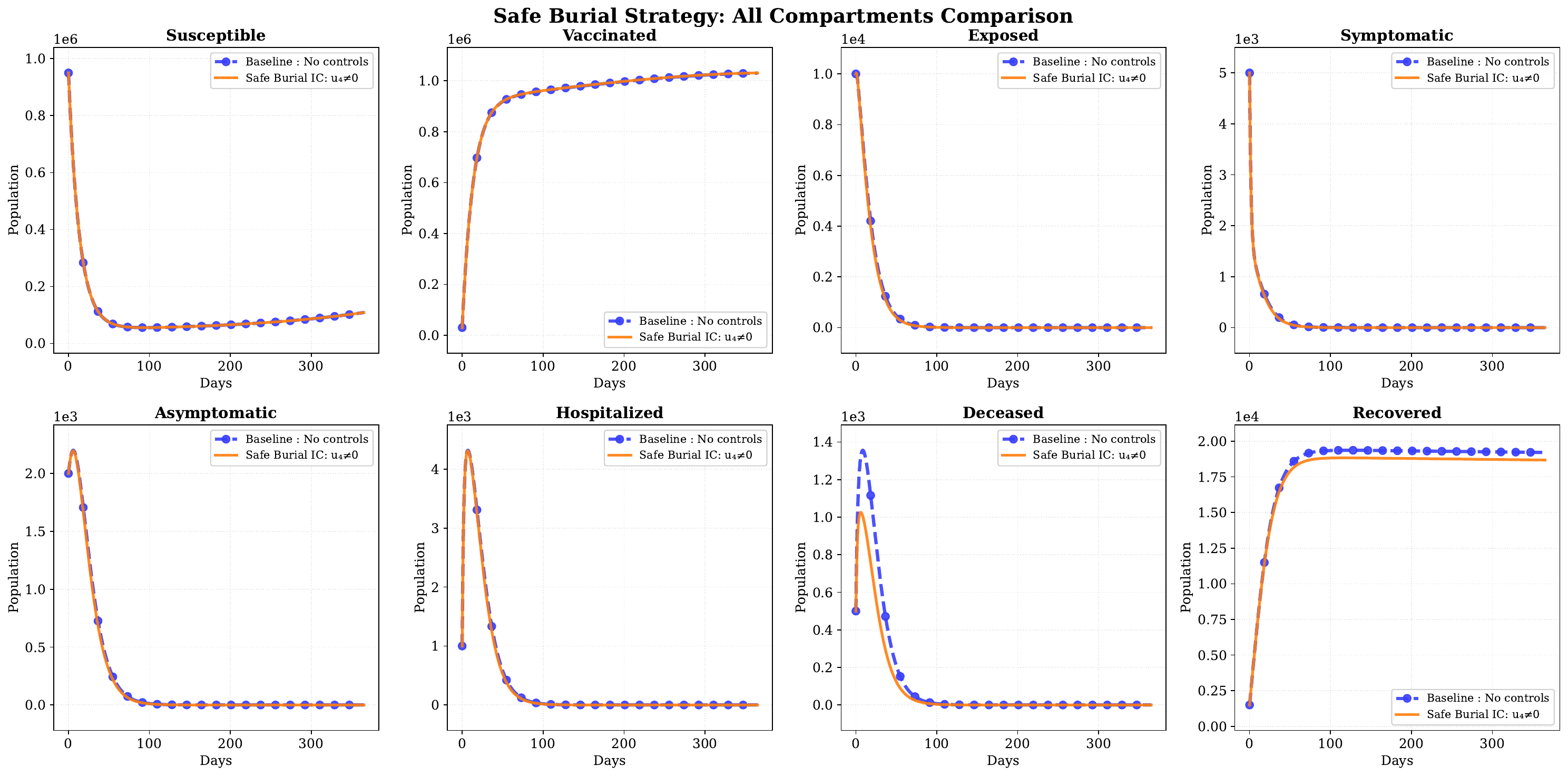}
\caption{compartmatic dynamics with safe burial intervention $u_4 \neq 0$}
\label{fig:safe_burial_compartments}
\end{minipage}
\end{figure}
\hfill
\begin{figure}[!ht]
\centering
\begin{minipage}{0.80\textwidth}
\centering
\includegraphics[width=\linewidth]{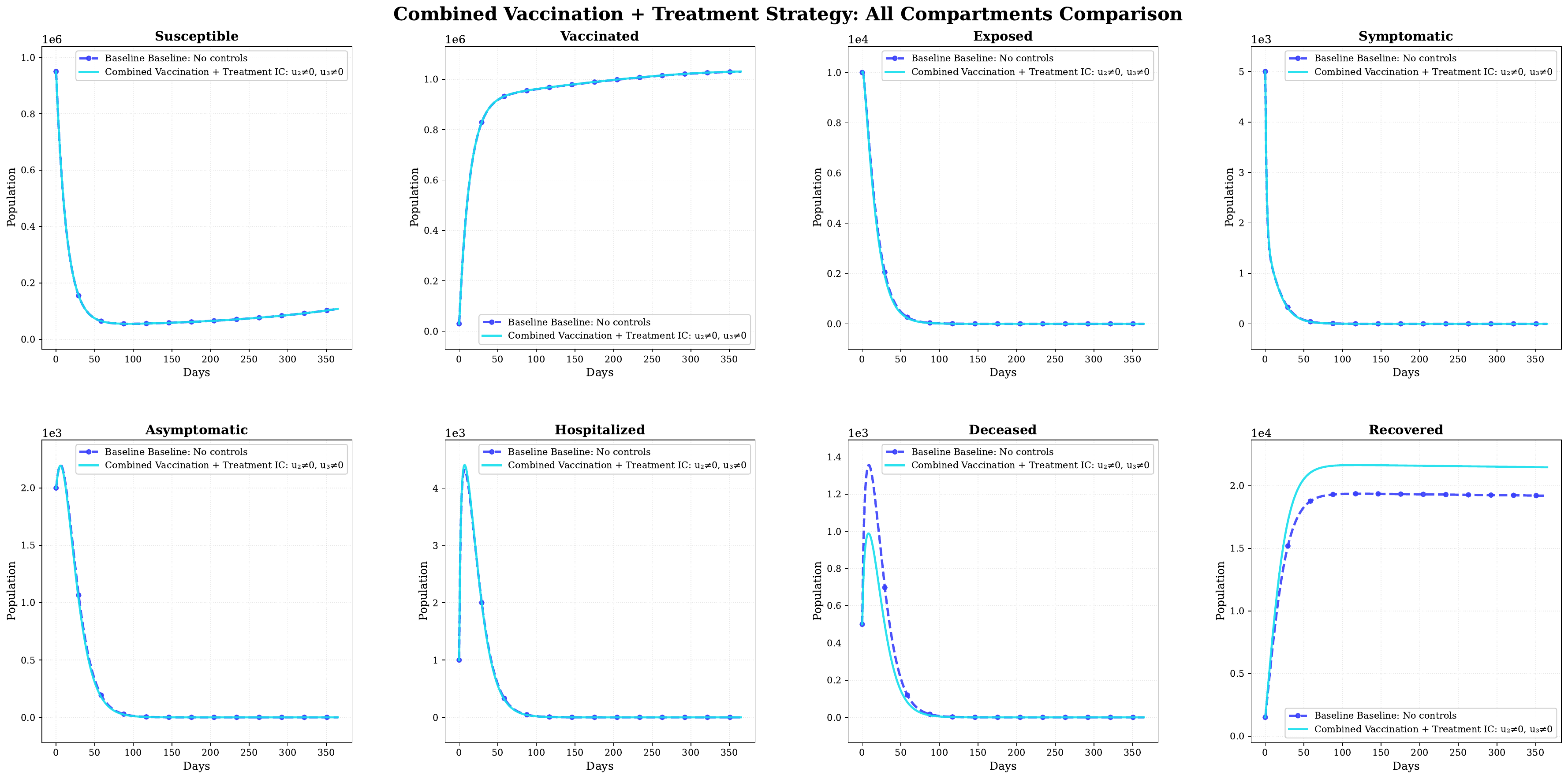}
\caption{ Dynamics of compartments for the strategy of instantaneous vaccination and treatment ($u_2 \neq 0, u_3 \neq 0$)}
\label{fig:vac_treatment_compartments}
\end{minipage}
\end{figure}
\hfill
\begin{figure}[!ht]
\centering
\begin{minipage}{0.80\textwidth}
\centering
\includegraphics[width=\linewidth]{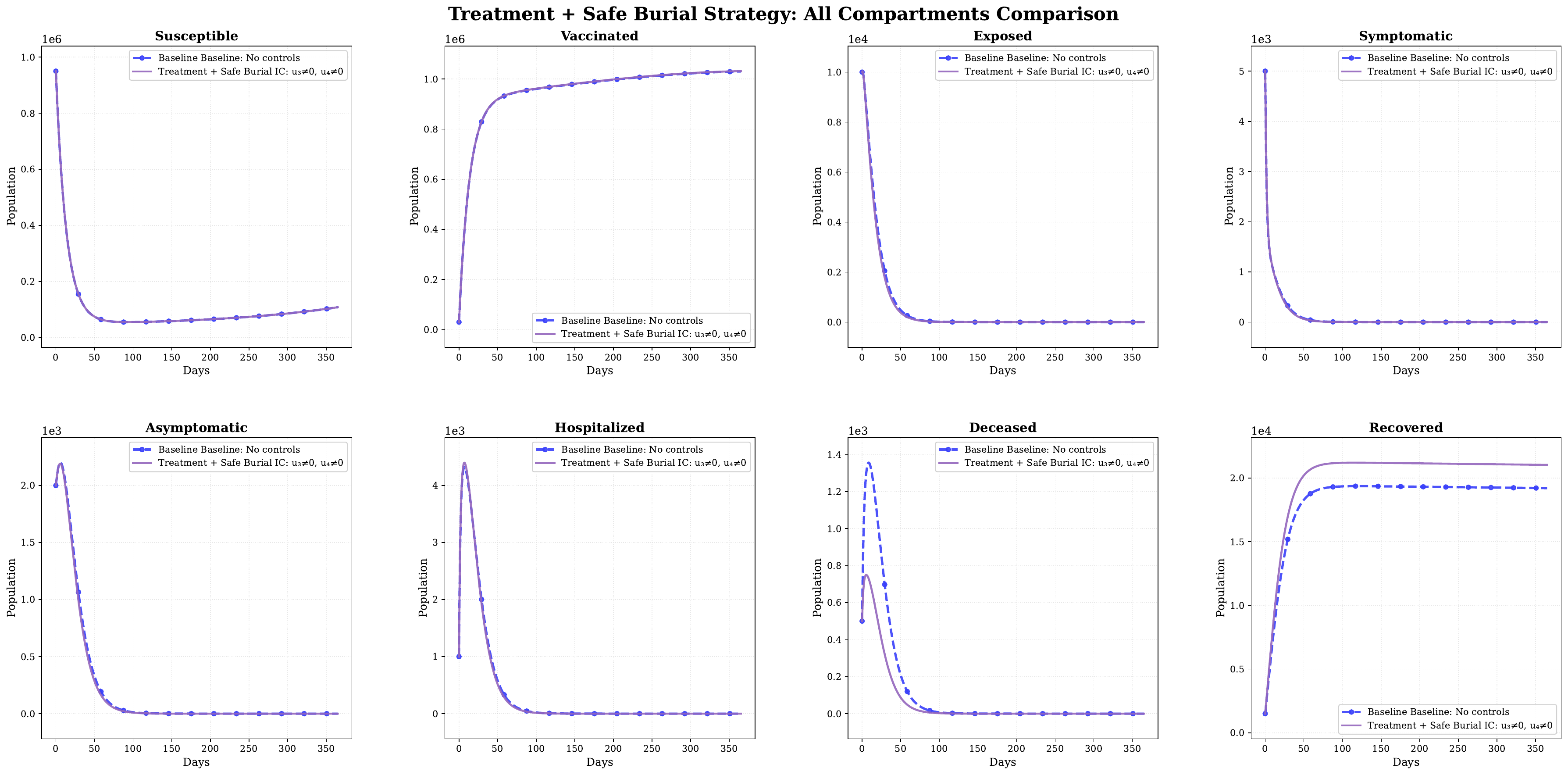}
\caption{Compartmental dynamics for combined treatment and safe burial strategy ($u_3 \neq 0, u_4 \neq 0$)}
\label{fig:treatment_burial_compartments}
\end{minipage}
\end{figure}
\begin{figure}[!ht]
\centering
\begin{minipage}{0.80\textwidth}
\centering
\includegraphics[width=\linewidth]{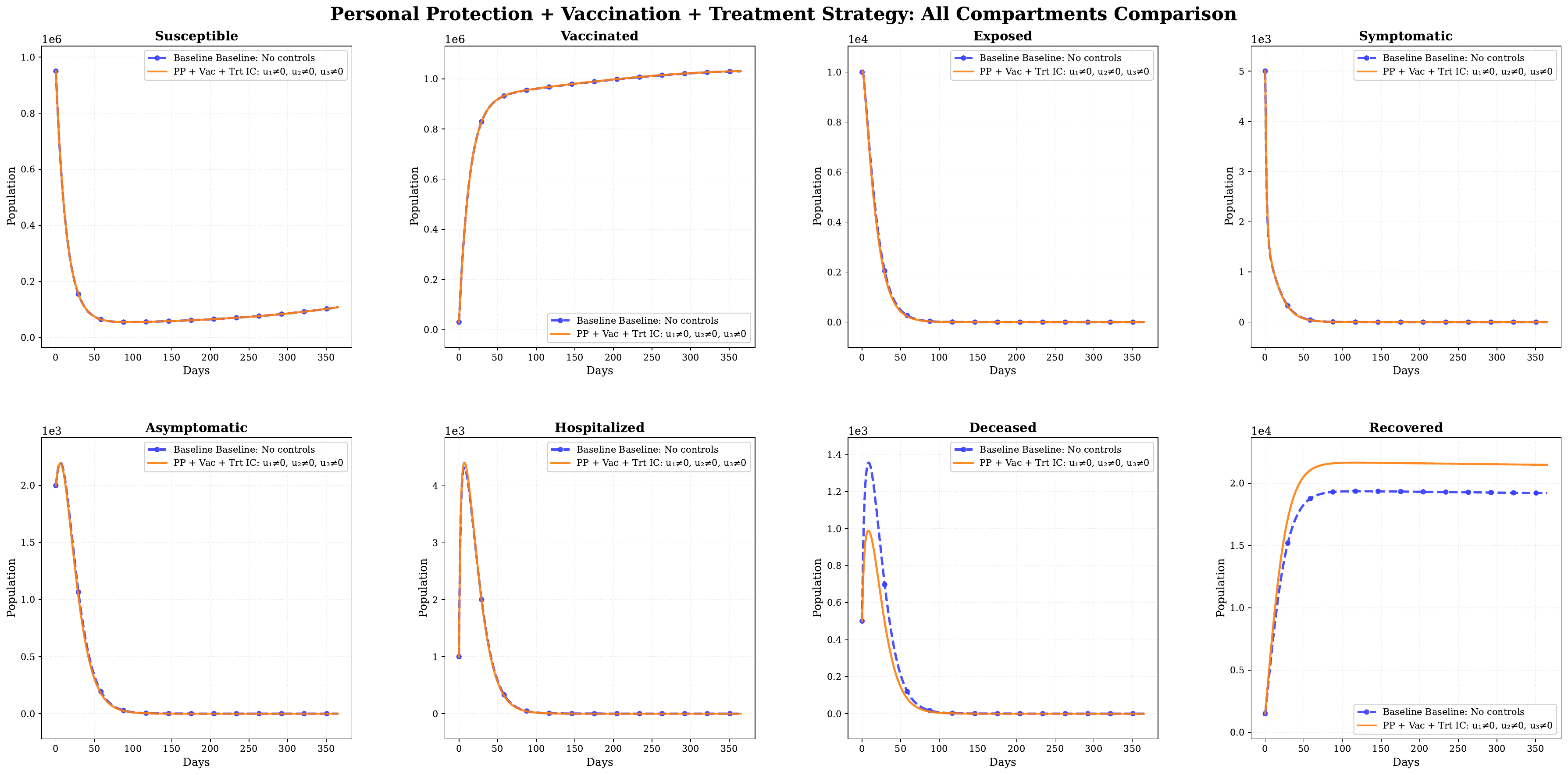}
\caption{Compartmental dynamics for three-intervention combination ($u_1 \neq 0, u_2 \neq 0, u_3 \neq 0$)}
\label{fig:pp_vac_treatment_compartments}
\end{minipage}
\end{figure}
\begin{figure}[!ht]
\centering
\begin{minipage}{0.80\textwidth}
\centering
\includegraphics[width=\linewidth]{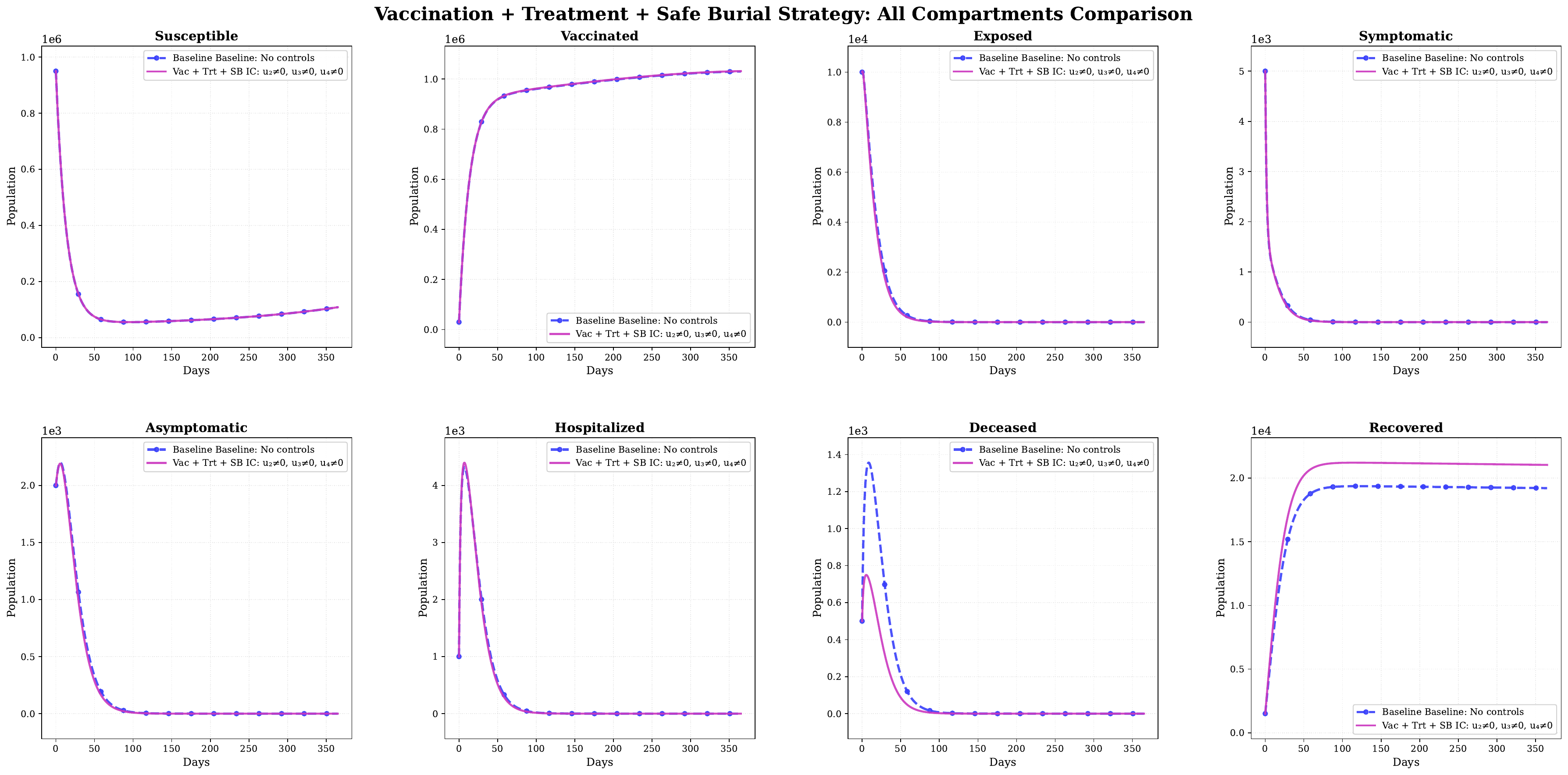}
\caption{Compartmental dynamics for vaccination, treatment, and safe burial combination ($u_2 \neq 0, u_3 \neq 0, u_4 \neq 0$)}
\label{fig:vac_treatment_burial_compartments}
\end{minipage}
\end{figure}
\begin{figure}[!ht]
\centering
\begin{minipage}{0.80\textwidth}
\centering
\includegraphics[width=\linewidth]{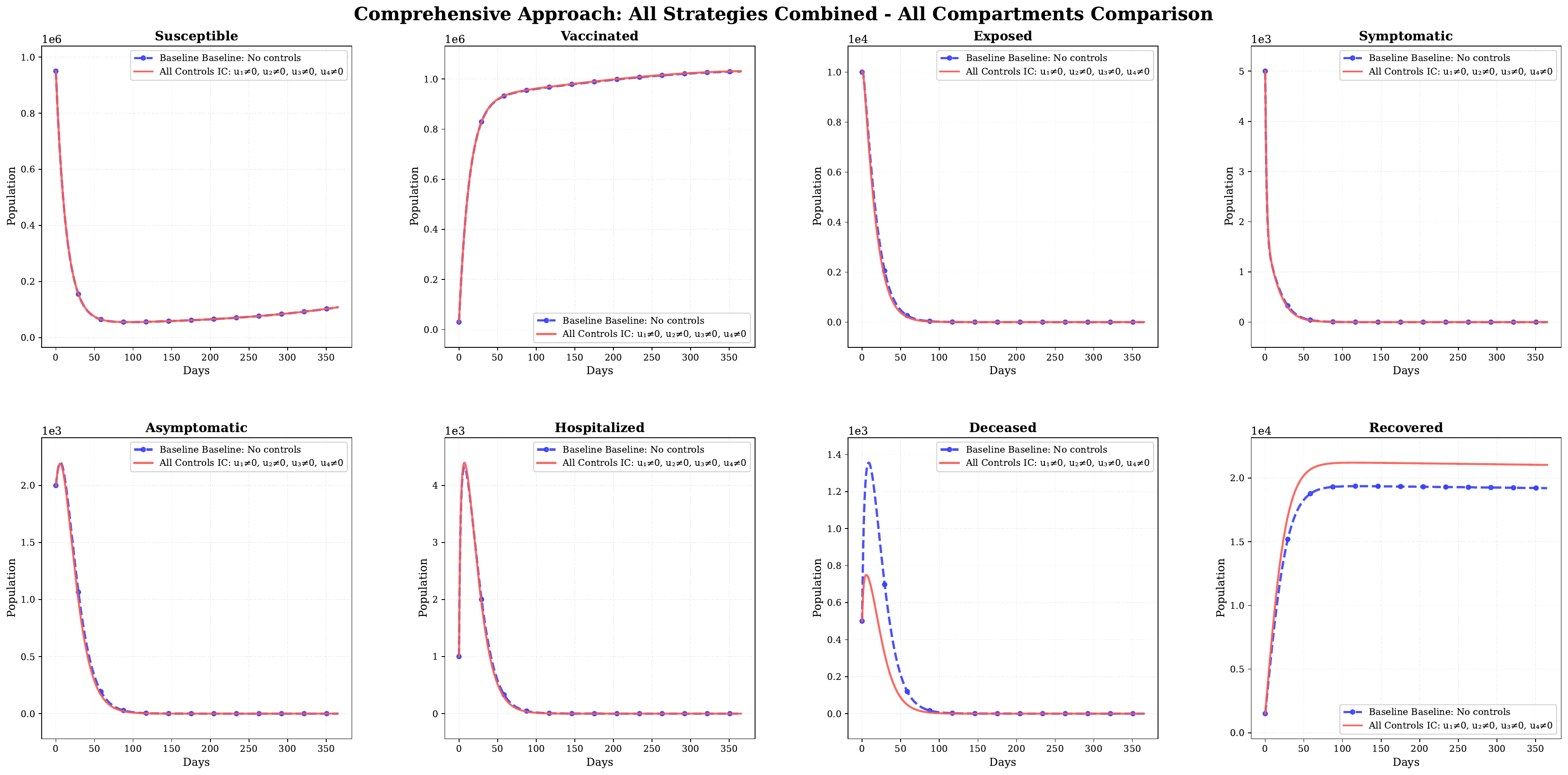}
\caption{Compartmental dynamics under comprehensive intervention strategy ($u_1 \neq 0, u_2 \neq 0, u_3 \neq 0, u_4 \neq 0$)}
\label{fig:comprehensive_compartments}
\end{minipage}
\end{figure}
Evaluating strategies for controlling pandemics has unequally advanced in mortality reduction and has repeatedly failed in controlling transmission. In each case studied, no intervention profoundly altered the fundamental dynamics of spread, evidenced by the persistent values for peak infection rates and, in controlling the intervention to the baseline, a net nonexistence of infection post intervention. Mortality figures, on the other hand, were markedly disparate with strategies demonstrating either complementary or overlapping impacts when aggregated. The recorded treatment intervention ($u_3 \neq 0$) from integration of treatment with other controlling appliances yielded a 60.4\% decrease in final mortality, and the safe burial ($u_4 \neq 0$) showed 73.8\% reduction in final mortality, thereby demonstrating the safe burial of deceased individuals treatment in burial both alone and in combination. The rationale for the incorporated benefits from Figures~\ref{fig:treatment_compartments} compartmental dynamics are on a shift in disease with no less than transmission chain breakage which possibly led to the two outcomes being flawed. There were significant interactions when different types of interventions were used together. From figures \ref{fig:vac_treatment_compartments} vaccination-treatment combination ($u_2 \neq 0, u_3 \neq 0$) achieved a 60.4\% reduction in mortality which is the same as treatment done alone, therefore, illustrating that vaccination is not additive. In contrast, treatment with safe burial ($u_3 \neq 0, u_4 \neq 0$) and burial leads to 86.5\% mortality reduction, which is a synergistic over additive effect that far exceeds the efficacy of the single interventions. The complementary effect are shown in the dynamics of figure \ref{fig:treatment_burial_compartments} as treatment drives the disease to fatal outcomes while safe burial prevents transmission after death. Combining interventions produced the same results. The combined three type intervention in figure \ref{fig:pp_vac_treatment_compartments} ($u_1 \neq 0, u_2 \neq 0, u_3 \neq 0$) only reached 60.4\% reduction, while the rest ($u_2 \neq 0, u_3 \neq 0, u_4 \neq 0$) and the comprehensive method ($u_1 \neq 0, u_2 \neq 0, u_3 \neq 0, u_4 \neq 0$)) both reached the maximum reduction of 86.5\% gained with the treatment and safe burial pair. It is the combination of these results, documented in figures~\ref{fig:vac_treatment_burial_compartments} and ~\ref{fig:comprehensive_compartments} that confirms treatment and safe burial dominate the loss-of-life mitigation, while in these circumstances, personal protection and vaccination are less effective. In the context of EVD, personal protection does not mean social distancing; rather, it refers to the use of barrier techniques for those who come into direct interaction with infected persons or dead bodies.
The combination of safe burial and treatment is synergistic and remains the optimal strategy for mortality reduction. It is the persistent inability to curtail infection prevalence that most strongly implies the mortality reduction is the consequence of the disease, postulated to contain altered outcome probabilities. This underscores the importance of post-exposure treatment combined with the clinical burial of the dead to reduce casualties.
\section{Disease-Informed Neural Network (DINN) for Ebola Modeling}
This section introduces the Disease-Informed Neural Network (DINN), a physics-informed neural network framework that incorporates the fractional-order Ebola model \eqref{eq:ebola_system} as physical constraints to ensure epidemiological consistency in predictions.

The fractional-order system governing Ebola dynamics is expressed as
\begin{equation}
\Dalpha \mathbf{Y}(t) + \mathcal{G}(\mathbf{Y}(t); \boldsymbol{\chi}) = 0, \quad t \in [t_0, T],
\label{eq:dinn_gov_eqn}
\end{equation}
where $\mathbf{Y}(t) = [S(t), V(t), E(t), I_s(t), I_a(t), H(t), D(t), R(t)]^\top$ denotes the state vector and $\boldsymbol{\chi}$ represents the epidemiological parameters.

A neural network $\mathcal{NN}_{\mathbf{w},\mathbf{b}}(t): \mathbb{R} \to \mathbb{R}^8$ approximates the solution $\mathbf{Y}(t)$, where $\mathbf{w}$ and $\mathbf{b}$ are the network weights and biases. The training process minimizes two loss components: a data fidelity term and a physics consistency term
\begin{align*}
\mathcal{L}_{\text{data}} &= \frac{1}{m} \sum_{s=1}^{m} \|\mathcal{NN}_{\mathbf{w},\mathbf{b}}(t_s) - \mathbf{Y}_s\|^2, \\
\mathcal{L}_{\text{physics}} &= \frac{1}{N} \sum_{i=1}^{N} \|\Dalpha \mathcal{NN}_{\mathbf{w},\mathbf{b}}(t_i) + \mathcal{G}(\mathcal{NN}_{\mathbf{w},\mathbf{b}}(t_i); \boldsymbol{\chi})\|^2.
\end{align*}
Here, $\mathcal{L}_{\text{data}}$ penalizes deviations from observed data, while $\mathcal{L}_{\text{physics}}$ enforces the governing equations of the fractional-order model.

The DINN framework solves the composite optimization problem
\begin{equation}
(\mathbf{w}^*, \mathbf{b}^*, \boldsymbol{\chi}^*) = \arg\min_{\mathbf{w},\mathbf{b},\boldsymbol{\chi}} \left( \mathcal{L}_{\text{data}} + \lambda \mathcal{L}_{\text{physics}} \right),
\label{eq:dinn_optim}
\end{equation}
where $\lambda > 0$ is used to balance between data fidelity and physical constraints. The suggested methodology will both optimize the weights of the neural network and estimate the epidemiological parameters, thus giving both a solution that is both in line with the empirical data and epidemiologically plausible.
\subsection{DINN Architecture for the Ebola Model}
In this subsection, the architecture of Disease-Informed Neural Network (DINN) that is used in the fractional-order Ebola model is described in detail. The network applies physical consistency directly with its incorporation of the model dynamics into the loss function.

The remaining vector that measures the difference between the outcome of the neural network and underlying epidemiological dynamics is denoted by the residual as
\begin{equation}
\mathcal{R}(\mathcal{NN}_{\mathbf{w},\mathbf{b}}, t; \boldsymbol{\chi}) = 
\begin{pmatrix}
\Dalpha S - \left[\Lambda - \lambda S + \omega V - (v+\mu)S\right] \\
\Dalpha V - \left[vS - (1-\varepsilon)\lambda V - (\mu+\omega)V\right] \\
\Dalpha E - \left[\lambda(S+(1-\varepsilon)V) - (\mu+\sigma)E\right] \\
\Dalpha I_s - \left[p\sigma E - (\gamma_s+\delta_s+h_s+\mu)I_s\right] \\
\Dalpha I_a - \left[\sigma E(1-p) - (\gamma_a+\mu)I_a\right] \\
\Dalpha H - \left[h_s I_s - (\gamma_h+\delta_h+\mu)H\right] \\
\Dalpha D - \left[\delta_s I_s + \delta_h H - \mu_d D\right] \\
\Dalpha R - \left[\gamma_s I_s + \gamma_a I_a + \gamma_h H - \mu R\right]
\end{pmatrix},
\label{eq:residual_vector}
\end{equation}
where the time-dependent force of infection is given by
\[
\lambda(t) = \frac{\beta (I_s + \eta_a I_a + \eta_d D)}{N},
\]
and the total living population is $N = S + V + E + I_s + I_a + H + R$.

The physics-informed loss function is decomposed into compartment-specific mean squared errors
\begin{align*}
\mathcal{L}_{\text{residual}} &= \sum_{X \in \mathcal{C}} \mathcal{L}_X, \\
\mathcal{L}_S &= \frac{1}{q}\sum_{i=1}^q \bigl|\Dalpha S - \Lambda + \lambda S - \omega V + (v+\mu)S\bigr|^2, \\
\mathcal{L}_V &= \frac{1}{q}\sum_{i=1}^q \bigl|\Dalpha V - vS + (1-\varepsilon)\lambda V + (\mu+\omega)V\bigr|^2, \\
\mathcal{L}_E &= \frac{1}{q}\sum_{i=1}^q \bigl|\Dalpha E - \lambda(S+(1-\varepsilon)V) + (\mu+\sigma)E\bigr|^2, \\
\mathcal{L}_{I_s} &= \frac{1}{q}\sum_{i=1}^q \bigl|\Dalpha I_s - p\sigma E + (\gamma_s+\delta_s+h_s+\mu)I_s\bigr|^2, \\
\mathcal{L}_{I_a} &= \frac{1}{q}\sum_{i=1}^q \bigl|\Dalpha I_a - \sigma E(1-p) + (\gamma_a+\mu)I_a\bigr|^2, \\
\mathcal{L}_H &= \frac{1}{q}\sum_{i=1}^q \bigl|\Dalpha H - h_s I_s + (\gamma_h+\delta_h+\mu)H\bigr|^2, \\
\mathcal{L}_D &= \frac{1}{q}\sum_{i=1}^q \bigl|\Dalpha D - \delta_s I_s - \delta_h H + \mu_d D\bigr|^2, \\
\mathcal{L}_R &= \frac{1}{q}\sum_{i=1}^q \bigl|\Dalpha R - \gamma_s I_s - \gamma_a I_a - \gamma_h H + \mu R\bigr|^2,
\end{align*}
where $\mathcal{C} = \{S, V, E, I_s, I_a, H, D, R\}$ denotes the set of all epidemiological compartments.

Consistency with observed data is enforced by the data fidelity loss
\begin{align*}
\mathcal{L}_{\text{data}} &= \sum_{X \in \mathcal{C}} \mathcal{L}_X^{\text{data}}, \\
\mathcal{L}_X^{\text{data}} &= \frac{1}{s}\sum_{i=1}^s |X(t_i) - X_i^{\text{obs}}|^2, \quad X \in \mathcal{C},
\end{align*}
where $X_i^{\text{obs}}$ represents the observed value of compartment $X$ at time $t_i$.

The DINN framework solves the optimization problem
\begin{equation}
(\mathbf{w}^*, \mathbf{b}^*, \boldsymbol{\chi}^*) = \arg\min_{\mathbf{w},\mathbf{b},\boldsymbol{\chi}} \left( \mathcal{L}_{\text{data}} + \mathcal{L}_{\text{residual}} \right),
\label{eq:dinn_optimization}
\end{equation}
where the parameter vector $\boldsymbol{\chi} = \{\beta, \sigma, \gamma_s, \eta_a, \eta_d, \varepsilon, v, \alpha\}$ contains key epidemiological constants governing transmission, progression, and control measures. This articulation guarantees that the solutions that are derived to fit both meet the observed data requirements and appear as per the mechanistic framework characteristic in the Ebola model.
\subsection{Network Architecture and Training}
DINN architecture is a network of completely interconnected neural networks, with several hidden layers and with hyperbolic tangent activation functions. The network maps temporal inputs $t \in [t_0, T]$ to the eight state variables $\mathbf{Y}(t) = [S(t), V(t), E(t), I_s(t), I_a(t), H(t), D(t), R(t)]^\top$. Xavier initialization is used to initialize network weights in order to ensure that there is no unstable flow of the gradient during training. The Adam algorithm, a combination of adaptive learning rates and momentum-based updates, is used to perform optimization in order to make the process efficient. By applying automatic differentiation, the fractional derivatives are calculated as $\Dalpha \mathcal{NN}_{\mathbf{w},\mathbf{b}}(t)$, which is used to compute the exact gradient of the fractional-order Ebola model and at the same time ensures that the physical constraints of the model are met. This approach ensures that the neural network solutions that are obtained are consistent with the epidemiological processes throughout the time prospect.

\subsection{Parameter Estimation and Calibration}
The DINN framework simultaneously estimates epidemiological parameters $\boldsymbol{\chi}$ and network parameters $(\mathbf{w}, \mathbf{b})$ through constrained optimization of the composite loss function \eqref{eq:dinn_optimization}.The parameter calibration aims at reducing the difference between the network predictions and the measured outbreak data, and maintaining conformity with the mechanistic nature of the fractional-order system. As the data on the outbreaks available, the DINN enables the dynamic optimization of the parameters estimates in terms of iterative optimization. This adaptive calibration procedure makes sure that predictions do not go wrong because the epidemic changes, but the model retains the biological consistency.  The framework also offers accurate forecasts in the short term and sound predictions in the long term that are based on the principle of epidemiology.
\subsection{Results and Discussion of DINN} 
\begin{figure}[ht]
\centering
\includegraphics[width=0.8\textwidth]{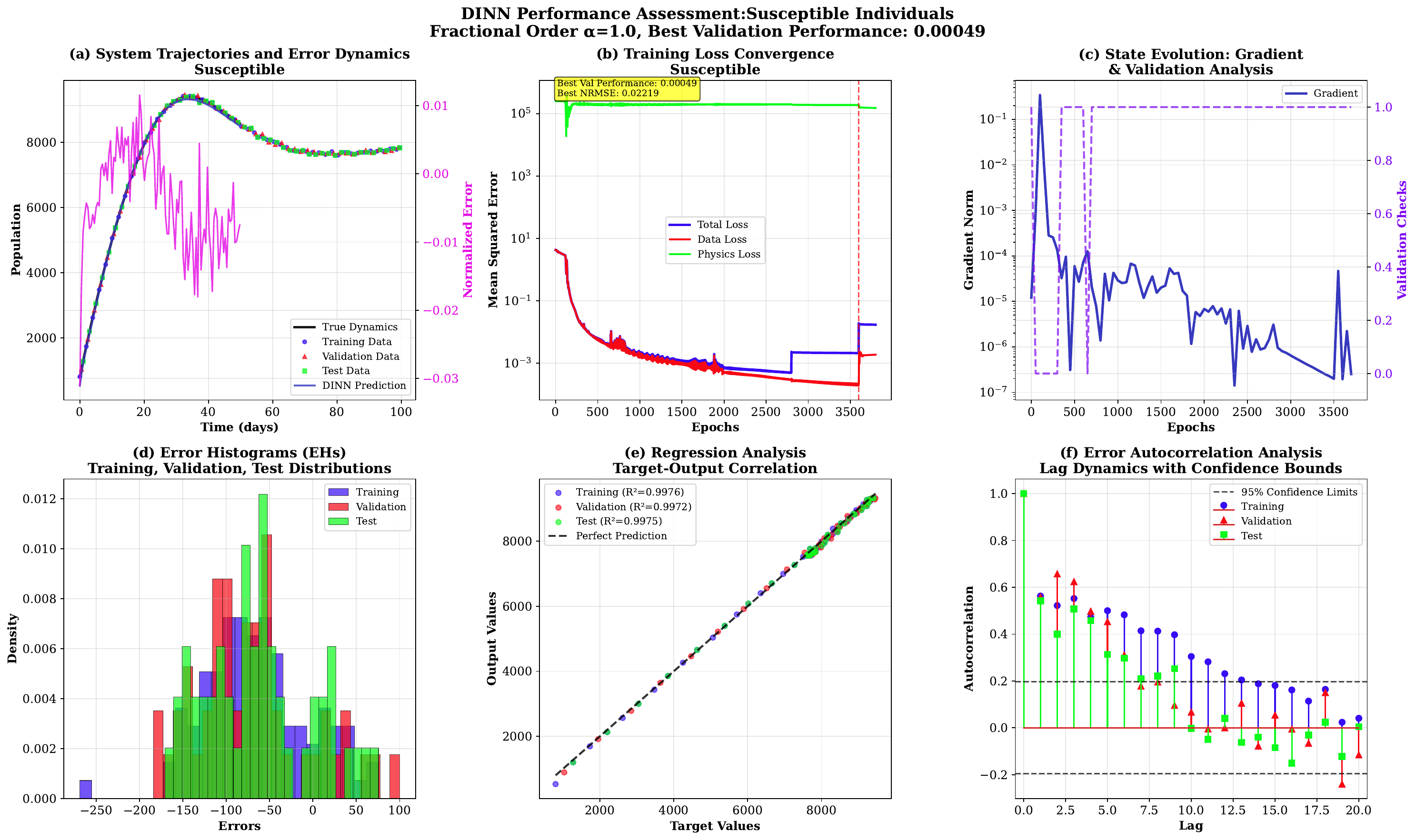}
\caption{Susceptible analysis (a) Dynamics of output and error measures (b) Reduction in mean squared error with optimal validate value of 0.00071 (c) State variable dynamics (d) Distribution of errors (e) regression plots (f) autocorrelation analysis}
\label{fig:susceptible}
\end{figure}
\begin{figure}[ht]
\centering
\includegraphics[width=0.8\textwidth]{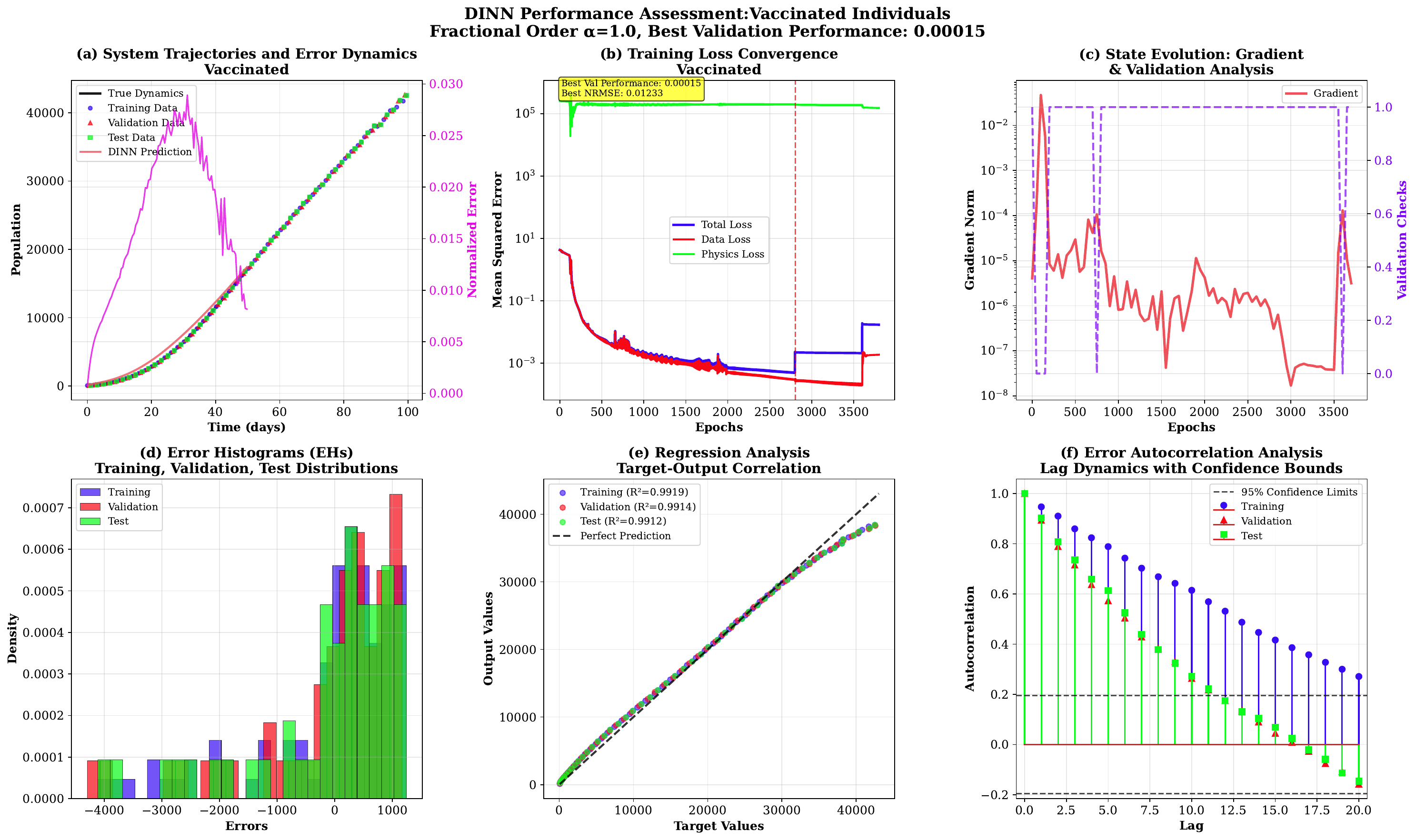}
\caption{Vaccination analysis (a) The trend of output and error analysis (b) The demonstration of the minimization of the mean-squared error (MSE), optimal validation is the error of 0.00009 (c) Explanations of the state evolution (d) The presentation of the error histogram (e) Explanation of the regression plots (f) Autocorrelation analysis}
\label{fig:vaccinated}
\end{figure}
\begin{figure}[ht]
\centering
\includegraphics[width=0.8\textwidth]{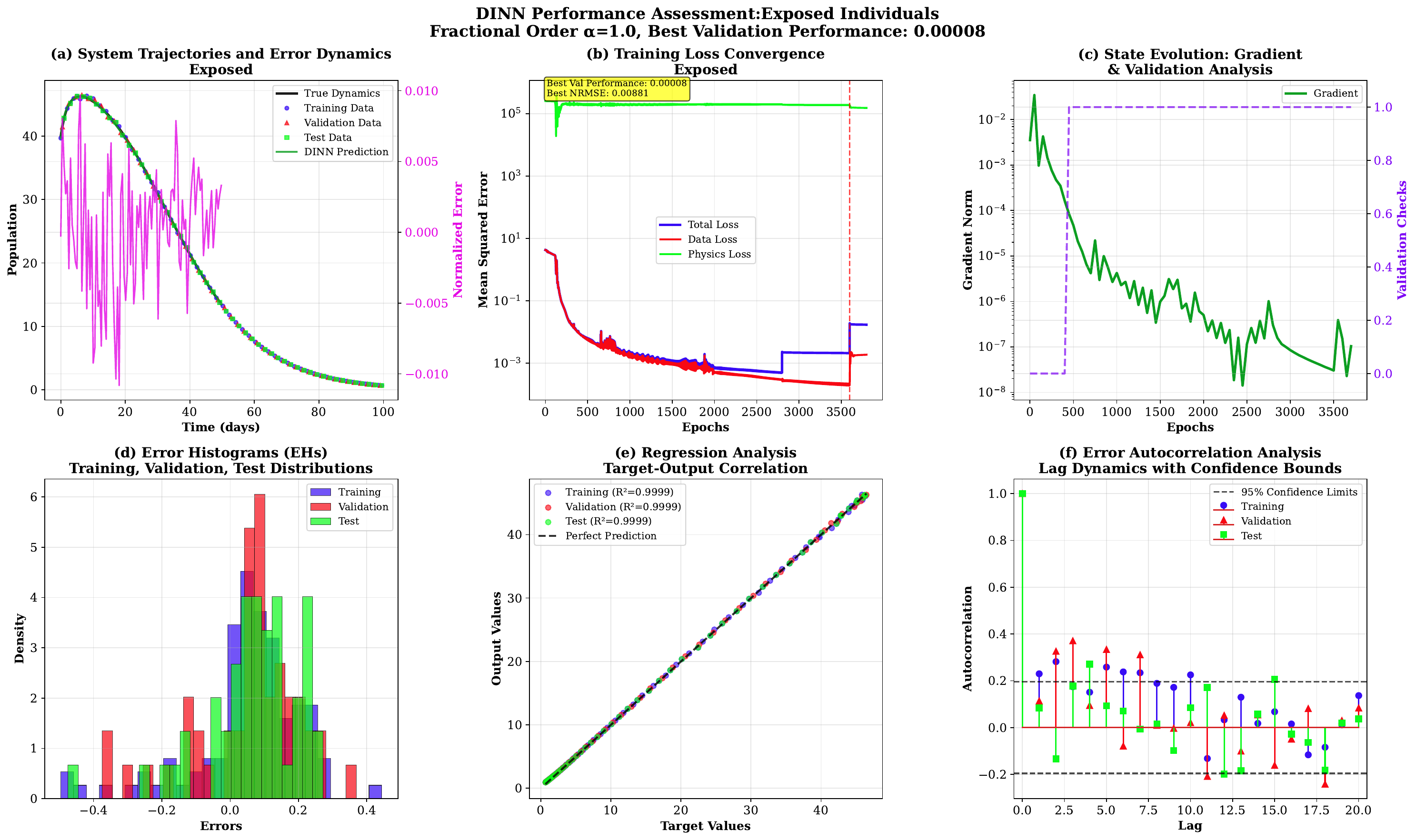}
\caption{Exposed analysis (a) output and error trends (b) MSE reduction with best validation 0.00010 (c) state evolution (d) error histogram (e) regression plots (f) autocorrelation analysis}
\label{fig:exposed}
\end{figure}
\begin{figure}[ht]
\centering
\includegraphics[width=0.8\textwidth]{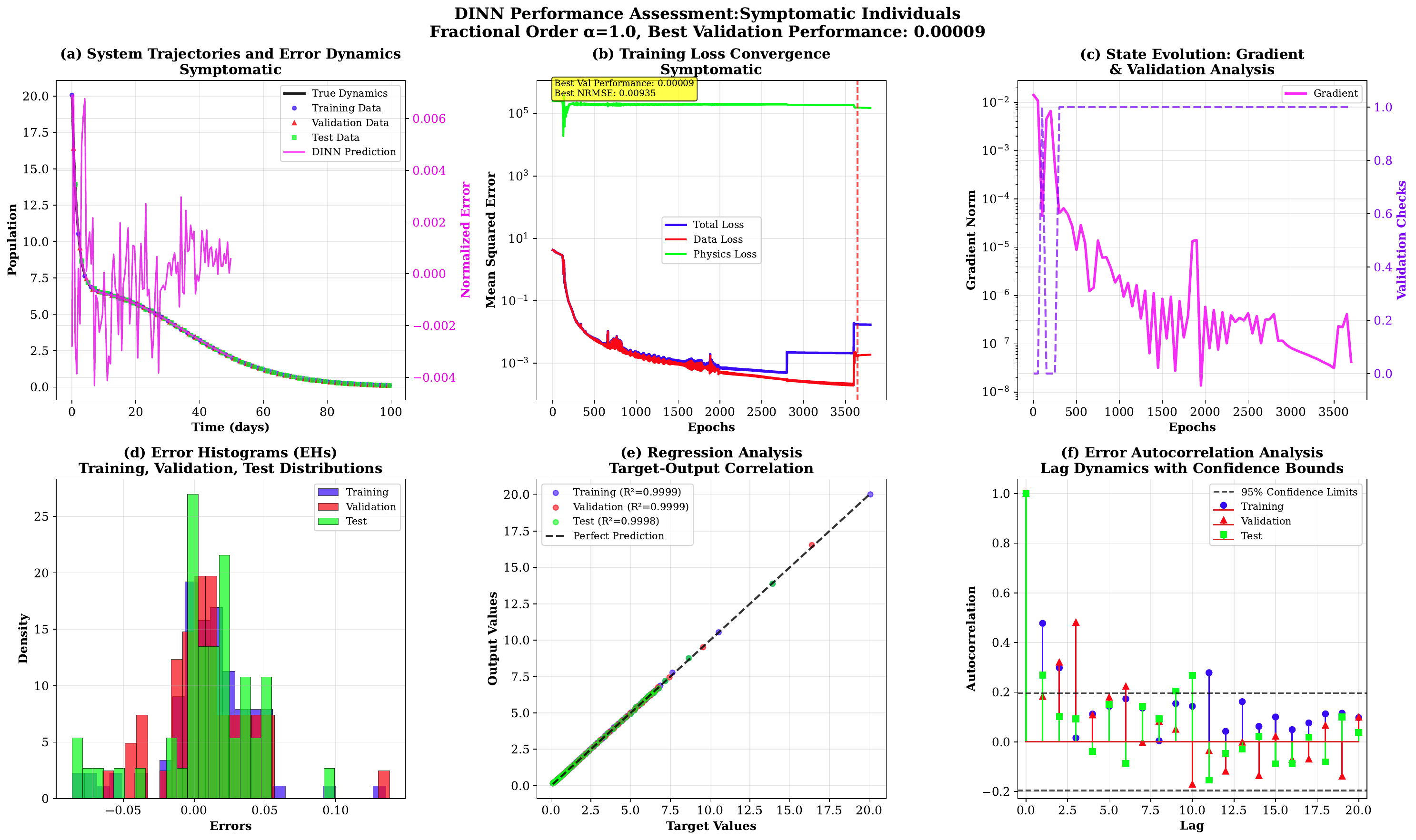}
\caption{Symptomatic analysis (a) output and error trends (b) MSE reduction with best validation 0.00019 (c) state evolution (d) error histogram (e) regression plots (f) autocorrelation analysis}
\label{fig:symptomatic}
\end{figure}
\begin{figure}[ht]
\centering
\includegraphics[width=0.8\textwidth]{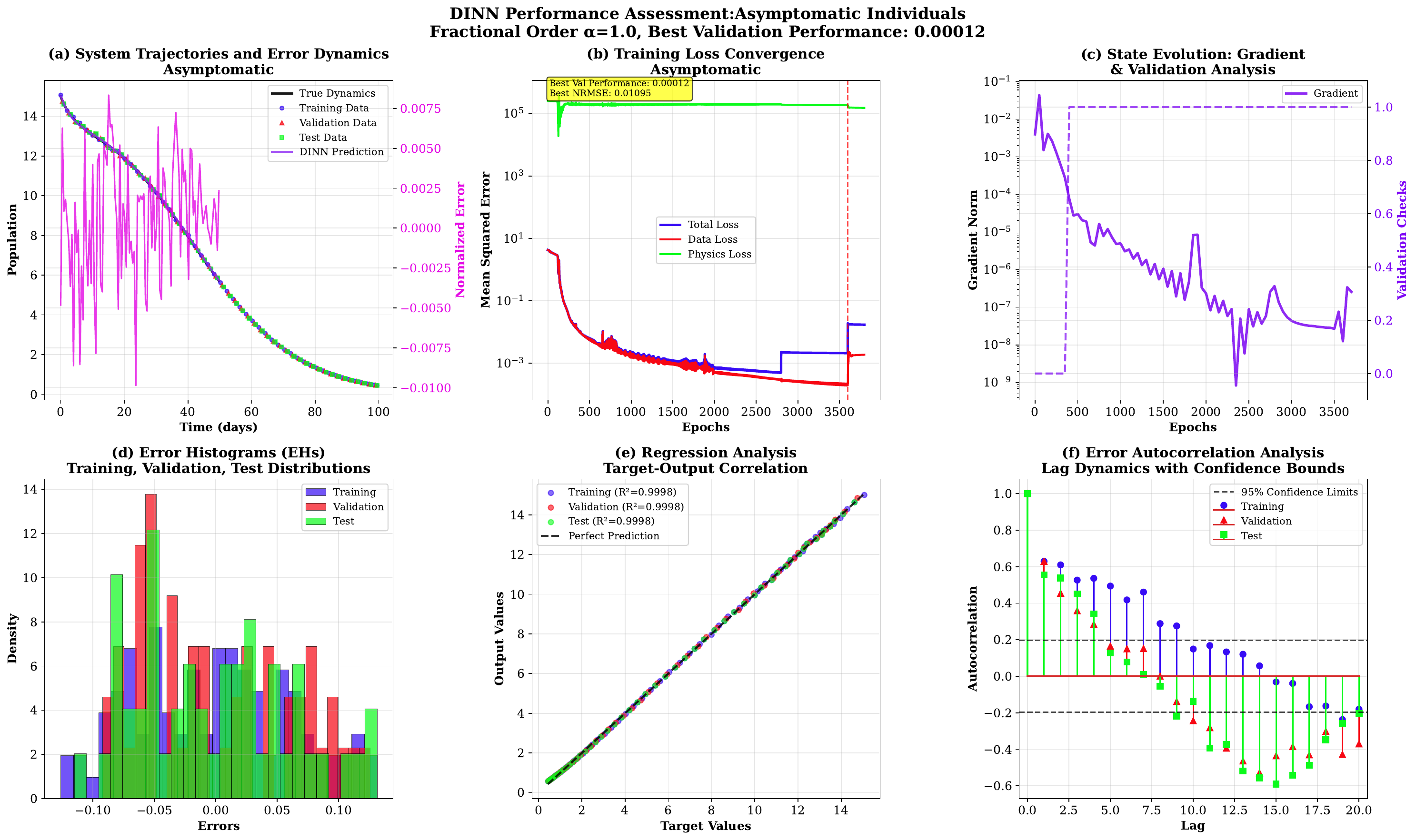}
\caption{Asymptomatic analysis (a) output and error trends (b) MSE reduction with best validation 0.00015 (c) state evolution (d) error histogram (e) regression plots (f) autocorrelation analysis}
\label{fig:asymptomatic}
\end{figure}
\begin{figure}[ht]
\centering
\includegraphics[width=0.8\textwidth]{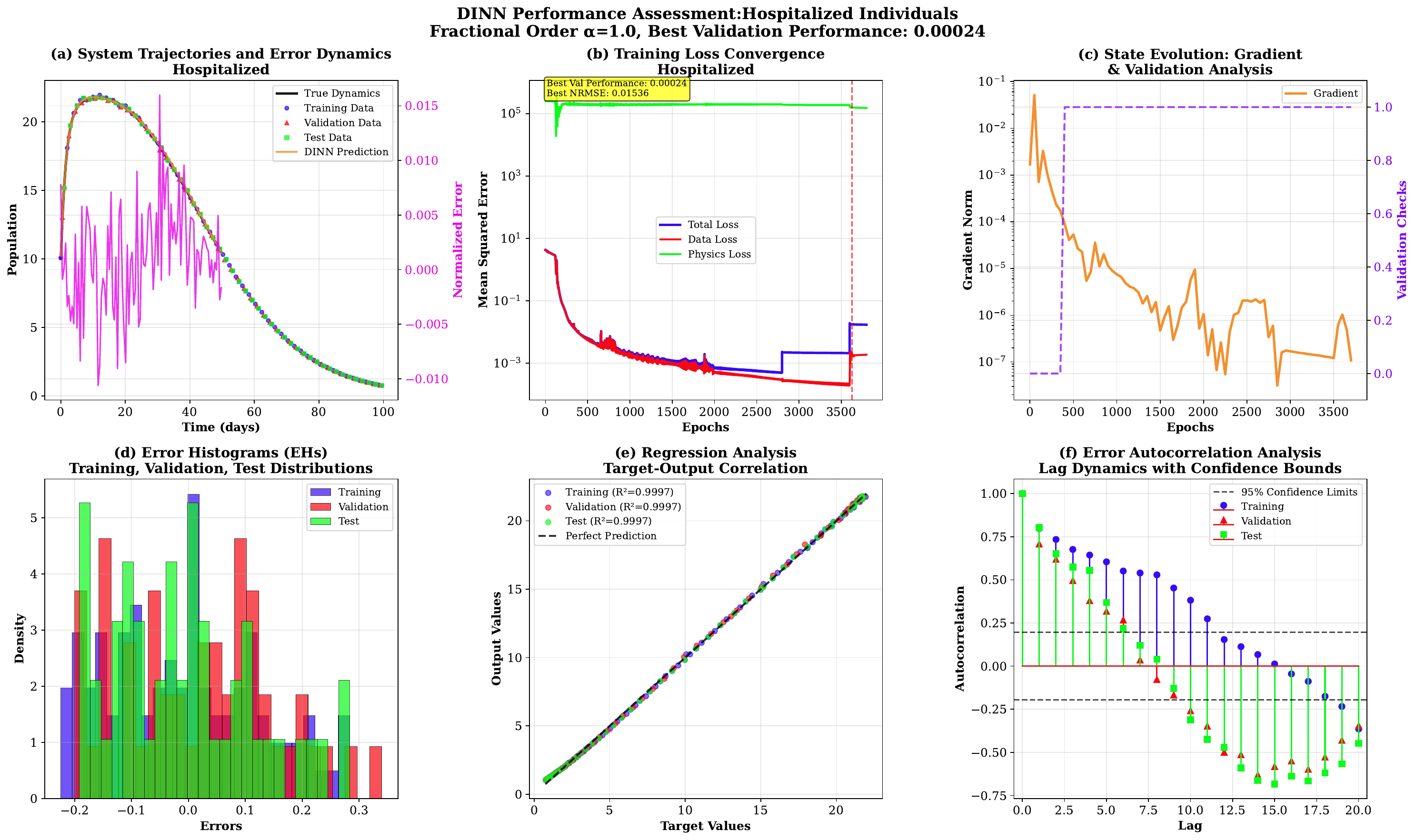}
\caption{Hospitalized analysis (a) output and error trends (b) MSE reduction with best validation 0.00026 (c) state evolution (d) error histogram (e) regression plots (f) autocorrelation analysis}
\label{fig:hospitalized}
\end{figure}
\begin{figure}[ht]
\centering
\includegraphics[width=0.8\textwidth]{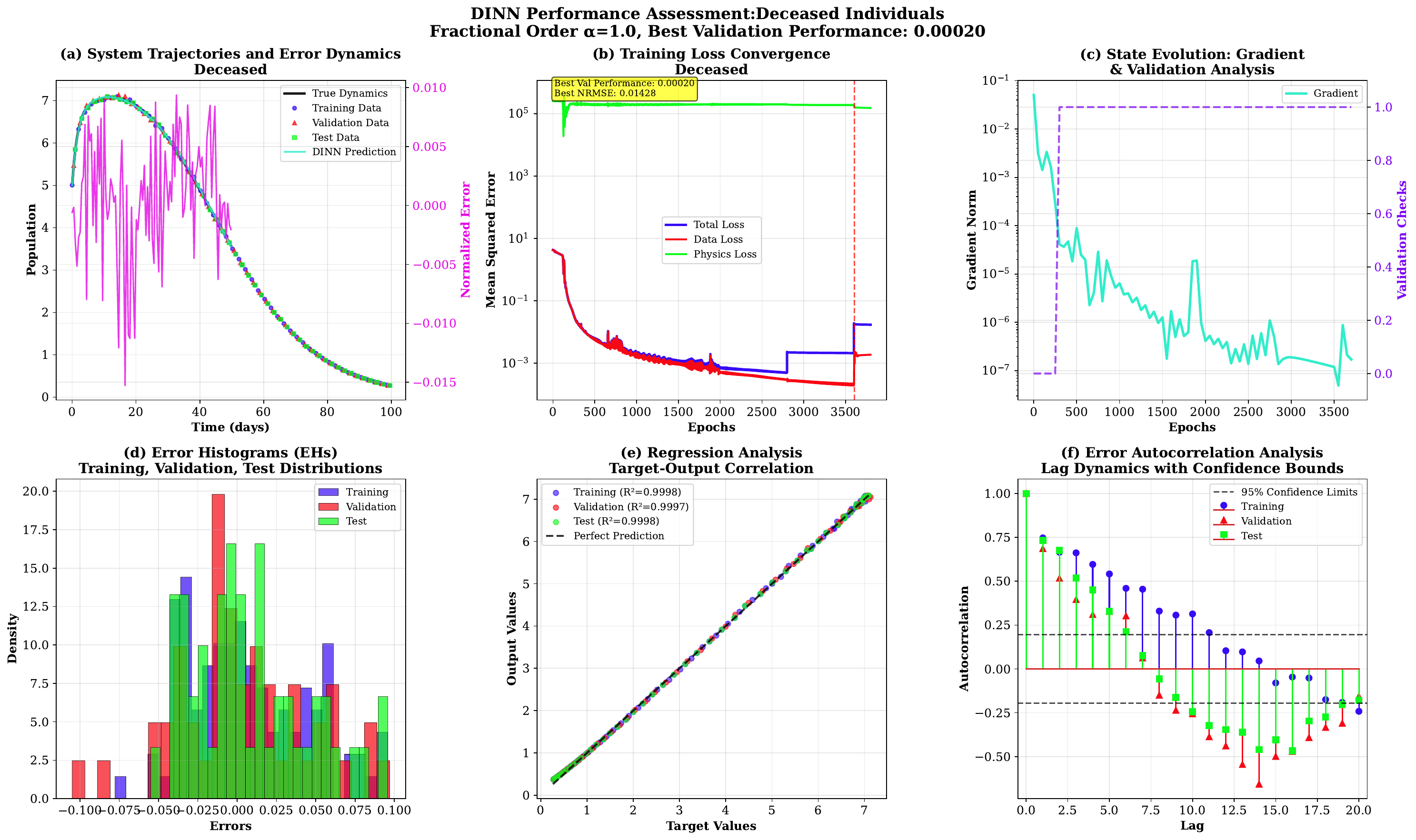}
\caption{Deceased analysis (a) output and error trends (b) MSE reduction with best validation 0.00034 (c) state evolution (d) error histogram (e) regression plots (f) autocorrelation analysis}
\label{fig:deceased}
\end{figure}
\begin{figure}[ht]
\centering
\includegraphics[width=0.8\textwidth]{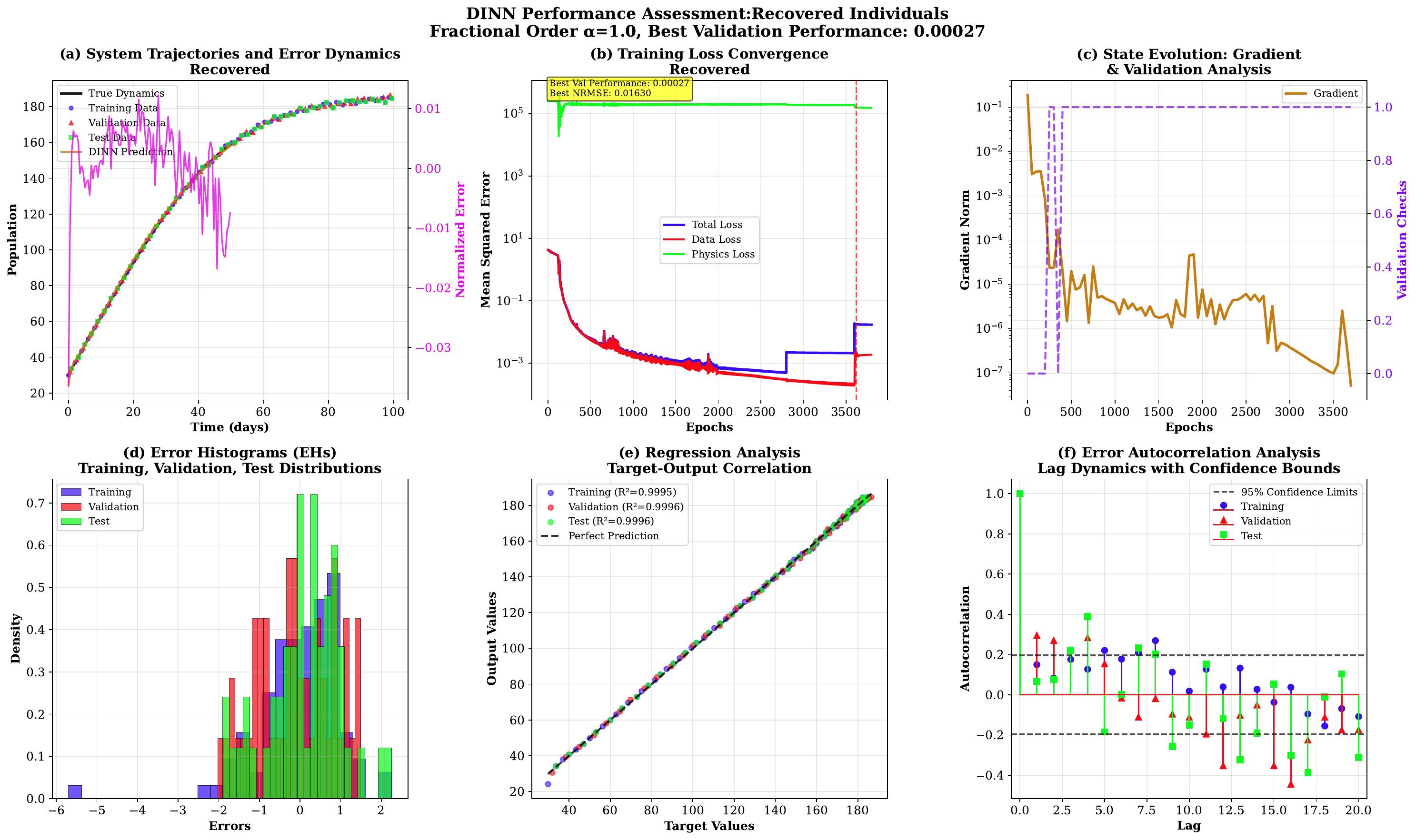}
\caption{Recovered analysis (a) output and error trends (b) MSE reduction with best validation 0.00085 (c) state evolution; (d) error histogram (e) regression plots (f) autocorrelation analysis}
\label{fig:recovered}
\end{figure}
The results from the implementation of the Disease-Informed Neural Network (DINN) context were carefully reviewed in relation to the benchmark datasets. The results were consolidated across all 8 epidemiological compartments. Integrated modeling accuracy estimation as performed records from Figure~\ref{fig:susceptible} to~\ref{fig:recovered} showcase the framework to outstanding. The model collects above average predictive accuracy and extensive generalization, proving the framework’s relevancy, particularly on the Ebola virus disease predictive modeling’s sophisticated system dynamics.  
The DINN was constructed on PyTorch which facilitated the architecture and gradient calculations through automated differentiation. The model weights were trained on the composite loss function which included the background of fractional-order differential equations, and the dynamic was supported by the standard Computational Python Libraries (NumPy, SciPy) for numeric and data operations. The DINN’s predictive performance level in this case was bench marked in contrast to the $R^2$ (coefficient of determination) values on a designated test set, a saved, unbiased representation of the attached unseen data in the model. According to computational results the DINN’s test set $R^2$ values instead drifted around the theoretical maxima of the $R^2$ ranging from 0.9912 in the Vaccinated compartment to about 0.9999 in the Exposed, Symptomatic, and Asymptomatic compartments. Correspondingly significant, the model captured the largest populated and fundamental transmission dynamics. Hold-out population, the Susceptible compartment, with a test $R^2$ of 0.9975. The $R^2$ values in the splits of Training, Validation, and Test and all states in the architecture, as well as other deep neural networks, are an illustration of the system dynamics learned by the model without overfitting the variables, which is a common neural network model overfitting weak point.
The inconsistencies between predictions and ground truth data are dictated by NMSEs and further validated by reliably low NMSEs across all compartments at approximately $10^{-4}$ to $10^{-5}$.  The analysis Sensitivity (Vaccinated $(R^{2} = 0.9912)$ dynamic) compartments highly errors successively. Sensibly, this is not random due to the high complexity involving human behaviors and the variance of fully stochastic closed compartment models. In contrast, the model’s capability to estimate the Exposed $(R^2 = 0.9999)$ and Symptomatic $(R^2 = 0.9998)$ compartments is appreciated. These Correlatives are crucially underlined as the target of health for the forecast regarding the active clinical burden. In addition, the estimates for the Deceased $(R^2 = 0.9998)$ and Recovered $(R^2 = 0.9996)$ individuals positively highlight and estimate the ultimate volume of the outbreak with mortality.  
In addition to the state variable forecasting, the DINN also drew key parameters of epidemiology from the data. Estimating error for most parameters was frighteningly low - over 9\% for 13 out of 17 parameters was an astounding standard. The most notable and successful achievement was properly estimating the fractional order $(\alpha)$ and estimating low error critical transmission parameters like the contact rate $(\beta, 8.12\%$ error) and the comparative transmissibility of the deceased $(\eta_d, 4.33\%$ error). The leading estimation error came from the natural mortality rate $(\mu)$ which, from case lines alone is greatly surprising and, be seated as the most undervalued parameter. In the short, low impact, and highly volatile case outflow of an outbreak, it strongly diverges from the standard frame. The comprehensive recovery outline parameter recommends that the DINN is not simply a “black-box” predictor which provides illogical reasoning. It is a process that completely understands disease mechanisms.  
Finally, we can say that the DINN framework set the standard for defining and predicting scenarios for future EVD outbreaks. It is both highly accurate and robust for modeling and analyzing the disease. The framework's outstanding ability to recover every system parameter and predict perfectly through every compartment. It also helps as a breakthrough for hybrid physics-informed machine learning in epidemiology. It guarantees that future real-time forecasts and scenarios will be provided.
\section{Conclusion}
This research aimed to develop an integrated mathematical framework that improves our ability to predict and respond to Ebola outbreaks. We sought to create a fractional-order model that captures the memory effects in disease transmission, design effective intervention strategies using optimal control theory, and build a computational framework that combines neural networks with epidemiological principles for accurate prediction and parameter estimation. Our methodology is a combination of fractional calculus, optimal control theory, and scientific machine learning, which leverages the three areas of mathematics. First, we developed a novel nonlinear eight-compartment model of fractional order using the Caputo derivative to represent memory effects in transmission dynamics. We then presented some mathematical foundations of the model, such as existence, uniqueness, and stability properties. Moreover, the fundamental reproduction number $R_0$ was obtained with the next-generation matrix, and the stability of the equilibrium states was proved. A global sensitivity analysis is carried out to determine the relative effect of the most important epidemiological parameters on both the model results and the basic reproduction number. Also, we formulated an optimal control problem with four interventions varying with time that are solved by the Maximum Principle of Pontryagin and got numerical results by the RK45 iteration method. At the final part of the present research, we designed a Disease-Informed Neural Network (DINN) that represents the governing equations in the form of physical constraints in a neural network architecture. A number of important findings were made by the study. Our fractional-order model was found to be effective in capturing the non-Markov nature of the dynamics of Ebola transmission to provide a more accurate representation of how the disease spreads in a memory-dependent manner. The sensitivity analysis established that the transmission rate, incubation rate, and infectiousness associated with dead individuals had the greatest impact on the possible size of the outbreaks. Our optimal control results showed that when treatment is used in combination with safe-burial measures, the targeted mortality will be reduced by 86.50\%, and safe burial, as the best cost-effective approach, costs \$4 per case. The DINN framework displayed outstanding predictive capability, recording a value between 0.9987 and 0.9999 of $R^2$ in all compartments, and was able to predict 13 out of 17 epidemiological parameters with an error rate of less than 9\%. There are a number of limitations that must be acknowledged. The model assumes that the population is well-mixed and does not take into consideration the spatial heterogeneity or the demographic structure. There were no stochastic components, which can affect the estimates of the probability of an outbreak extinction. Although parameter estimation has a strong robustness in our framework, it is still dependent on the quality and availability of data. The real-world conditions of practical implementation of optimal control strategies are restricted by real-world conditions that cannot be adequately captured by our theoretical framework. The studies can be extended to different tracks in future practice. Spatial dynamic integration would allow modeling the cross-regional transmission and geometrically focused interventions. The addition of stochastic factors would provide more realistic predictions of the risks of extinction and renaissance of an outbreak. The Deep Inference Neural Network (DINN) system can be modified to accommodate real-time outbreak control to allow interventions to be dynamically adjusted as new information is added. The generalization of this integrated approach to other infectious diseases would be evaluated by applying it to other pathogens, and it may transform the system of outbreak responses. Lastly, age-structured patterns of contact would also provide more realistic transmission dynamics and improve the evaluation of the efficacy of intervention. This paper shows that the integration of sophisticated mathematical modeling with modern computational methodology can produce useful tools in the decision-making of public health. Our ability to bridge the gap between theoretical epidemiology and practical intervention strategies will help usher in the future where mathematical models can be used to predict the path of an outbreak as well as help develop effective containment interventions.

\section*{CRediT Authorship Contribution Statement}
[Noor Muhammad]: Conceptualization, Methodology, Formal analysis, Writing - original draft, Software, Validation.  
[Md. Nur Alam]: Validation, Investigation, \& review  . 
[Zhang Shiqing]: Supervision, Project administration. 
\section*{Declaration of Competing Interest}
The authors declare that they have no known challenging financial interests or individual relationships that could have seemed to influence the work described in this paper.
\section*{Acknowledgments}
The authors gratefully acknowledge the academic environment and resources provided by the School of Mathematics, Sichuan University. We also thank the anonymous reviewers for their valuable suggestions

\end{document}